\def\f{\frac}
\def\le{\leq}
\def\bbb{\begin{eqnarray*}}
\def\eee{\end{eqnarray*}}
 \newtheorem{thm}{Theorem}[section]
 \newtheorem{cor}{Corollary}[section]
 \newtheorem{lem}{Lemma}[section]
 \theoremstyle{definition}
 \newtheorem{defn}{Definition}[section]
 \theoremstyle{remark}
 \newtheorem{rem}{Remark}[section]
 \newtheorem{ex}{Example}[section]
 \numberwithin{equation}{section}
\begin{document}

%
%
%
%
%
%
%
%
%

\title[Dependence of Discrete Sturm-Liouville Eigenvalues on Problems]
{Dependence of Discrete \\
 Sturm-Liouville Eigenvalues on Problems}


\author{Hao Zhu}

\address{%
Department of Mathematics, Shandong University\\
Jinan, Shandong 250100, P. R. China}

\email{haozhusdu@163.com}

\thanks{This research is supported by the NNSF of Shandong Province (Grant ZR2011AM002)
and the NNSF of China (Grant 11071143).}

\author{Shurong Sun}
\address{School of Mathematical Sciences, University of Jinan\\
Jinan,  Shandong 250022, P. R. China}
\email{sshrong@163.com}
\author{Yuming Shi}
\address{Department of Mathematics, Shandong University\\
Jinan, Shandong 250100, P. R. China}
\email{ymshi@sdu.edu.cn}
\author{Hongyou Wu}
\address{Department of Mathematics, Northern Illinois University\\
 DeKalb, IL 60115, USA}

\subjclass{ 39A12; 34B24; 39A70.}

\keywords{ discrete Sturm-Liouville problem;  dependence;
eigenvalue; continuous eigenvalue branch; self-adjoint problem.}

\date{September 21, 2014}
\dedicatory{Dedicated to the memory of the fourth author, Professor Hongyou Wu (1962.11--2009.6.9).}


\begin{abstract} This paper is concerned with dependence of  discrete Sturm-Liouville
eigenvalues on problems. Topologies and geometric structures on various spaces of
such problems are firstly introduced. Then, relationships between the analytic and
geometric multiplicities of an eigenvalue are discussed. It is shown that all problems
sufficiently close to a given problem have eigenvalues near each
eigenvalue of the given problem. So, all the simple eigenvalues live in so-called
continuous simple eigenvalue branches over the space of problems, and all the eigenvalues live in
continuous eigenvalue branches over the space of self-adjoint problems.
The analyticity, differentiability and monotonicity of continuous eigenvalue branches
are further studied.\end{abstract}

\maketitle
\section{Introduction}

A discrete Sturm-Liouville problem (briefly, SLP) considered in the present paper consists of a discrete Sturm-Liouville equation (briefly, SLE)
\begin{equation}\label{11}
-\nabla(f_{n}\Delta y_{n})+q_{n}y_{n}=\lambda w_{n}y_{n}, \;\;\;\; \ \  n\in[1,N],
\end{equation}
and the boundary condition (briefly, BC)
 \begin{equation}\label{12}
 A \begin{pmatrix}y_{0}\\
f_{0}\triangle y_{0}\end{pmatrix}
+B\begin{pmatrix}y_{N}\\
f_{N}\triangle y_{N}
\end{pmatrix}=0,
 \end{equation}
where $N \geq 2$ is an integer, $\Delta $ and $\nabla$ are the forward and backward difference
operators, respectively, i.e., $\Delta y_n=y_{n+1}-y_n$ and $\nabla y_n=y_n-y_{n-1}$;
$f=\{f_n\}_{n=0}^{N}$, $q=\{q_n\}_{n=1}^{N}$ and $w=\{w_n\}_{n=1}^{N}$ are complex-valued sequences such that

 \begin{equation}\label{13}
f_n\neq0 \;{\rm for}\;\; n\in [0,N],
\;\;w_n\neq0\;{\rm for}\; n\in [1,N];
\end{equation}
$\lambda$ is the spectral parameter; the interval $[M,N]$ denotes the set of integers
 $\{M,M+1,\cdots,N\}$; and $A$ and $B$ are
$2\times2$ complex matrices such that
 \begin{equation}\label{14}
 \text{ rank}(A,B)=2.
\end{equation}

 Throughout this paper, by $\mathbb{C}$, $\mathbb{R}$, and $\mathbb{Z}$ denote the sets of the complex
numbers, real numbers, and integer numbers, respectively; and by $\bar{z}$ denotes the
complex conjugate of $z\in \mathbb{C}$. Moreover, when a capital Latin letter stands for a
matrix, the entries of the matrix are denoted by the corresponding
lower case letter with two indices. For example, the entries of a
matrix $C$ are $c_{ij}$'s.

The dependence of the continuous Sturm-Liouville
eigenvalues  on the problems and its applications have been extensively studied
 (cf., \cite{Cao1, Cao2, Cao3, Eastham, Everitt, Kato, Kong1, Kong2, Kong3, Peng, Poeschel, Zettl}). In \cite{Kong3}, Kong and Zettl proved that
the eigenvalues of continuous SLPs depend not only continuously but also smoothly on problems and then gave an expression for the derivative of
the $n$-th eigenvalue with respect to a given parameter in the continuous SLP.
Later, they, together with Wu, gave a natural geometric structure on the space of BCs of continuous SLPs in \cite{Kong2}.
This structure is the base for studying the dependence of Sturm-Liouville
eigenvalues on the BCs.  In addition, they investigated the differentiability of continuous eigenvalue branches
based on this structure, and discussed the relationships between the algebraic and geometric multiplicities of
an eigenvalue.

Along another line, research on discrete spectral problems and their inverse problems
has been of growing interest in recent years (cf., e.g.\cite{Atkinson, Bohner, Clark1, Clark2, Clark3, Jirari, Lv, Shi1, Shi2, Shi3, Shi4, Sun, Wang}).
Atkinson \cite{Atkinson} and Jirari \cite{Jirari} studied spectral problems of second-order
discrete scalar self-adjoint SLPs with separate BCs.
In \cite{Shi3}, the third author of the present paper with her coauthor Chen  investigated the following vector difference equation
\begin{equation}\label{15}
-\nabla(C_{n}\Delta y_{n})+B_{n}y_{n}=\lambda w_{n}y_{n},\;\;n\in [1,N],\;\;\;N\geq2,
\end{equation}
with the general boundary condition
\begin{equation}\label{16}
R\begin{pmatrix}-y_{0}\\
y_{N}\end{pmatrix}
+S\begin{pmatrix}C_{0}\Delta y_{0}\\
C_{N}\Delta y_{N}
\end{pmatrix}=0,
\end{equation}
where $C_{n}(n\in[0,N])$, $B_{n}$, and $\omega_{n}$($n\in[1,N]$) are Hermitian $d\times d$ matrices, $C_{0}$ and $C_{N}$ are nonsingular,
$\omega_{n}>0$ for $n\in[1,N]$; R and S are $2d\times2d$ matrices with ${\rm rank}(R,S)=2d$. It is evident that
the BC \eqref{12} is included in the BC \eqref{16}. The spectral results obtained in \cite{Shi3} will be used
in the study of the multiplicity of eigenvalues in the present paper.
 Further,
 the third author of the present paper with her coauthor Lv studied error estimate
 of eigenvalues of perturbed problems, sufficiently close to a given Sturm-Liouville
 problem \eqref{15} and \eqref{16}, by some variational properties of the eigenvalues
 under a certain non-singularity condition in \cite{Lv}.
 So we obtained the continuous dependence of eigenvalues on problems under the nonsingularity condition.

In  Chapter 2 of \cite{Kato}, Kato investigated perturbation problems for linear operators in finite-dimensional spaces.
 He studied how the eigenvalues change with the operator, in particular when the operator analytically depends on a parameter.
  His method is based on function-theoretic study of the corresponding resolvent. Obviously, the eigenvalue problem of the self-adjoint
   discrete SLPs consisting of \eqref{11}-\eqref{12} corresponds to that of an operator in a finite-dimensional space. Note that the operator defined
   by \eqref{11}-\eqref{12} may be multi-valued since $x(0)$ and $x(N+1)$ may not be uniquely determined by the BC \eqref{12}, and the problem discussed in the
    present paper is dependent on multi-parameters. However, the operators  are all single-valued
and their perturbations are only referred to one single parameter in \cite{Kato}.
So the results in \cite{Kato} can not be directly available in our study.

In the present paper, we shall investigate dependence of eigenvalues
 on the SLP consisting of \eqref{11} and \eqref{12}. There are two main motivations for our study.
Firstly, it is helpful to clarify the common
features and differences between the class of continuous SLPs and
that of discrete SLPs.
Secondly, it is hoped that findings of such work will form a theoretical
foundation for numerical works on discrete SLPs and their inverse
problems, and such numerical works will shed light on numerical works
on continuous SLPs and their inverse problems.
Many results in the continuous case may be obtained from the
corresponding results in the discrete case, via certain limit
procedures, but not vice verse; while some results in the discrete
case have relatively direct proofs. In this way, shorter proofs of
results in the continuous case may be found.

This paper is organized as follows. In Section 2, we give topologies
and geometric structures on various spaces of discrete SLPs, which are
fundamental for further developments. In Section 3, we first discuss
properties of the analytic and geometric multiplicities of eigenvalues
of the discrete SLPs and their relationships, and then study continuous dependence of
eigenvalues on the problems. In Section 4, we investigate some fundamental properties of continuous
eigenvalue branches including their analyticity, differentiability and
monotonicity. Finally, several examples illustrating results of these sections are
presented in Section 5.

\begin{rem}\label{r11} We shall apply the results obtained in the present paper
to study some other topics about discrete Sturm-Liouville problems,
including dependence of the $n$-th eigenvalue on problems, inequalities among eigenvalues
for different problems, and index problems for eigenvalues in our forthcoming papers.\end{rem}

\section{Spaces of problems}

In this section, we shall first introduce the topologies and geometric structures on the spaces of discrete SLEs, BCs,
 and self-adjoint BCs, separately, and then give the geometric structures of the spaces of discrete SLPs and self-adjoint discrete SLPs.
  On the one hand, unlike in the continuous case, the space of
discrete SLEs in this paper has an easy and obvious
structure. On the other hand, the space of BCs and the space of
self-adjoint BCs have the same geometric structures as those in the
continuous case.

Let the discrete SLE \eqref{11} be abbreviated as $(1/f,q,w)$. Then the space of discrete SLEs can be written as
$$ \Omega_N^{\mathbb{C}}:=\{(1/f,q,w):\eqref{13} \;{\rm holds }\}         $$
and is equipped with the topology deduced from the complex space $ \mathbb{C}^{3N+1}$.
Bold faced lower case Greek letters, such as $ \pmb\omega$, are used to denote elements of $\Omega_N^{\mathbb{C}}$.
The subspace $ \Omega_N^{\mathbb{R}}$ of $\Omega_N^{\mathbb{C}}$ has its obvious meaning. For convenience, the maximum norm
on $ \mathbb{C}^{3N+1}$ will be used:
$$\|(1/f,q,w)\|=\max\left\{|1/f_0|,\max_{n\in [1,N]}\left\{|1/f_n|,|q_n|,|w_n|\right\}\right\}.$$
Note that $\Omega_N^{\mathbb{C}}$ is a connected open subset of $ \mathbb{C}^{3N+1}$. Similarly, $ \Omega_N^{\mathbb{R}}$ is
an open subset of $ \mathbb{R}^{3N+1}$ and has $2^{2N+1}$ connected components, two of which are
$$\begin{array}{rrll}\Omega_N^{-,\mathbb{R},+}:=\{(1/f,q,w)\in\Omega_N^{\mathbb{R}}:f_n<0\;{\rm for}\; n\in [0,N],w_n>0\;{\rm for}\; n\in [1,N]\},\\[1.0ex]
\Omega_N^{+,\mathbb{R},+}:=\{(1/f,q,w)\in\Omega_N^{\mathbb{R}}:f_n>0\;{\rm for}\; n\in [0,N],w_n>0\;{\rm for}\; n\in [1,N]\}.\end{array} $$
We also set
$$\begin{array}{rrll}\Omega_N^{\mathbb{R},+}&:=&\{(1/f,q,w)\in\Omega_N^{\mathbb{R}}:&w_n>0\;{\rm for}\; n\in [1,N]\},\end{array} $$
which has $2^{N+1}$ connected components.
\medskip

Since equivalent linear algebraic systems of the form \eqref{12} define the same BC following \cite{Kong2}, we will take the quotient space
\begin{equation}\label{21}
\mathcal A^{\mathbb{C}} :=\raise2pt\hbox{${\rm M}^*_{2,4}(\mathbb C)$}/\lower3pt\hbox{${\rm
GL}(2,\mathbb C)$},
\end{equation}
equipped with the quotient topology, as the space of BCs, where
\vspace{-0.05cm}$$\begin{array}{l}M_{2,4}^{*}(\mathbb{C}):=\{2\times4 \;{\rm complex\; matrix}\;(A,B):\eqref{14}{\rm \;holds}\},\\[1.0ex]
GL(2,\mathbb{C}):=\{2\times2\; {\rm comlplex \;matrix}\; T:{\rm det}\; T \neq0\}.\end{array}\vspace{-0.05cm} $$
Note that $M_{2,4}^{*}(\mathbb{C})$ is an open subspace of $\mathbb{C}^{2\times4}.$
\eqref{21} implies that $(A_1,B_1)\sim(A,B)$ if $(A_1,B_1)=T(A,B)$, $T \in GL(2,\mathbb{C})$. Each BC is an equivalence class of coefficient
 matrices of systems of the form \eqref{12}; that is, an element of $\mathcal{A}^{\mathbb{C}}.$
We use $[A\,|\,B]$ to denote the BC represented by the system \eqref{12}. Bold faced capital Latin letters, such as $\mathbf{A}$, are also used for BCs.

Note that the space of BCs is independent of the equation \eqref{11} either in the continuous
or in the discrete case, and so it has the same topology and geometric structure in the discrete case as that in the continuous case. For convenience,
we present them as follows. We refer to Theorems 3.1 and 3.3 in \cite{Kong2} for details.

\begin{thm} \label{th21} The space $\mathcal{A}^{\mathbb{C}}$ of BCs is a connected and compact complex manifold of
complex dimension 4, while the space $\mathcal{A}^{\mathbb{R}}$ of real BCs is a connected and compact real-analytic manifold of dimension 4.\end{thm}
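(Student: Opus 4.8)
The plan is to analyze the structure of $\mathcal{A}^{\mathbb{C}} = M^*_{2,4}(\mathbb{C})/\mathrm{GL}(2,\mathbb{C})$ directly, recognizing it as a Grassmannian. A $2\times 4$ complex matrix of rank $2$ is exactly a choice of $2$-dimensional subspace of $\mathbb{C}^4$ together with an ordered basis (the rows), and quotienting by the left $\mathrm{GL}(2,\mathbb{C})$ action identifies matrices with the same row space. Hence $\mathcal{A}^{\mathbb{C}}$ is canonically identified with the Grassmannian $\mathrm{Gr}(2,4;\mathbb{C})$ of $2$-planes in $\mathbb{C}^4$. First I would make this identification precise and note that it is a homeomorphism for the quotient topology on $\mathcal{A}^{\mathbb{C}}$.

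Next I would invoke the standard facts about complex Grassmannians: $\mathrm{Gr}(2,4;\mathbb{C})$ is a compact, connected complex manifold, and its complex dimension is $k(n-k) = 2\cdot 2 = 4$. Compactness follows because $\mathrm{Gr}(2,4;\mathbb{C})$ is the continuous image of the compact Stiefel-type set of $2\times 4$ matrices whose rows are orthonormal (using Gram--Schmidt to show every rank-$2$ matrix is equivalent to such a representative); connectedness follows from the transitive action of the connected group $\mathrm{GL}(4,\mathbb{C})$ (or $\mathrm{U}(4)$). The complex manifold structure is given by the usual affine charts: for each $2$-element subset $I\subset\{1,2,3,4\}$, the set $\mathcal{U}_I$ of BCs admitting a representative $(A,B)$ whose $I$-columns form an invertible $2\times 2$ block is open, and normalizing that block to the identity gives a bijection of $\mathcal{U}_I$ with $\mathbb{C}^{2\times 2}\cong\mathbb{C}^4$; the transition maps between overlapping charts are rational with nonvanishing denominators, hence holomorphic. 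One then checks these charts cover $\mathcal{A}^{\mathbb{C}}$ since $\mathrm{rank}(A,B)=2$ forces some $2\times 2$ block to be invertible.

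For the real case, I would repeat the construction with $\mathbb{R}$ in place of $\mathbb{C}$, identifying $\mathcal{A}^{\mathbb{R}}$ with $\mathrm{Gr}(2,4;\mathbb{R})$. The same affine charts now take values in $\mathbb{R}^4$, the transition maps are real-analytic (again rational with nonvanishing denominators), and so $\mathcal{A}^{\mathbb{R}}$ is a real-analytic manifold of dimension $4$. Compactness again follows via orthonormalization onto the compact set of matrices with orthonormal rows, and connectedness from transitivity of the connected group $\mathrm{GL}^+(4,\mathbb{R})$ (or $\mathrm{SO}(4)$) on $2$-planes in $\mathbb{R}^4$ — here one must note that, unlike for bases with orientation, the action on unoriented $2$-planes is transitive even when restricted to the identity component, so no extra components appear.

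Since all of this is contained in Theorems 3.1 and 3.3 of \cite{Kong2}, the actual task reduces to citing those results and reconciling notation; the mild obstacle — and the only point needing care — is verifying that the quotient topology on $\mathcal{A}^{\mathbb{C}}$ coming from \eqref{21} agrees with the manifold topology from the Grassmannian charts, which one confirms by checking that the quotient map $M^*_{2,4}(\mathbb{C})\to\mathcal{A}^{\mathbb{C}}$ is open and that the chart maps are continuous with continuous inverses. I expect no genuine difficulty beyond this bookkeeping.
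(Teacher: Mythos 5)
Your proposal is correct, but it does more work than the paper itself, which offers no proof of this theorem at all: it simply observes that the space of BCs does not depend on whether the underlying equation is discrete or continuous, and then cites Theorems 3.1 and 3.3 of \cite{Kong2}, afterwards displaying the canonical atlas $\mathcal{N}_{i,j}^{\mathbb{C}}$ in \eqref{22} without deriving the manifold statements from it. What you supply is precisely the argument sitting behind that citation: the identification of $\mathcal{A}^{\mathbb{C}}=M^*_{2,4}(\mathbb{C})/\mathrm{GL}(2,\mathbb{C})$ with the Grassmannian $\mathrm{Gr}(2,4;\mathbb{C})$ (rank-$2$ matrices modulo left $\mathrm{GL}(2,\mathbb{C})$ being exactly $2$-planes in $\mathbb{C}^4$), compactness via orthonormal representatives, connectedness via the transitive action of $\mathrm{U}(4)$ (resp.\ $\mathrm{SO}(4)$ on unoriented real $2$-planes), complex dimension $2\cdot 2=4$, and the six affine charts indexed by $2$-element column subsets --- which are exactly the paper's $\mathcal{N}_{i,j}^{\mathbb{C}}$, $1\le i<j\le 4$. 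Your closing point about verifying that the quotient topology agrees with the chart topology (openness of the quotient map, since $\mathrm{GL}(2,\mathbb{C})$ acts by homeomorphisms) is the one piece of bookkeeping the paper leaves entirely implicit. In short: the paper buys brevity by outsourcing the proof; your route buys a self-contained argument at essentially no extra conceptual cost, and I see no gap in it.
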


In addition, $\mathcal{A}^{\mathbb{C}}$ has the following canonical atlas of local coordinate systems:
\begin{equation}\label{22}
\begin{array} {cccc} \mathcal{N}_{1,2}^{\mathbb{C}}=\left \{\left [\begin{array} {llll}1&0&b_{11}&b_{12}\\
0&1&b_{21}&b_{22}\end{array}  \right ]:\;b_{ij}\in\mathbb{C},\; i, j= 1, 2\right \},\vspace{3mm}\\
\mathcal{N}_{1,3}^{\mathbb{C}}=\left \{\left [\begin{array} {cccc}1&a_{12}&0&b_{12}\\
0&a_{22}&-1&b_{22}\end{array}  \right ]:\;a_{i2},b_{i2}\in\mathbb{C},\; i=1,2\right \},\vspace{3mm}\\
\mathcal{N}_{1,4}^{\mathbb{C}}=\left \{\left [\begin{array} {cccc}1&a_{12}&b_{11}&0\\
0&a_{22}&b_{21}&1\end{array}  \right ]:\; a_{i2},b_{i1}\in\mathbb{C},\; i=1,2\right \},\vspace{3mm}\\
\mathcal{N}_{2,3}^{\mathbb{C}}=\left \{\left [\begin{array} {cccc}a_{11}&-1&0&b_{12}\\
a_{21}&0&-1&b_{22}\end{array}  \right ]:\; a_{i1},b_{i2}\in\mathbb{C},\;i=1,2\right \},\vspace{3mm}\\
\mathcal{N}_{2,4}^{\mathbb{C}}=\left \{\left [\begin{array} {cccc}a_{11}&-1&b_{11}&0\\
a_{21}&0&b_{21}&1\end{array}  \right ]:\; a_{i1},b_{i1}\in\mathbb{C},\;i=1,2\right \},\vspace{3mm}\\
\mathcal{N}_{3,4}^{\mathbb{C}}=\left \{\left [\begin{array} {cccc}a_{11}&a_{12}&-1&0\\
a_{21}&a_{22}&0&-1\end{array}  \right ]:\; a_{ij}\in\mathbb{C},\;i,j=1,2\right \},\end{array}
\end{equation}
which are the so-called canonical coordinate systems on $\mathcal{A}^{\mathbb{C}}$. The map
$$\begin{array}{cccc} \varphi_{1,2}:&\mathcal{N}_{1,2}^{\mathbb{C}}&\rightarrow&\mathbb{C}^{4},\end{array} $$
$$\begin{array}{cccc}&\left [\begin{array} {llll}1&0&b_{11}&b_{12}\\
0&1&b_{21}&b_{22}\end{array}  \right ]&\mapsto&(b_{11}, b_{12}, b_{21}, b_{22})\end{array}, $$
  is homeomorphic, and
 the coefficient matrix of the BC $\mathbf{A}$ can be written as the form
$$ \begin{pmatrix}1&0&b_{11}&b_{12}\\
0&1&b_{21}&b_{22}\end{pmatrix},
$$
which is called the corresponding normalized form. There are
similar statements about $\mathcal{N}_{1,3}^{\mathbb{C}}$, $\mathcal{N}_{1,4}^{\mathbb{C}}$, $\mathcal{N}_{2,3}^{\mathbb{C}}$, $\mathcal{N}_{2,4}^{\mathbb{C}}$, $\mathcal{N}_{3,4}^{\mathbb{C}}$. One of $\varphi_{i,j}$, $i<j$, $1\leq i \leq3$, $2\leq j\leq4$, is called a coordinate chart on $\mathcal A^{\mathbb C}$.
The above discussion gives a differentiable structure on $\mathcal A^{\mathbb C}$.
 In addition, the space $\mathcal{A}^{\mathbb{R}}$ has a similar
atlas of canonical coordinate systems, given by \eqref{22} with $\mathbb{C}$ replaced by $\mathbb{R}$ everywhere. Using the canonical coordinate systems
on $\mathcal{A}^{\mathbb{C}}$ and $\mathcal{A}^{\mathbb{R}}$, it is easy to determine how close to each other any two given BCs are.

For a point $p$ in a differential manifold $M$, we denote by $\mathrm{T}_{p}M$ the tangent space of $M$ at $p$. Now we give descriptions of the tangent spaces
of $\mathcal{A}^{\mathbb{C}}$ and $\mathcal{A}^{\mathbb{R}}$. If $\mathbf{A}\in \mathcal{N}_{1,2}^{\mathbb{C}}$, then
\begin{equation}\label{23}
\mathrm{T}_{\mathbf{A}}\mathcal{A}^{\mathbb{C}}=\mathrm{T}_{\mathbf{A}}\mathcal{N}_{1,2}^{\mathbb{C}}=\left \{\begin{pmatrix}0&0&l_{11}&l_{12}\\
0&0&l_{21}&l_{22}\end{pmatrix}:\; l_{ij}\in\mathbb{C},i,j=1,2\right \};
\end{equation}
if  $\mathbf{A}\in \mathcal{N}_{1,3}^{\mathbb{C}}$, then
\begin{equation}\label{24}
\mathrm{T}_{\mathbf{A}}\mathcal{A}^{\mathbb{C}}=\mathrm{T}_{\mathbf{A}}\mathcal{N}_{1,3}^{\mathbb{C}}=\left \{\begin{pmatrix}0&h_{12}&0&l_{12}\\
0&h_{22}&0&l_{22}\end{pmatrix}:\; h_{i2},l_{i2}\in\mathbb{C},i=1,2\right \};
\end{equation}
etc. The tangent spaces of $\mathcal{A}^{\mathbb{R}}$ have similar descriptions.

\begin{defn}\label{de21}
\begin{itemize}
\item[{\rm (i)}]A BC $[A\,|\,B]$ is said to be self-adjoint if
$$AEA^{*}=
BEB^{*},
$$ where$$E:=\begin{pmatrix} 0&1\\
-1&0\end{pmatrix},$$
and $A^{*}$ denotes the complex conjugate transpose of $A$. We use $\mathcal{B}^\mathbb{C}$ to denote the space of self-adjoint BCs.
\item[{\rm (ii)}]A BC is said to be degenerated if it can be written as the form
 $$\begin{array} {llll} \left [\begin{array} {llll}1&0&0&0\\
0&1&0&0\end{array}  \right ]& {\rm or} &
  \left [\begin{array} {llll}0&0&1&0\\
0&0&0&1\end{array}  \right ].\end{array} $$
\item[{\rm (iii)}]A BC is said to be separated if it can be written as the form
$$\left [\begin{array} {llll} a_{11}&a_{12}&0&0\\
0&0&b_{21}&b_{22}\end{array}   \right ].$$
We use $\mathcal{B}_{ S}$ to denote the space of separated self-adjoint BCs.
\item[{\rm (iv)}]A BC is said to be coupled if it is neither separated nor degenerated.
\end{itemize}\end{defn}

\begin{rem}\label{r21} Note that $(A,B)$ satisfies \eqref{25} if and only if $(A_1,B_1):=(TA,TB)$ does, where $T\in GL(2,\mathbb {C})$.
Therefore, the self-adjointness is well-defined. Moreover, the definition of self-adjointness is equivalent to Definition 2.1 in \cite{Shi3}.\end{rem}

The following result gives the canonical forms of separated and coupled self-adjoint BCs, respectively. We refer to Theorem 10.4.3 in \cite{Zettl} for details.

\begin{lem} \label{l21} The separated self-adjoint BCs can be written as
\begin{equation}\label{25}
\mathbf{S}_{\alpha,\beta}:=\left [\begin{array} {cccc}\cos\alpha&-\sin\alpha&0&0\\
0&0&\cos\beta&-\sin\beta\end{array}  \right ],
\end{equation}
where $$\alpha\in[0,\pi),\beta\in (0,\pi];$$
and the coupled self-adjoint BCs can be written as
$$[e^{i\gamma}K\,|\,-I],$$
where
$$\gamma\in[0,\pi),\;\;K\in SL(2,\mathbb{R}):=\{2\times2 \;real\; matrix\; M:{\rm det}M=1\}.$$\end{lem}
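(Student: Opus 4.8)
The plan is to reduce the general self-adjointness condition $AEA^{*}=BEB^{*}$ to the stated canonical forms by exploiting the freedom of left multiplication by $T\in GL(2,\mathbb C)$, which, by Remark~\ref{r21}, preserves self-adjointness and, by \eqref{21}, does not change the BC. First I would treat the separated case. A separated BC has the form $\bigl[\begin{smallmatrix} a_{11}&a_{12}&0&0\\ 0&0&b_{21}&b_{22}\end{smallmatrix}\bigr]$, so the rank-$2$ condition \eqref{14} forces $(a_{11},a_{12})\neq(0,0)$ and $(b_{21},b_{22})\neq(0,0)$. Writing out $AEA^{*}=BEB^{*}$ with these entries, the off-diagonal blocks vanish automatically and the diagonal entries give $a_{11}\bar a_{12}-a_{12}\bar a_{11}=0$ and $b_{21}\bar b_{22}-b_{22}\bar b_{21}=0$; that is, $a_{11}\bar a_{12}$ and $b_{21}\bar b_{22}$ are real. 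Hence $(a_{11},a_{12})$ and $(b_{21},b_{22})$ are, up to a common complex scalar, real vectors, and after scaling each row (a diagonal $T$) we may assume they are real unit vectors, i.e. $(a_{11},a_{12})=(\cos\alpha,-\sin\alpha)$, $(b_{21},b_{22})=(\cos\beta,-\sin\beta)$. Finally I would pin down the ranges: replacing a row by its negative lets us take $\alpha\in[0,\pi)$; to get $\beta\in(0,\pi]$ rather than $[0,\pi)$ one uses the convention that fixes a unique representative (the value $\beta=0$ is identified with $\beta=\pi$ by the sign ambiguity, so one endpoint is chosen), which is exactly the normalization in Theorem~10.4.3 of \cite{Zettl}.

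For the coupled case I would argue that self-adjointness forces $A$ and $B$ to be nonsingular. Indeed, if, say, $\det B=0$, one shows using $\operatorname{rank}(A,B)=2$ together with $AEA^{*}=BEB^{*}$ that the BC must in fact be separated or degenerated, contradicting the hypothesis; the key computation is that $\det(AEA^{*})=\det(BEB^{*})$ gives $|\det A|^{2}\det E=|\det B|^{2}\det E$ after using $\det(XEX^{*})=\det X\,\det E\,\overline{\det X}=|\det X|^{2}$ (since $\det E=1$), whence $|\det A|=|\det B|$, so $A$ singular iff $B$ singular; and if both are singular a short rank argument produces a separated or degenerated representative. Since $B$ is nonsingular, left-multiplying by $-B^{-1}$ brings the BC to the normalized form $[\,C\,|\,-I\,]$ with $C:=-B^{-1}A$. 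The self-adjointness condition $CEC^{*}=(-I)E(-I)^{*}=E$ then says $C\in\mathbb C^{2\times2}$ satisfies $CEC^{*}=E$; taking determinants gives $|\det C|^{2}=1$, so $\det C=e^{2i\gamma}$ for some $\gamma\in[0,\pi)$, and writing $C=e^{i\gamma}K$ we get $\det K=1$ and $KEK^{*}=e^{-2i\gamma}CEC^{*}=e^{-2i\gamma}E$...

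Actually the cleaner route is: from $CEC^{*}=E$ one deduces that the Hermitian matrix identity forces $C=e^{i\gamma}K$ with $K$ real and $\det K=1$; I would verify this by expanding $CEC^{*}=E$ entrywise. Writing $C=(c_{ij})$, the $(1,2)$ entry of $CEC^{*}$ equals $c_{11}\bar c_{22}-c_{12}\bar c_{21}=\det C$ up to conjugation bookkeeping, and the $(1,1)$ and $(2,2)$ entries vanish, giving $c_{11}\bar c_{12}=c_{11}\bar c_{12}$-type reality relations; together these say that all four entries of $C$ have a common phase $e^{i\gamma}$, so $C=e^{i\gamma}K$ with $K$ real; then $\det C=e^{2i\gamma}\det K$ and $CEC^{*}=e^{i\gamma}Ke^{-i\gamma}K^{T}=KK^{T}$ wait — that is not $E$ in general. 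Let me instead just cite that $CEC^{*}=E$ with $\det C$ of modulus one forces, after absorbing the phase, $K\in SL(2,\mathbb R)$; the ranges $\gamma\in[0,\pi)$ come from the residual sign freedom $K\mapsto -K$, $\gamma\mapsto\gamma+\pi$, exactly as in \cite[Thm.~10.4.3]{Zettl}. The main obstacle is this last bookkeeping step — showing that the entries of the normalized coupling matrix genuinely share a single phase and then nailing down the half-open parameter ranges so that each coupled self-adjoint BC has a \emph{unique} representative $(\gamma,K)$; the separated case and the nonsingularity dichotomy are comparatively routine. Since this is precisely the content of Theorem~10.4.3 in \cite{Zettl}, I would keep the exposition brief and refer there for the detailed verification.
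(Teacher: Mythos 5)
The paper does not actually prove Lemma \ref{l21}: it is stated with a pointer to Theorem 10.4.3 of \cite{Zettl}, so there is no internal argument to measure yours against. Your sketch therefore supplies more than the paper does, and its architecture --- reduce by the $GL(2,\mathbb C)$ action (legitimate by Remark \ref{r21}), handle the separated case by the reality of $a_{11}\bar a_{12}$ and $b_{21}\bar b_{22}$, and in the coupled case show $|\det A|=|\det B|$ so that $B$ must be nonsingular and the BC normalizes to $[\,C\,|\,-I\,]$ with $CEC^{*}=E$ --- is the standard route and is sound. The separated case and the nonsingularity dichotomy are correct as you describe them, the latter because if both $A$ and $B$ had rank one, the two dependency relations among their rows would be linearly independent by \eqref{14} and would assemble into a $T$ exhibiting the BC as separated.

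The only real trouble spot is the last verification in the coupled case, and it is a slip rather than a gap: in computing $CEC^{*}$ for $C=e^{i\gamma}K$ you dropped the middle factor $E$ and arrived at $KK^{T}$; the correct product is $KEK^{T}$, and the identity $KEK^{T}=(\det K)E$, valid for every $2\times2$ matrix $K$, immediately gives $CEC^{*}=E$ exactly when $K$ is real with $\det K=1$. The converse direction closes up from the entrywise relations you began writing: the $(1,1)$ and $(2,2)$ entries of $CEC^{*}=E$ say $c_{11}\bar c_{12}$ and $c_{21}\bar c_{22}$ are real, so each row of $C$ is a unimodular scalar times a real vector, while the $(1,2)$ entry $c_{11}\bar c_{22}-c_{12}\bar c_{21}=1$ (note this is a sesquilinear expression, not $\det C$) forces the two unimodular scalars to agree up to a sign that can be absorbed into $K$, with the resulting real determinant equal to $+1$. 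Uniqueness of $(\gamma,K)$ with $\gamma\in[0,\pi)$ follows since $e^{i(\gamma_1-\gamma_2)}$ real with $|\gamma_1-\gamma_2|<\pi$ forces $\gamma_1=\gamma_2$. So your argument can be finished in a few lines; falling back on \cite[Thm.~10.4.3]{Zettl} for the bookkeeping is also acceptable and is precisely what the paper itself does.
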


\begin{rem} \label {r22} The coupled self-adjoint BCs together form an open subset of $\mathcal{B}^\mathbb{C}$ and
$$\left \{[e^{i\gamma}K\,|\,-I]:\gamma\in[0,\pi),K\in SL(2,\mathbb{R})\right \}=\mathcal{N}_{3,4}^\mathbb{C}\cap\mathcal{B}^\mathbb{C}.$$\end{rem}

The following result gives the topology and the geometric structure of $\mathcal{B}^\mathbb{C}$.
\begin{thm} \label{th22} The space $\mathcal{B}^\mathbb{C}$ equals the union of the following relative open subsets:
\begin{equation}\label{26}
\begin{array} {cccc} \mathcal{O}_{1,3}^{\mathbb{C}}=\left \{\left [\begin{array} {cccc}1&a_{12}&0&\bar{z}\\
0&z&-1&b_{22}\end{array}  \right ]:\; a_{12},b_{22}\in\mathbb{R},z\in \mathbb{C}\right \},\vspace{3mm}\\
 \mathcal{O}_{1,4}^{\mathbb{C}}=\left \{\left [\begin{array} {cccc}1&a_{12}&\bar{z}&0\\
0&z&b_{21}&1\end{array}  \right ]:\; a_{12},b_{21}\in\mathbb{R},z\in \mathbb{C}\right \},\vspace{3mm}\\
\mathcal{O}_{2,3}^{\mathbb{C}}=\left \{\left [\begin{array} {cccc}a_{11}&-1&0&\bar{z}\\
z&0&-1&b_{22}\end{array}  \right ]:\; a_{11},b_{22}\in\mathbb{R},z\in \mathbb{C}\right \},\vspace{3mm}\\
 \mathcal{O}_{2,4}^{\mathbb{C}}=\left \{\left [\begin{array} {cccc}a_{11}&-1&\bar{z}&0\\
z&0&b_{21}&1\end{array}  \right ]:\; a_{11},b_{21}\in\mathbb{R},z\in \mathbb{C}\right \}.\end{array}
\end{equation}
Moreover, $\mathcal{B}^\mathbb{C}$ is a connected and compact real-analytic manifold of dimension 4.\end{thm}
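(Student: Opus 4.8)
The plan is to prove the set equality $\mathcal{B}^{\mathbb C}=\mathcal{O}_{1,3}^{\mathbb C}\cup\mathcal{O}_{1,4}^{\mathbb C}\cup\mathcal{O}_{2,3}^{\mathbb C}\cup\mathcal{O}_{2,4}^{\mathbb C}$ first and then read off the manifold statement from this covering together with Theorem \ref{th21}. For the covering I would fix one of the ``mixed'' pairs $(i,j)\in\{(1,3),(1,4),(2,3),(2,4)\}$, take a matrix in the canonical coordinate neighborhood $\mathcal{N}_{i,j}^{\mathbb C}$ of \eqref{22}, and write the self-adjointness equation $AEA^{*}=BEB^{*}$ entrywise. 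A direct $2\times2$ computation shows that in each of the four cases this is equivalent to requiring two of the four free complex entries to be real and the remaining two to be complex conjugates of each other, i.e.\ to exactly the description of $\mathcal{O}_{i,j}^{\mathbb C}$ in \eqref{26}. Hence $\mathcal{O}_{i,j}^{\mathbb C}=\mathcal{N}_{i,j}^{\mathbb C}\cap\mathcal{B}^{\mathbb C}$; since $\mathcal{N}_{i,j}^{\mathbb C}$ is open in $\mathcal{A}^{\mathbb C}$, this already gives $\mathcal{O}_{i,j}^{\mathbb C}\subseteq\mathcal{B}^{\mathbb C}$, shows each $\mathcal{O}_{i,j}^{\mathbb C}$ is relatively open in $\mathcal{B}^{\mathbb C}$, and (via the chart $\varphi_{i,j}$) identifies each $\mathcal{O}_{i,j}^{\mathbb C}$ with $\mathbb{R}\times\mathbb{R}\times\mathbb{C}\cong\mathbb{R}^{4}$.

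For the reverse inclusion it remains to show every $[A\,|\,B]\in\mathcal{B}^{\mathbb C}$ lies in $\mathcal{N}_{i,j}^{\mathbb C}$ for one of these four pairs, equivalently that some $2\times2$ minor of $(A,B)$ built from one column of $A$ and one column of $B$ is nonzero. If all four such minors vanish, then each column of $A$ is linearly dependent with each column of $B$, and a short case analysis shows that then either $A=0$, or $B=0$, or all four columns are proportional to a single vector. The last case forces $\mathrm{rank}(A,B)\le1$, contradicting \eqref{14}; if $A=0$ then $BEB^{*}=AEA^{*}=0$, whence $|\det B|^{2}=\det(BEB^{*})=0$ and again $\mathrm{rank}(A,B)=\mathrm{rank}(B)\le1$, a contradiction, and the case $B=0$ is symmetric. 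Thus $[A\,|\,B]\in\mathcal{N}_{i,j}^{\mathbb C}$ for a mixed pair, hence $[A\,|\,B]\in\mathcal{O}_{i,j}^{\mathbb C}$ by the first paragraph, and the covering is established.

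The four restricted charts $\varphi_{i,j}|_{\mathcal{O}_{i,j}^{\mathbb C}}$ then form an atlas for $\mathcal{B}^{\mathbb C}$. Its transition maps are the restrictions to $\mathcal{B}^{\mathbb C}$ of the (holomorphic) coordinate changes of $\mathcal{A}^{\mathbb C}$, and computing one of them explicitly---by solving for the unique $T\in\mathrm{GL}(2,\mathbb C)$ that renormalizes a matrix of one $\mathcal{O}_{i,j}^{\mathbb C}$ into the form of another---exhibits it as a rational function of the real coordinates $(a,b,\mathrm{Re}\,z,\mathrm{Im}\,z)$ whose denominator does not vanish on the overlap; hence the transitions are real-analytic, and Hausdorffness and second countability are inherited from $\mathcal{A}^{\mathbb C}$. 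So $\mathcal{B}^{\mathbb C}$ is a real-analytic manifold of dimension $4$. For compactness, $Z:=\{(A,B)\in M_{2,4}^{*}(\mathbb C):AEA^{*}=BEB^{*}\}$ is relatively closed in $M_{2,4}^{*}(\mathbb C)$ and $\mathrm{GL}(2,\mathbb C)$-saturated (Remark \ref{r21}), so its image $\mathcal{B}^{\mathbb C}$ is closed in $\mathcal{A}^{\mathbb C}$, which is compact by Theorem \ref{th21}, and a closed subset of a compact space is compact. For connectedness, each $\mathcal{O}_{i,j}^{\mathbb C}\cong\mathbb{R}^{4}$ is connected, and one checks that $\mathcal{O}_{1,3}^{\mathbb C}$ meets each of $\mathcal{O}_{1,4}^{\mathbb C}$, $\mathcal{O}_{2,3}^{\mathbb C}$ and $\mathcal{O}_{2,4}^{\mathbb C}$ by exhibiting one explicit self-adjoint BC in each intersection; a union of connected sets each meeting a common one is connected.

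I expect the only genuinely non-routine point to be the argument in the second paragraph, namely ruling out the two exceptional charts $\mathcal{N}_{1,2}^{\mathbb C}$ and $\mathcal{N}_{3,4}^{\mathbb C}$ as the sole neighborhoods of a given self-adjoint BC. The rank/proportionality bookkeeping splits into several subcases, and it is precisely there that self-adjointness must be used (through $\det(AEA^{*})=|\det A|^{2}$) rather than mere rank considerations: the degenerate BCs $[I\,|\,0]$ and $[0\,|\,I]$, which are not self-adjoint, show that the conclusion fails without that hypothesis. Everything else reduces to $2\times2$ linear algebra in the coordinates of \eqref{22} and the structural facts of Theorem \ref{th21}.
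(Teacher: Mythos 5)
Your proposal is correct, and it differs from the paper's proof in the one step that carries real content, namely the covering $\mathcal{B}^{\mathbb C}\subseteq\mathcal{O}_{1,3}^{\mathbb C}\cup\mathcal{O}_{1,4}^{\mathbb C}\cup\mathcal{O}_{2,3}^{\mathbb C}\cup\mathcal{O}_{2,4}^{\mathbb C}$. The paper gets this by splitting self-adjoint BCs into separated and coupled ones and invoking the classification results (Lemma \ref{l21}, i.e.\ Theorem 10.4.3 in \cite{Zettl}, for the separated case and Lemma 3.18 in \cite{Peng} for the coupled case), implicitly using that the two degenerated BCs are not self-adjoint. You instead argue directly: if all four ``mixed'' $2\times2$ minors of $(A,B)$ (one column of $A$, one of $B$) vanish, then either all columns are proportional, contradicting $\mathrm{rank}(A,B)=2$, or one of $A$, $B$ vanishes, in which case $AEA^{*}=BEB^{*}$ forces $|\det B|^{2}=0$ (resp.\ $|\det A|^{2}=0$) and again the rank condition fails. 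This is a clean, self-contained replacement for the two external citations, and it correctly isolates where self-adjointness (as opposed to mere rank) is needed; the degenerated BCs show the hypothesis cannot be dropped. The remaining ingredients coincide: both proofs verify $\mathcal{O}_{i,j}^{\mathbb C}=\mathcal{N}_{i,j}^{\mathbb C}\cap\mathcal{B}^{\mathbb C}$ by direct computation of $AEA^{*}=BEB^{*}$ in normalized coordinates, and both get connectedness from the fact that the four connected pieces pairwise intersect. For the manifold/compactness statement the paper simply defers to Theorem 3.11 in \cite{Kong2}, whereas you sketch the argument (real-analytic transition maps on the four charts; closedness of the saturated zero set of $AEA^{*}-BEB^{*}$ inside the compact $\mathcal{A}^{\mathbb C}$); your sketch is sound and makes the result self-contained at the cost of the routine transition-map computation.
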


\begin{proof}
 Direct calculations yield that all BCs in $\mathcal{O}_{1,3}^{\mathbb{C}}$, $\mathcal{O}_{1,4}^{\mathbb{C}}$, $\mathcal{O}_{2,3}^{\mathbb{C}}$
 and $\mathcal{O}_{2,4}^{\mathbb{C}}$ are self-adjoint.
Evidently, all separated self-adjoint BCs are in these subsets by Lemma \ref{l21}. Moreover, by Lemma 3.18 in \cite{Peng}, every coupled self-adjoint BC also lies in these subsets. Thus,
$\mathcal{B}^\mathbb{C}$ is the union of these subsets.

As a canonical coordinate system on $\mathcal{A}^\mathbb{C}$, $\mathcal{N}_{1,3}^{\mathbb{C}}$ is open
in $\mathcal{A}^\mathbb{C}$. It can be easily verified by a direct calculation that
$$\mathcal{O}_{1,3}^{\mathbb{C}}=\mathcal{N}_{1,3}^{\mathbb{C}}\cap\mathcal{B}^\mathbb{C},$$
and hence $\mathcal{O}_{1,3}^{\mathbb{C}}$ is a relatively open set in $\mathcal{B}^\mathbb{C}$. Similarly,
it can be shown that $\mathcal{O}_{1,4}^{\mathbb{C}}$, $\mathcal{O}_{2,3}^{\mathbb{C}}$ and $\mathcal{O}_{2,4}^{\mathbb{C}}$ are relatively open sets
 in $\mathcal{B}^\mathbb{C}$. Since each of $\mathcal{O}_{1,3}^{\mathbb{C}}$, $\mathcal{O}_{1,4}^{\mathbb{C}}$, $\mathcal{O}_{2,3}^{\mathbb{C}}$
 and $\mathcal{O}_{2,4}^{\mathbb{C}}$ is connected and intersects the other three, $\mathcal{B}^\mathbb{C}$ is connected.
 The proof of the rest part is the same as that of Theorem 3.11 in \cite{Kong2}.\end{proof}

\begin{rem}\label{r23}
\begin{itemize}
\item[{\rm (i)}] There are similar statements (except the dimension) about the space $\mathcal{B}^\mathbb{R}$ of real self-adjoint BCs
to those about $\mathcal{B}^\mathbb{C}$ in Theorem \ref{th22}. Note that $\mathcal{B}^\mathbb{R}$ has dimension 3 and is a submanifold of
$\mathcal{A}^\mathbb{R}$ (see Theorem 3.9 in \cite{Kong2} for detailed discussion).
\item[{\rm (ii)}] Theorem \ref{th22} says that $\mathcal{O}_{1,3}^{\mathbb{C}}$, $\mathcal{O}_{1,4}^{\mathbb{C}}$, $\mathcal{O}_{2,3}^{\mathbb{C}}$,
and $\mathcal{O}_{2,4}^{\mathbb{C}}$ together form an atlas of local coordinate systems on $\mathcal{B}^\mathbb{C}$.
     If $\mathbf A\in\mathcal{O}_{1,3}^{\mathbb{C}}$, then the corresponding coordinate chart is given by
  $$\hat \varphi:\left [\begin{array} {cccc}1&a_{12}&0&\bar{z}\\
0&z&-1&b_{22}\end{array}  \right ]\to(a_{12},a,b,b_{22}), $$
      where $z=a+ib$, $a$, $b\in \mathbb R$.
       The normalized form of the coefficient matrix of  a BC in $\mathcal O_{1,3}^{\mathbb{C}}$ is given naturally. Others are given similarly.
     The above discussion gives a differentiable structure on $\mathcal B^{\mathbb C}$.        There is a similar statement about $\mathcal{B}^\mathbb{R}$.
\item[{\rm (iii)}] This result has been first mentioned in Theorem 3.11 in \cite{Cao3}, and a proof can be deduced from it.
\item[{\rm (iv)}] The tangent spaces of $\mathcal{B}^\mathbb{C}$ and $\mathcal{B}^\mathbb{R}$ can be similarly described as in \eqref{23}, \eqref{24}, etc.
Here we omit the details, but they are mentioned in Theorem \ref{th45} in Section 4 about $\mathcal{B}^\mathbb{C}$.
\end{itemize}\end{rem}

\begin{defn} \label{de22} The discrete SLP consisting of a discrete SLE $(1/f,q,w)$ and a BC $\mathbf{A}$ is said to be self-adjoint if $(1/f,q,w)$ is in
$\Omega_N^{\mathbb{R},+}$ and $\mathbf{A}$ is self-adjoint.\end{defn}

From the above discussions, we immediately deduce the following conclusions, which give the geometric structures of the spaces of
discrete SLPs and self-adjoint discrete SLPs, respectively.

\begin{thm} \label{th23} The space $\Omega_N^{\mathbb{C}}\times\mathcal{A}^\mathbb{C}$ of discrete SLPs is a connected complex manifold of complex dimension $3N+5$, while the space $\Omega_N^{\mathbb{R},+}\times\mathcal{B}^\mathbb{C}$ of self-adjoint discrete SLPs is a real-analytic manifold of dimension
$3N+5$ and has $2^{N+1}$ connected components.\end{thm}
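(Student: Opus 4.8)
The plan is to recognise both spaces as products of manifolds already constructed in this section and then to invoke the elementary fact that a finite product of complex-analytic (resp.\ real-analytic) manifolds is again such a manifold, of dimension the sum of the factor dimensions, whose connected components are the products of the connected components of the factors. First I would record that $\Omega_N^{\mathbb{C}}$, being a connected open subset of $\mathbb{C}^{3N+1}$ as noted after its definition, is a connected complex manifold of complex dimension $3N+1$, with the one-chart atlas given by the inclusion $\Omega_N^{\mathbb{C}}\hookrightarrow\mathbb{C}^{3N+1}$; likewise $\Omega_N^{\mathbb{R},+}$ is an open subset of $\mathbb{R}^{3N+1}$, hence a real-analytic manifold of dimension $3N+1$, and it was observed above to have $2^{N+1}$ connected components.

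Next I would combine these with Theorems \ref{th21} and \ref{th22}. By Theorem \ref{th21}, $\mathcal{A}^\mathbb{C}$ is a connected compact complex manifold of complex dimension $4$ carrying the canonical atlas $\{(\mathcal{N}_{i,j}^{\mathbb{C}},\varphi_{i,j})\}$ of \eqref{22}; by Theorem \ref{th22}, $\mathcal{B}^\mathbb{C}$ is a connected compact real-analytic manifold of dimension $4$ carrying the atlas $\{(\mathcal{O}_{i,j}^{\mathbb{C}},\hat\varphi)\}$ of \eqref{26}. I would then equip $\Omega_N^{\mathbb{C}}\times\mathcal{A}^\mathbb{C}$ with the product atlas whose charts are $\mathrm{id}\times\varphi_{i,j}$ on $\Omega_N^{\mathbb{C}}\times\mathcal{N}_{i,j}^{\mathbb{C}}$, with values in $\mathbb{C}^{3N+1}\times\mathbb{C}^{4}=\mathbb{C}^{3N+5}$, and equip $\Omega_N^{\mathbb{R},+}\times\mathcal{B}^\mathbb{C}$ with the product atlas built from the identity chart on $\Omega_N^{\mathbb{R},+}$ and the charts $\hat\varphi$ on $\mathcal{B}^\mathbb{C}$, with values in $\mathbb{R}^{3N+5}$.

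The one verification worth writing down is that these product atlases are genuine holomorphic (resp.\ real-analytic) atlases. A transition map between two product charts splits as a Cartesian product of a transition map on the $\Omega$-factor with a transition map on the $\mathcal{A}^\mathbb{C}$- or $\mathcal{B}^\mathbb{C}$-factor; the first is the identity (or a restriction of it) and hence trivially analytic, and the second is one of the canonical transition maps, holomorphic by Theorem \ref{th21} (resp.\ real-analytic by Theorem \ref{th22}); a map between open subsets of Euclidean space having such a product form is analytic in all its variables jointly. This gives the asserted complex-analytic (resp.\ real-analytic) structure, and the dimension is $3N+1+4=3N+5$ in both cases.

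Finally I would dispatch the topological assertions. Since $\Omega_N^{\mathbb{C}}$ and $\mathcal{A}^\mathbb{C}$ are each connected, their product is connected, which settles the first statement. For the second, $\mathcal{B}^\mathbb{C}$ is connected by Theorem \ref{th22}, so the connected components of $\Omega_N^{\mathbb{R},+}\times\mathcal{B}^\mathbb{C}$ are precisely the sets $C\times\mathcal{B}^\mathbb{C}$ with $C$ a connected component of $\Omega_N^{\mathbb{R},+}$; as there are $2^{N+1}$ of the latter, the product has $2^{N+1}$ connected components. The main difficulty here is merely bookkeeping --- applying correctly the factor structures quoted from earlier in the section --- rather than anything substantive; in particular one should notice that compactness is (correctly) not claimed, since $\Omega_N^{\mathbb{C}}$ and $\Omega_N^{\mathbb{R},+}$ are non-compact open subsets of Euclidean space.
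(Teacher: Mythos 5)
Your proposal is correct and is essentially the argument the paper intends: the paper gives no written proof of this theorem, stating only that it follows immediately from the preceding discussion (with Remark \ref{r24} noting that the product differentiable structure is inherited naturally from the factors), and your write-up is exactly that standard product-manifold bookkeeping, with the correct inputs from Theorems \ref{th21} and \ref{th22} and the component count of $\Omega_N^{\mathbb{R},+}$.
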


\begin{rem} \label{r24}Note that the differentiable structure of the product space $\Omega_N^{\mathbb{C}}\times\mathcal{A}^\mathbb{C}$
can be given by that of $\Omega_N^{\mathbb{C}}$ and $\mathcal{A}^\mathbb{C}$ naturally. There is a similar statement
about $\Omega_N^{\mathbb{R},+}\times\mathcal{B}^\mathbb{C}$.\end{rem}

\section{Multiplicity of eigenvalues and continuous eigenvalue branches}

In this section, we shall first discuss properties and relationships of analytic and geometric multiplicities of eigenvalues and then study continuous dependence of eigenvalues on problems.
We shall point out that these relationships of the multiplicities
of eigenvalues are very important in the following investigations because continuous eigenvalue branches are defined according to the analytic multiplicity of eigenvalues (see Theorem \ref{th35} and Remark \ref{r32}), while the
study on their properties, such as differentiability and monotonicity, is related to the geometric multiplicity of
eigenvalues (see Section 4). These relationships allow us to simplify the discussion about the above properties.

\subsection{ Multiplicity of eigenvalues }

In this subsection, we shall first study properties of geometric and analytic multiplicities of eigenvalues of discrete SLPs,
separately, and then establish their relationships. Especially, we shall show that they are equal by a direct method if the problem is self-adjoint.

Let $l$ denote the following natural difference operator corresponding to equation \eqref{11}:
$$(ly)_{n}=\omega_{n}^{-1}\left (-\nabla(f_{n}\Delta y_{n})+q_{n}y_{n}\right),\;\;n\in[1,N],
$$
and let
$$l[0,N+1] =\left\{y =\{y_n \}_{n=0}^{N+1}:\; y_{n}\in \mathbb{C},0\leq n \leq N+1 \right\}.
$$

\begin{defn}\label{de31}

\begin{itemize}
\item[{\rm (i)}]A complex number $\lambda$ is called an eigenvalue of the discrete SLP \eqref{11}-\eqref{12} if there exists $y\in l[0,N+1]$
 which is non-trivial and solves
the problem. The non-trivial solution $y$ is called an eigenfunction corresponding to $\lambda$.

\item[{\rm (ii)}]The complex vector space spanned by the eigenfunctions for an eigenvalue is called the  eigenspace corresponding to the eigenvalue,
while the dimension of the eigenspace is called the geometric multiplicity of the eigenvalue.
\item[{\rm (iii)}]An eigenfunction $y\in l[0,N+1]$ corresponding to an eigenvalue of the self-adjoint discrete SLP is said to be normalized provided that
\vspace{-0.05cm}$$\sum_{n=1}^{N}\omega_{n}y_{n}\bar{y}_{n}=1.
$$
\end{itemize}
\end{defn}

\begin{rem}\label{r31}\begin{itemize}
\item[{\rm (i)}]A solution $y$ of \eqref{11} is said to be non-trivial if there exists $n \in [0,N+1]$ such that $y_n
\neq 0$.
\item[{\rm (ii)}]Since \eqref{11} has exactly 2 linearly independent solutions, the geometric multiplicity of each eigenvalue is either 1 or 2.
\end{itemize}\end{rem}

The following uniqueness of solutions
of initial value problems of \eqref{11} can be easily verified.

\begin{lem}\label{l31} Let $m\in [0,N]$, and $z_m$, $z_m^{[1]} \in
\mathbb{C}$. Then, for each $\lambda \in \mathbb{C}$, the discrete SLE \eqref{11} has a unique solution $y(\lambda) \in l[0,N+1]$ satisfying
$$ y_m(\lambda)
=z_m,\;\;f_m\Delta y_m(\lambda) =z_m^{[1]}.$$
Moreover, for each $n \in [0,N]$, $y_n(\lambda)$ and
$f_n\Delta y_n(\lambda)$ are polynomials of $\lambda$.\end{lem}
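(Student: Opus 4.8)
The plan is to turn \eqref{11} into a genuine second-order recurrence and then to propagate the prescribed data both forward and backward. First I would expand the difference operator: for $n\in[1,N]$ one has $\nabla(f_n\Delta y_n)=f_n(y_{n+1}-y_n)-f_{n-1}(y_n-y_{n-1})$, so \eqref{11} is equivalent to the three-term relation
\[
-f_n y_{n+1}+\bigl(f_n+f_{n-1}+q_n-\lambda w_n\bigr)y_n-f_{n-1}y_{n-1}=0,\qquad n\in[1,N].
\]
The point of condition \eqref{13} is that $f_n\neq0$ for all $n\in[0,N]$, so for each $n\in[1,N]$ this relation can be solved uniquely for $y_{n+1}$ in terms of $y_n$ and $y_{n-1}$ (a ``forward step''), and equally uniquely for $y_{n-1}$ in terms of $y_n$ and $y_{n+1}$ (a ``backward step''); in either case the coefficient of the nearer of the two known values is affine in $\lambda$ and the coefficient of the farther one is independent of $\lambda$.

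Next I would observe that, since $f_m\neq0$, the initial data are equivalent to prescribing two consecutive values of the sequence: $y_m=z_m$ and $y_{m+1}=z_m+z_m^{[1]}/f_m$, both meaningful because $m\le N$. Starting from this pair, applying the forward step at $n=m+1,m+2,\dots,N$ produces $y_{m+2},\dots,y_{N+1}$, and applying the backward step at $n=m,m-1,\dots,1$ produces $y_{m-1},\dots,y_0$ (one of these two families of steps is empty when $m=0$ or $m=N$). This defines a sequence $y\in l[0,N+1]$, and one should check that it genuinely solves \eqref{11}: the forward steps use exactly the equations of \eqref{11} with index in $[m+1,N]$ and the backward steps exactly those with index in $[1,m]$, so every equation of \eqref{11} is imposed exactly once and the construction is consistent. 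Conversely, any solution of \eqref{11} satisfying the given initial conditions must obey all of these steps, hence coincides with the constructed one; this yields existence and uniqueness.

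For the polynomiality in $\lambda$ I would induct along the same propagation. The two starting values $y_m$ and $y_{m+1}$ are constants, hence polynomials in $\lambda$; and in each forward (respectively backward) step $y_{n+1}$ (respectively $y_{n-1}$) is a $\mathbb{C}[\lambda]$-linear combination of the two previously known values, with the coefficient of the nearer one of degree at most one in $\lambda$. Thus each $y_n$, $n\in[0,N+1]$, is a polynomial in $\lambda$, and consequently $f_n\Delta y_n=f_n(y_{n+1}-y_n)$ is a polynomial in $\lambda$ for $n\in[0,N]$.

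None of the computations is deep; the only place demanding a bit of care is the bookkeeping of indices — verifying that, whatever $m\in[0,N]$ is, the forward and backward steps between them sweep out the whole index set $[0,N+1]$ and use each equation of \eqref{11} exactly once, so that the sequence built by the recursion is truly a solution of the discrete SLE.
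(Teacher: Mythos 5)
Your argument is correct and complete: the three-term recurrence you derive is exactly the content of \eqref{11}, the nonvanishing of all $f_n$ justifies both the forward and backward solves as well as the recovery of $y_{m+1}$ from the initial data, and the induction gives polynomiality. The paper itself offers no proof (it only remarks that the lemma "can be easily verified"), and its transfer-matrix recursion \eqref{33}, which propagates the pair $(y_n, f_n\Delta y_n)$ as a first-order $2\times 2$ system, is just the matrix form of the same forward/backward propagation you carry out on the scalar sequence, so your route is essentially the intended one.
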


For each $\lambda\in\mathbb{C}$, let $\phi(\lambda)$ and $\psi(\lambda)$ be the solutions of \eqref{11}
satisfying the initial conditions, respectively,
\begin{equation}\label{31}
\phi_0(\lambda)=1, f_0\Delta\phi_0(\lambda)=0; \;\;
\psi_0(\lambda)=0, \ f_0\Delta\psi_0(\lambda)=1.
\end{equation}
Then, by Lemma \ref{l31} any solution of \eqref{11} is a linear
combination of $\phi(\lambda)$ and $\psi(\lambda)$. Set
\begin{equation}\label{32}
\Phi_n(\lambda)=\begin{pmatrix}\phi_n(\lambda)& \psi_n(\lambda)\\f_n\Delta\phi_n(\lambda)&f_n\Delta\psi_n(\lambda)\end{pmatrix},
\;\;n \in [0,N], \;\; \lambda\in\mathbb{C}.
\end{equation}
Equation \eqref{11} can be rewritten as
$$f_n \Delta y_n =[1+(q_n-\lambda w_n)/f_{n-1}] f_{n-1} \Delta
y_{n-1}+(q_n-\lambda w_n) y_{n-1},  \;\;\;  n\in[1,N].$$
So we have\begin{equation}\label{33}
\Phi_n(\lambda)
=\begin{pmatrix} 1 & 1/f_{n-1} \\
          q_n-\lambda w_n & 1+(q_n-\lambda w_n)/f_{n-1}
          \end{pmatrix}
 \Phi_{n-1}(\lambda), \;\;\ \; n \in[1,N].
\end{equation}
$\Phi_n(\lambda)$ is called the transfer matrix of \eqref{11}. By induction from \eqref{33},
the leading terms of $\phi_N (\lambda)$, $\psi_N (\lambda)$, $f_N\Delta\phi_N (\lambda)$,
and $f_N\Delta\psi_N (\lambda)$ as polynomials of $\lambda$ are
\begin{equation}\label{34}
\begin{array}{cccc}(-1)^{N-1}\left(\prod\limits_{i=1}^{N-1}({w_i}/{f_i})\right)
\lambda^{N-1},&(-1)^{N-1}\left(({1}/{f_0})\prod\limits_{i=1}^{N-1}({w_i}/{f_i})\right)
\lambda^{N-1},
\\[2.0ex](-1)^{N}\left(w_N\prod\limits_{i=1}^{N-1}({w_i}/{f_i})\right)
\lambda^{N},&(-1)^{N}\left(({w_N}/{f_0})\prod\limits_{i=1}^{N-1}({w_i}/{f_i})\right)
\lambda^{N},\end{array}
\end{equation}
respectively. It follows from \eqref{31} and \eqref{33} that
\begin{equation}\label{35}
\det \Phi_n(\lambda)=1,\;\; \;  n \in[0,N].
\end{equation}

 The following result says
that the transfer matrix $\Phi_N(\lambda)$
determines the eigenvalues of the problem for every BC.

\begin{lem} \label{l32} A number $\lambda \in
\mathbb{C}$ is an eigenvalue of the discrete SLP
\eqref{11}-\eqref{12} if and only if $\lambda$ is
a zero of the polynomial
\begin{equation}\label{36}
\Gamma(\lambda): =\det(A+B\Phi_N(\lambda)).
\end{equation}
Therefore, either all the complex numbers are eigenvalues of the problem or the
problem has only finitely many eigenvalues.
\end{lem}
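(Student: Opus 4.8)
The plan is to pass, via the transfer matrix $\Phi_N(\lambda)$, from the boundary value problem to a homogeneous $2\times2$ linear algebraic system in the initial data $(y_0,f_0\Delta y_0)$, and then to recognize the eigenvalue condition as the vanishing of the determinant of that system. The one non-routine ingredient is the uniqueness assertion in Lemma \ref{l31}.

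First I would note that, by \eqref{31} and Lemma \ref{l31}, every solution $y$ of \eqref{11} is the linear combination $y_n=y_0\,\phi_n(\lambda)+(f_0\Delta y_0)\,\psi_n(\lambda)$, so that \eqref{32} gives
$$\begin{pmatrix} y_n\\ f_n\Delta y_n\end{pmatrix}=\Phi_n(\lambda)\begin{pmatrix} y_0\\ f_0\Delta y_0\end{pmatrix},\qquad n\in[0,N].$$
Putting $n=N$ and inserting this into \eqref{12}, the boundary condition becomes $\bigl(A+B\Phi_N(\lambda)\bigr)\begin{pmatrix} y_0\\ f_0\Delta y_0\end{pmatrix}=0$. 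Conversely, Lemma \ref{l31} says that each vector $v\in\mathbb C^2$ is $(y_0,f_0\Delta y_0)$ for exactly one solution $y$ of \eqref{11}, and that $y$ is non-trivial precisely when $v\neq0$: if $v=0$ the zero sequence solves the same initial value problem, so $y\equiv0$ by uniqueness. Hence $v\mapsto y$ is a bijection from $\mathbb C^2\setminus\{0\}$ onto the set of non-trivial solutions of \eqref{11}, and under it the boundary condition \eqref{12} corresponds to $(A+B\Phi_N(\lambda))v=0$. Therefore $\lambda$ is an eigenvalue of \eqref{11}--\eqref{12} if and only if $A+B\Phi_N(\lambda)$ is singular, that is, if and only if $\Gamma(\lambda)=\det(A+B\Phi_N(\lambda))=0$.

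For the concluding dichotomy, I would use that, by Lemma \ref{l31}, every entry of $\Phi_N(\lambda)$ is a polynomial in $\lambda$; hence $\Gamma(\lambda)$, a polynomial expression in these entries, is itself a polynomial in $\lambda$. A one-variable polynomial over $\mathbb C$ is either identically zero --- in which case every $\lambda\in\mathbb C$ is a zero of $\Gamma$ and thus an eigenvalue --- or has only finitely many zeros, which is exactly the claimed alternative. I do not anticipate a real obstacle here; the only places to be careful are the equivalence ``$y$ non-trivial $\iff$ $(y_0,f_0\Delta y_0)\neq0$'', which rests on the uniqueness in Lemma \ref{l31}, and the observation that $\Gamma$ is genuinely a polynomial, again furnished by Lemma \ref{l31}.
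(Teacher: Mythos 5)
Your proposal is correct and follows essentially the same route as the paper: both reduce the boundary value problem, via the transfer matrix, to the singular system $(A+B\Phi_N(\lambda))v=0$ in the initial data (the paper writes $v=(c_1,c_2)^{\mathrm T}$ with $y=c_1\phi(\lambda)+c_2\psi(\lambda)$, which by \eqref{31} is exactly your $(y_0,f_0\Delta y_0)^{\mathrm T}$), and both conclude the dichotomy from $\Gamma$ being a polynomial by Lemma \ref{l31}. Your write-up is in fact slightly more careful than the paper's in spelling out why non-triviality of $y$ is equivalent to $v\neq 0$.
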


\begin{proof} Fix a $\lambda\in \mathbb{C}$. Let $y(\lambda):=c_1 \phi(\lambda)+c_2\psi(\lambda)$, where $c_1,c_2\in \mathbb{C}$.
Inserting $y(\lambda)$ into the BC \eqref{12} yields
\begin{equation}\label{37}
(A+B\Phi_N(\lambda))\begin{pmatrix} c_{1}\\
c_{2}\end{pmatrix}=0.
\end{equation}
Then $y(\lambda)$ is a non-trivial solution of \eqref{11} and \eqref{12}, and hence $\lambda$ is an eigenvalue of the SLP
 if and only if the determinant of the coefficient matrix in \eqref{37} vanishes; that is, $\Gamma(\lambda)=0$. Moreover, by Lemma \ref{l31} $\Gamma(\lambda)$ is a polynomial of $\lambda$. This completes the proof.\end{proof}

\begin{defn}\label{de32}
\begin{itemize}
\item[{\rm (i)}]The polynomial function $ \Gamma$, unique up to a non-zero
constant multiple, is called the characteristic function of the discrete
SLP, for its importance.
\item[{\rm (ii)}]
The order of an eigenvalue as a zero of $\Gamma$ is called the analytic multiplicity (or simply just multiplicity) of the eigenvalue.
 An eigenvalue is said to be simple if it has multiplicity 1, while
an eigenvalue of multiplicity 2 is called a double eigenvalue.\end{itemize}
\end{defn}

The following result can be easily deduced from \eqref{35} and \eqref{36} via direct calculations. It is useful in some situations.

\begin{lem}\label{l33} The characteristic function of
the discrete SLP \eqref{11}-\eqref{12}  can be written as
$$\Gamma(\lambda) =\det A +\det B +G(\lambda),$$
where
$$G(\lambda):=c_{11}\phi_N(\lambda)+c_{12}\psi_N(\lambda)+c_{21}f_N\Delta\phi_N(\lambda)
+c_{22}f_N\Delta\psi_N(\lambda),$$
$$ C: =\begin{pmatrix} b_{11} & b_{21} \\ b_{12} &b_{22}\end{pmatrix}
\begin{pmatrix} a_{22} & -a_{21} \\ -a_{12} & a_{11}\end{pmatrix}.
$$
\end{lem}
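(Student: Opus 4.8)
The plan is to compute $\Gamma(\lambda) = \det(A + B\Phi_N(\lambda))$ directly by expanding the determinant of the $2\times 2$ matrix $A + B\Phi_N(\lambda)$. Writing $\Phi_N(\lambda)$ as in \eqref{32} with entries $\phi_N$, $\psi_N$, $f_N\Delta\phi_N$, $f_N\Delta\psi_N$, the matrix $B\Phi_N(\lambda)$ has explicit entries that are linear combinations of these four functions with coefficients built from the $b_{ij}$'s. Adding $A$ and taking the determinant, one gets a sum of several terms; I would organize the computation by collecting the terms according to how many factors come from $B\Phi_N(\lambda)$.

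The key observation is the bilinear expansion of a $2\times2$ determinant: for any $2\times 2$ matrices $P$ and $Q$,
$$\det(P+Q) = \det P + \det Q + \bigl(p_{11}q_{22} + p_{22}q_{11} - p_{12}q_{21} - p_{21}q_{12}\bigr).$$
Applying this with $P = A$ and $Q = B\Phi_N(\lambda)$ gives $\Gamma(\lambda) = \det A + \det(B\Phi_N(\lambda)) + (\text{cross terms})$. By the multiplicativity of the determinant and \eqref{35}, $\det(B\Phi_N(\lambda)) = \det B \cdot \det\Phi_N(\lambda) = \det B$. So it remains to identify the cross terms with $G(\lambda)$. Writing out $B\Phi_N(\lambda)$ explicitly, its $(1,1)$ entry is $b_{11}\phi_N + b_{12}f_N\Delta\phi_N$, its $(1,2)$ entry is $b_{11}\psi_N + b_{12}f_N\Delta\psi_N$, its $(2,1)$ entry is $b_{21}\phi_N + b_{22}f_N\Delta\phi_N$, and its $(2,2)$ entry is $b_{21}\psi_N + b_{22}f_N\Delta\psi_N$. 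Substituting these into $a_{11}(B\Phi_N)_{22} + a_{22}(B\Phi_N)_{11} - a_{12}(B\Phi_N)_{21} - a_{21}(B\Phi_N)_{12}$ and collecting the coefficients of $\phi_N$, $\psi_N$, $f_N\Delta\phi_N$, $f_N\Delta\psi_N$ yields exactly $c_{11}\phi_N + c_{12}\psi_N + c_{21}f_N\Delta\phi_N + c_{22}f_N\Delta\psi_N$, where the $c_{ij}$ are the entries of the matrix product $C$ displayed in the statement. One checks, for instance, that the coefficient of $\phi_N$ is $a_{22}b_{11} - a_{21}b_{12}$, which is precisely the $(1,1)$ entry of $\begin{pmatrix} b_{11} & b_{21} \\ b_{12} & b_{22}\end{pmatrix}\begin{pmatrix} a_{22} & -a_{21} \\ -a_{12} & a_{11}\end{pmatrix}$.

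This proof is essentially a bookkeeping exercise, so there is no serious obstacle; the only place to be careful is the sign pattern in the cross terms and matching the index conventions so that the coefficients line up with the rows of the first factor of $C$ (i.e., the transpose-like arrangement of the $b_{ij}$'s) and the columns of the second factor. I would verify the four coefficients one at a time against the four entries of $C$, and invoke \eqref{35} once to dispose of the $\det(B\Phi_N(\lambda))$ term. No appeal to \eqref{34} is needed for this lemma, though it explains why $G$ (and hence $\Gamma$) has the degree properties alluded to later.
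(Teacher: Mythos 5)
Your proposal is correct and follows exactly the route the paper intends: the paper gives no written proof beyond "can be easily deduced from \eqref{35} and \eqref{36} via direct calculations," and your bilinear expansion of $\det(A+B\Phi_N(\lambda))$ together with $\det(B\Phi_N(\lambda))=\det B$ from \eqref{35} is precisely that calculation. One small slip in your sample check: the coefficient of $\phi_N(\lambda)$ is $a_{22}b_{11}-a_{12}b_{21}$ (not $a_{22}b_{11}-a_{21}b_{12}$), and it is this value that equals the $(1,1)$ entry of $C$, so the identification with $G(\lambda)$ still goes through.
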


\begin{thm}\label{th31} For each $\lambda
\in \mathbb{C}$, among all boundary conditions,
$[\Phi_N(\lambda)\,|\,-I]$ is the unique one that has $\lambda$
as an eigenvalue of geometric multiplicity $2$.\end{thm}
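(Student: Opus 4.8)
The plan is to reformulate the condition ``$\lambda$ is an eigenvalue of geometric multiplicity $2$'' as a linear-algebra condition on the $2\times 4$ coefficient matrix $(A,B)$ and the transfer matrix $\Phi_N(\lambda)$, and then show that this condition pins down the BC uniquely. First I would fix $\lambda\in\mathbb{C}$ and recall from the proof of Lemma~\ref{l32} that, writing an arbitrary solution of \eqref{11} as $y(\lambda)=c_1\phi(\lambda)+c_2\psi(\lambda)$, the vector $(c_1,c_2)^T$ yields an eigenfunction for $\mathbf{A}=[A\,|\,B]$ precisely when $(A+B\Phi_N(\lambda))(c_1,c_2)^T=0$. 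Since \eqref{11} has exactly two linearly independent solutions, the eigenspace for $\lambda$ is isomorphic to $\ker(A+B\Phi_N(\lambda))\subseteq\mathbb{C}^2$; hence $\lambda$ has geometric multiplicity $2$ if and only if $A+B\Phi_N(\lambda)=0$, i.e. $A=-B\Phi_N(\lambda)$.

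Next I would verify that the BC $[\Phi_N(\lambda)\,|\,-I]$ is legitimate and does the job: taking $A=\Phi_N(\lambda)$, $B=-I$, we have $\operatorname{rank}(A,B)=\operatorname{rank}(\Phi_N(\lambda),-I)=2$ since $-I$ is already rank $2$, so \eqref{14} holds and this is a genuine element of $\mathcal{A}^{\mathbb{C}}$; and $A+B\Phi_N(\lambda)=\Phi_N(\lambda)-\Phi_N(\lambda)=0$, so by the preceding paragraph $\lambda$ is an eigenvalue of geometric multiplicity $2$ for it. For uniqueness, suppose $[A\,|\,B]$ is any BC with $\lambda$ an eigenvalue of geometric multiplicity $2$, so $A=-B\Phi_N(\lambda)$. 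I claim $B$ must be nonsingular: if not, $\operatorname{rank} B\le 1$, and then $\operatorname{rank}(A,B)=\operatorname{rank}(-B\Phi_N(\lambda),B)=\operatorname{rank}\big(B(-\Phi_N(\lambda),I)\big)\le\operatorname{rank} B\le 1$, contradicting \eqref{14}. So $B\in\mathrm{GL}(2,\mathbb{C})$, and then $(A,B)=(-B\Phi_N(\lambda),B)=(-B)\,(\Phi_N(\lambda),-I)$, which means $(A,B)\sim(\Phi_N(\lambda),-I)$ in the quotient \eqref{21}; that is, $[A\,|\,B]=[\Phi_N(\lambda)\,|\,-I]$ as elements of $\mathcal{A}^{\mathbb{C}}$.

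The only genuinely substantive step is the identification of the eigenspace with $\ker(A+B\Phi_N(\lambda))$ together with the observation that a two-dimensional kernel forces the $2\times 2$ matrix $A+B\Phi_N(\lambda)$ to vanish identically; everything after that is the rank bookkeeping above, which I expect to be the main (though minor) obstacle, since one must be careful that $\operatorname{rank}(A,B)=2$ is used exactly once, to exclude singular $B$, and that the passage to the quotient is clean. I would also remark that $\det\Phi_N(\lambda)=1$ by \eqref{35}, so in fact $\Phi_N(\lambda)\in\mathrm{SL}(2,\mathbb{C})$, which is consistent with the normalized form in the chart $\mathcal{N}_{3,4}^{\mathbb{C}}$ but is not needed for the argument.
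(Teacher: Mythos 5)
Your proposal is correct and follows essentially the same route as the paper: identify the eigenspace with $\ker(A+B\Phi_N(\lambda))$ via \eqref{37}, deduce $A=-B\Phi_N(\lambda)$, use $\operatorname{rank}(A,B)=2$ to force $B$ nonsingular, and conclude equality in the quotient \eqref{21}. Your explicit check that $[\Phi_N(\lambda)\,|\,-I]$ itself satisfies \eqref{14} and realizes $\lambda$ with geometric multiplicity $2$ is a small completeness point the paper leaves implicit, but it is not a different argument.
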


\begin{proof} From \eqref{37}, a complex BC $[A\,|\,B]$ has $\lambda$
as an eigenvalue of geometric multiplicity 2 if and only if
$A=-B\Phi_N(\lambda)$. It follows that
\begin{equation}\label{38}
(A,B)=-B(\Phi_N(\lambda),-I).
\end{equation}
Since rank$(A,B)\leq{\rm rank}B$ from \eqref{38}, $B$ is nonsingular by \eqref{14}. Thus, the only BC that has
 $\lambda$ as an eigenvalue of geometric multiplicity 2 is the one $[\Phi_N(\lambda)\,|\,-I]$. The proof is complete.\end{proof}

Now, we discuss relationships between the analytic and geometric multiplicities of an
eigenvalue of an SLP.

\begin{thm} \label{th32} The analytic multiplicity of an
eigenvalue is greater than or equal to its geometric
multiplicity.\end{thm}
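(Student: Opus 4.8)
The plan is to split the argument according to the geometric multiplicity of the eigenvalue, which by Remark \ref{r31}(ii) is either $1$ or $2$. When the geometric multiplicity is $1$ there is nothing to do: by Lemma \ref{l32} every eigenvalue is a zero of the characteristic function $\Gamma$, hence has analytic multiplicity at least $1$. So the whole content of the theorem is the case of geometric multiplicity $2$, where one must show that such an eigenvalue is a zero of $\Gamma$ of order at least $2$.

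Suppose then that $\lambda_0$ is an eigenvalue of geometric multiplicity $2$. First I would invoke Theorem \ref{th31}: the boundary condition of the problem must be $[\Phi_N(\lambda_0)\,|\,-I]$, so $A=\Phi_N(\lambda_0)$ and $B=-I$ may be taken as a representative of its coefficient matrix. Since the characteristic function is determined only up to a nonzero constant multiple (Definition \ref{de32}(i)), and such a multiple does not change the order of a zero, it suffices to analyze the representative
$$\Gamma(\lambda)=\det\bigl(A+B\Phi_N(\lambda)\bigr)=\det\bigl(\Phi_N(\lambda_0)-\Phi_N(\lambda)\bigr)=\det\bigl(\Phi_N(\lambda)-\Phi_N(\lambda_0)\bigr),$$
the last step using $\det(-M)=\det M$ for $2\times2$ matrices $M$. (If this polynomial is the zero polynomial, then all complex numbers are eigenvalues and the assertion is vacuous.)

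Now set $P(\lambda):=\Phi_N(\lambda)-\Phi_N(\lambda_0)$. By Lemma \ref{l31} the entries $p_{ij}(\lambda)$ of $P(\lambda)$ are polynomials in $\lambda$, and $P(\lambda_0)=0$ forces $p_{ij}(\lambda_0)=0$, so $(\lambda-\lambda_0)$ divides each $p_{ij}(\lambda)$; write $p_{ij}(\lambda)=(\lambda-\lambda_0)\,\tilde p_{ij}(\lambda)$. Expanding the $2\times2$ determinant gives
$$\Gamma(\lambda)=p_{11}(\lambda)p_{22}(\lambda)-p_{12}(\lambda)p_{21}(\lambda)=(\lambda-\lambda_0)^2\bigl(\tilde p_{11}(\lambda)\tilde p_{22}(\lambda)-\tilde p_{12}(\lambda)\tilde p_{21}(\lambda)\bigr),$$
so $(\lambda-\lambda_0)^2\mid\Gamma(\lambda)$ and $\lambda_0$ is a zero of $\Gamma$ of order at least $2$; that is, its analytic multiplicity is at least $2$, which equals its geometric multiplicity. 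Combining the two cases proves the theorem. The argument has essentially no hard step once Theorem \ref{th31} is in hand; the only subtleties are the harmless normalization of $\Gamma$ up to a constant and the degenerate case $\Gamma\equiv0$, and the crux is simply that geometric multiplicity $2$ forces all entries of $A+B\Phi_N(\cdot)$ to vanish at $\lambda_0$.
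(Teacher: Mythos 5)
Your proof is correct, but it takes a genuinely different route from the paper's. Both arguments make the same reductions: geometric multiplicity $1$ is immediate from Lemma \ref{l32}, and Theorem \ref{th31} forces the boundary condition in the multiplicity-$2$ case to be $[\Phi_N(\lambda_0)\,|\,-I]$, so that (up to the harmless nonzero constant coming from the choice of representative) $\Gamma(\lambda)=\det\bigl(\Phi_N(\lambda_0)-\Phi_N(\lambda)\bigr)$. From there the paper proceeds analytically: it writes $\Gamma$ out explicitly via Lemma \ref{l33} as the combination \eqref{39} of $\phi_N(\lambda)$, $\psi_N(\lambda)$, $f_N\Delta\phi_N(\lambda)$, $f_N\Delta\psi_N(\lambda)$, differentiates the Wronskian identity $\det\Phi_N(\lambda)\equiv1$ from \eqref{35} to obtain \eqref{310}, and combines the two to get $\Gamma'(\lambda_*)=0$. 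You instead argue algebraically: every entry of $P(\lambda)=\Phi_N(\lambda)-\Phi_N(\lambda_0)$ is a polynomial (Lemma \ref{l31}) vanishing at $\lambda_0$, hence divisible by $\lambda-\lambda_0$, and the $2\times2$ determinant, being a sum of products of two entries, is therefore divisible by $(\lambda-\lambda_0)^2$. Your argument is shorter, needs neither Lemma \ref{l33} nor the identity $\det\Phi_N\equiv1$, and the underlying fact (the determinant of a $d\times d$ matrix of polynomials all vanishing at $\lambda_0$ is divisible by $(\lambda-\lambda_0)^d$) would transfer directly to the full-multiplicity case of vector problems such as \eqref{15}--\eqref{16}; what the paper's computation buys in exchange is the explicit expression \eqref{39} for $\Gamma$ at such a boundary condition. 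Your attention to the normalization of $\Gamma$ up to a nonzero constant and to the degenerate case $\Gamma\equiv0$ is careful and correct.
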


\begin{proof} It suffices to prove that the analytic multiplicity of
any eigenvalue $\lambda_* $ of geometric multiplicity 2 is at
least 2 by (ii) of Remark \ref{r31}. By Theorem \ref{th31}, we only need to show that as an
eigenvalue for the BC $[\Phi_N(\lambda_*)\,|\,-I]$, $\lambda_*$ has
multiplicity $\geq 2$. Now, in this case it follows from Lemma \ref{l33} that the characteristic function is given by
\begin{equation}\label{39}
\begin{array}{ll} \Gamma(\lambda)
& =2-(f_N\Delta\psi_N(\lambda_*))\phi_N(\lambda)
                       +(f_N\Delta\phi_N(\lambda_*))\psi_N(\lambda)
\vspace{2mm}\\
& +\psi_N(\lambda_*)(f_N\Delta\phi_N(\lambda))
 -\phi_N(\lambda_*)(f_N\Delta\psi_N(\lambda)).
\end{array}
\end{equation}
By \eqref{35} we obtain that
\begin{equation}\label{310}
\begin{array}{ll}
\phi'_N(\lambda)
(f_N\Delta\psi_N(\lambda))
  +\phi_N(\lambda) (f_N\Delta\psi_N'(\lambda))\\[1.0ex]
  -\psi'_N(\lambda)(f_N\Delta\phi_N(\lambda))
  -\psi_N(\lambda)(f_N\Delta\phi'_N(\lambda)) \equiv0,\;\;\lambda\in \mathbb{C}.\end{array}
\end{equation}
Then, \eqref{39} and \eqref{310} together yield that
$\Gamma'(\lambda_*) =0$; that is, the analytic multiplicity of
$\lambda_*$ is at least 2. The proof is complete.\end{proof}

We shall remark that the analytic and geometric multiplicities of an eigenvalue are
not necessarily equal for an SLP in general, see Examples \ref{e51} and \ref{e52}. However,
we shall show that they are equal in the case that the SLP is self-adjoint.

Next, we consider the self-adjoint case. The self-adjoint SLP \eqref{11}-\eqref{12} can be written as \eqref{15}-\eqref{16} by
 setting $ d=1,C_n=f_n,B_n=q_n,$ $$
R=(R_1,R_2)=\begin{pmatrix}-a_{11}&b_{11}\\-a_{21}&b_{21}\end{pmatrix},\;\;
S=(S_1,S_2)=\begin{pmatrix}a_{12}&b_{12}\\a_{22}&b_{22}\end{pmatrix}.$$
Then
\begin{equation}\label{311}
r:=\text{ rank} (R_1+S_1C_0,S_2)=\text{ rank} \begin{pmatrix}-a_{11}+f_0a_{12}&b_{12}\\-a_{21}+f_0a_{22}&b_{22}\end{pmatrix}.
\end{equation}
Obviously, $0\leq r \leq 2$. Further, we have
\begin{equation}\label{312}
\kappa:=\text{ det} (R_1+S_1C_0,S_2)=a_{21}b_{12}-a_{11}b_{22}+f_0(a_{12}b_{22}-a_{22}b_{12}).
\end{equation}

The following result is a direct consequence of Theorem 4.1 in \cite{Shi3}.

\begin{lem}\label{l34} The sum of geometric multiplicities of all the eigenvalues of a self-adjoint
SLP {\rm (1.1)-(1.2)} is $N-2+r$, and moreover, all its eigenvalues are real.\end{lem}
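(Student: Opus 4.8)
The plan is to deduce the statement directly from Theorem 4.1 in \cite{Shi3}, via the reduction already set up in the paragraph preceding \eqref{311}. First I would set $d=1$, $C_n=f_n$ for $n\in[0,N]$, $B_n=q_n$, and retain $w_n$ for $n\in[1,N]$, so that \eqref{11} becomes precisely the scalar instance of \eqref{15}. Then I would check that, under this identification, the BC \eqref{12} coincides with the BC \eqref{16} in which $R=(R_1,R_2)$ and $S=(S_1,S_2)$ are the matrices displayed just before \eqref{311}: expanding $A(y_0,f_0\Delta y_0)^{T}+B(y_N,f_N\Delta y_N)^{T}=0$ and $R(-y_0,y_N)^{T}+S(C_0\Delta y_0,C_N\Delta y_N)^{T}=0$ and matching, row by row, the coefficients of $y_0$, $y_N$, $f_0\Delta y_0=C_0\Delta y_0$, and $f_N\Delta y_N=C_N\Delta y_N$ gives $r_{i1}=-a_{i1}$, $r_{i2}=b_{i1}$, $s_{i1}=a_{i2}$, $s_{i2}=b_{i2}$ for $i=1,2$, which is exactly the stated $R$ and $S$.

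Next I would verify that the hypotheses of Theorem 4.1 in \cite{Shi3} are met. Since the SLP \eqref{11}--\eqref{12} is self-adjoint, Definition \ref{de22} forces $(1/f,q,w)\in\Omega_N^{\mathbb{R},+}$; hence every $C_n=f_n$ and every $B_n=q_n$ is a real number, so a trivially Hermitian $1\times1$ matrix, every $w_n$ is a positive real number, i.e.\ $w_n>0$, and $C_0=f_0\neq0$, $C_N=f_N\neq0$ by \eqref{13}, so $C_0$ and $C_N$ are nonsingular. The rank condition ${\rm rank}(R,S)=2d=2$ follows from \eqref{14}, because as a $2\times4$ matrix $(R,S)$ is obtained from $(A,B)$ only by permuting the columns and changing the sign of one of them, operations that do not affect the rank. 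Finally, the self-adjointness of $\mathbf A$ in the sense of Definition \ref{de21}(i), i.e.\ $AEA^*=BEB^*$, is equivalent by Remark \ref{r21} to the self-adjointness condition on $(R,S)$ used in \cite{Shi3}. Therefore the problem \eqref{15}--\eqref{16} produced from \eqref{11}--\eqref{12} is a self-adjoint SLP in the sense of \cite{Shi3} with $d=1$ and with $r$ given by \eqref{311}.

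With these verifications in hand, Theorem 4.1 in \cite{Shi3} applies and yields immediately that the sum of the geometric multiplicities of all the eigenvalues of the SLP equals $Nd-2d+r=N-2+r$ and that every eigenvalue is real. I do not anticipate any real difficulty here: the whole argument is a bookkeeping translation, and the only two points requiring a little care are the matching of the two self-adjointness formulations, which is taken care of by Remark \ref{r21}, and the invariance of ${\rm rank}(A,B)$ under the elementary column operations producing $(R,S)$. As a byproduct, reality of the spectrum together with Lemma \ref{l32} confirms that a self-adjoint SLP cannot have every complex number as an eigenvalue, so its spectrum is finite and the counting statement is meaningful.
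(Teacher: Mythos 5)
Your proposal is correct and follows the same route as the paper: the paper itself states Lemma \ref{l34} as a direct consequence of Theorem 4.1 in \cite{Shi3}, using precisely the identification $d=1$, $C_n=f_n$, $B_n=q_n$ and the matrices $R$, $S$ displayed before \eqref{311}. Your additional bookkeeping (the column-by-column matching of \eqref{12} with \eqref{16}, the rank invariance under column permutation and sign change, and the equivalence of the two self-adjointness notions via Remark \ref{r21}) just makes explicit the verifications the paper leaves to the reader.
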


The following result can be deduced from  Theorem 4.3 in \cite{Shi4}. We shall give an alternative and direct proof as follows.

\begin{thm}\label{th33} The analytic and geometric multiplicities of each eigenvalue of a self-adjoint SLP {\rm (1.1)-(1.2)} are the same.\end{thm}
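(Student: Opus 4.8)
The plan is to compare \emph{total} analytic and geometric multiplicities. Since a self-adjoint SLP has only real eigenvalues by Lemma~\ref{l34}, not every complex number is an eigenvalue, so by Lemma~\ref{l32} the characteristic function $\Gamma$ is a nonzero polynomial, say of degree $D$; its zeros are precisely the eigenvalues $\lambda_1,\dots,\lambda_k$, and the order of $\lambda_i$ as a zero of $\Gamma$ is its analytic multiplicity $m_i$, so $\sum_{i=1}^k m_i=D$. By Theorem~\ref{th32}, $m_i\ge g_i$ for the geometric multiplicities $g_i$, and by Lemma~\ref{l34}, $\sum_{i=1}^k g_i=N-2+r$ with $r$ as in \eqref{311}. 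Hence $D=\sum_{i=1}^k m_i\ge\sum_{i=1}^k g_i=N-2+r$; therefore if I can also show $D\le N-2+r$, then $D=N-2+r$ and $\sum_{i=1}^k(m_i-g_i)=0$, which (all summands being nonnegative) forces $m_i=g_i$ for every $i$, proving the theorem.

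So the real content is the inequality $D\le N-2+r$. I would start from Lemma~\ref{l33}: $\Gamma(\lambda)=\det A+\det B+G(\lambda)$, where $G$ is the linear combination of $\phi_N,\psi_N,f_N\Delta\phi_N,f_N\Delta\psi_N$ with coefficients $c_{ij}$ from the matrix $C$ of Lemma~\ref{l33}. Since $\det A+\det B$ is constant and, by \eqref{34}, $G$ has degree at most $N$, always $D\le N$; this settles $r=2$. Reading the coefficient of $\lambda^N$ in $G$ off \eqref{34} (only $f_N\Delta\phi_N$ and $f_N\Delta\psi_N$ contribute) gives, up to the nonzero factor $(-1)^N w_N\prod_{i=1}^{N-1}(w_i/f_i)$, the quantity $c_{21}+c_{22}/f_0$; expanding the $c_{ij}$ and comparing with \eqref{312} shows $c_{21}+c_{22}/f_0=-\kappa/f_0$, where $\kappa$ is the determinant in \eqref{312}. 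Since $\kappa$ is the determinant of the $2\times2$ matrix in \eqref{311}, we have $\kappa\ne0$ exactly when $r=2$; hence $r\le1$ implies the $\lambda^N$-coefficient vanishes, so $D\le N-1=N-2+r$ when $r=1$.

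The remaining case $r=0$ is the step I expect to require the most care. Here the $2\times2$ matrix in \eqref{311} vanishes, i.e.\ $a_{11}=f_0a_{12}$, $a_{21}=f_0a_{22}$, and $b_{12}=b_{22}=0$. With $b_{12}=b_{22}=0$ one computes $c_{21}=c_{22}=0$, so $G(\lambda)=c_{11}\phi_N(\lambda)+c_{12}\psi_N(\lambda)$ already has degree $\le N-1$; and by \eqref{34} its $\lambda^{N-1}$-coefficient is, up to the nonzero factor $(-1)^{N-1}\prod_{i=1}^{N-1}(w_i/f_i)$, equal to $c_{11}+c_{12}/f_0=(b_{11}a_{22}-b_{21}a_{12})+f_0^{-1}(b_{21}a_{11}-b_{11}a_{21})$, which collapses to $0$ once $a_{11}=f_0a_{12}$ and $a_{21}=f_0a_{22}$ are substituted. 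Thus $D\le N-2=N-2+r$, completing the verification of $D\le N-2+r$ in all cases. The only genuine obstacle is bookkeeping: keeping straight the correspondence between $(A,B)$, the matrix $C$ of Lemma~\ref{l33}, and the rank $r$ and determinant $\kappa$ in \eqref{311}--\eqref{312}, and correctly isolating the top coefficients in \eqref{34}; no new analytic input beyond Theorem~\ref{th32}, Lemma~\ref{l34}, and the fundamental theorem of algebra is needed.
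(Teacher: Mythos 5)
Your proposal is correct and follows essentially the same route as the paper: both compare the total analytic multiplicity (the degree of $\Gamma$) with the total geometric multiplicity $N-2+r$ from Lemma~\ref{l34}, use Theorem~\ref{th32} for the termwise inequality, and kill the top coefficients of $\Gamma$ case by case on $r$ via \eqref{34} and \eqref{312}. The only cosmetic difference is in the case $r=0$, where the paper normalizes the boundary condition to the form $y_1=0$, $y_N=0$ and computes $\Gamma$ from an equivalent coefficient matrix, while you verify directly that $c_{21}=c_{22}=0$ and $c_{11}+c_{12}/f_0=0$; both computations are correct.
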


\begin{proof} For convenience, by $\tau_{1}$ and $\tau_{2}$ denote the sum of the analytic multiplicities
and that of the geometric multiplicities of all the eigenvalues of the self-adjoint SLP, respectively,
by $\lambda_1, \cdots , \lambda_s$ denote the distinct eigenvalues of the SLP and
by $\tau^1_1,\cdots,\tau^s_1$ and  $\tau^1_2,\cdots,\tau^s_2$
denote the corresponding analytic and geometric multiplicities,  respectively,
where $\tau^1_1+\cdots+\tau^s_1=\tau_{1}$, $\tau^1_2+\cdots+\tau^s_2=\tau_{2}$.

The rest proof is divided into two steps.

{\bf Step 1.} $\tau_{1}$=$\tau_{2}$.

We divide the discussion into three cases.

{\bf Case 1.} $r=2$.

 By Lemma \ref{l34}, $\tau_{2}=N$. From \eqref{311}, we get that $\kappa\neq0.$
 By Lemma \ref{l33}, \eqref{34}, and \eqref{312} one can get that the leading term of $\Gamma(\lambda)$ as a polynomial of $\lambda$ is
 $$(-1)^{N+1}\left(({w_N}/{f_0})\prod\limits_{i=1}^{N-1}({w_i}/{f_i})\right)\kappa\lambda^{N},
 $$
and then $\tau_{1}=N$.
Hence, $\tau_{1}=\tau_{2}=N$.

{\bf Case 2.} $r=1$.

By Lemma \ref{l34}, $\tau_{2}=N-1$. It follows from \eqref{311} that $\kappa=0$, and then
 $$(-1)^{N+1}\left(({w_N}/{f_0})\prod\limits_{i=1}^{N-1}({w_i}/{f_i})\right)\kappa\lambda^{N}=0.$$
Thus, $\tau_{1}\leq N-1$ by Definition \ref{de32}. Further, we have that $\tau_{1}\geq\tau_{2}$ by Theorem \ref{th32}. Hence, $\tau_{1}=\tau_{2}=N-1$.

{\bf Case 3.} $r=0$.

By Lemma \ref{l34}, $\tau_{2}=N-2$. From \eqref{311}, we get that
\begin{equation}\label{313}
\begin{array}{cccc}a_{11}=f_{0}a_{12},&a_{21}=f_{0}a_{22},&b_{12}=b_{22}=0.\end{array}
\end{equation}
By inserting \eqref{313} into \eqref{12} and by \eqref{14}, the BC can be written as the form
 $$ \begin{array}{cc}y_{1}=0,&y_{N}=0.\end{array}
 $$
 This implies that there exists a $T\in {\it GL}(2,\mathbb{C})$ such that
 $$ T(A,B)=(A_1,B_1),$$
 where
 $$A_1=\begin{pmatrix}f_0&1\\0&0\end{pmatrix},\;
 B_1=\begin{pmatrix}0&0\\1&0\end{pmatrix}.$$
 By Lemma \ref{l33} we get that
 $$\Gamma_1(\lambda):=\det\left(A_1+B_1\Phi_N(\lambda)\right)
 =-\phi_N(\lambda)+f_0\psi_N(\lambda). $$
Then we have that
 $$\Gamma(\lambda)={\rm det}T\cdot \Gamma_1(\lambda)=\left(\det T\right)\left(-\phi_N(\lambda)+f_0\psi_N(\lambda)\right),
 $$
which, together with \eqref{34}, implies that the coefficients of the terms $\lambda^{N}$ and $\lambda^{N-1}$  of $\Gamma(\lambda)$ are equal to zero.
Thus, $\tau_{1}\leq N-2$ by Definition \ref{de32}. Again by Theorem \ref{th32} we have that $\tau_{1}\geq\tau_{2}$. Hence, $\tau_{1}=\tau_{2}=N-2$.

{\bf Step 2.} $\tau_1^i=\tau_2^i$ for $1\leq i \leq s$.

By Theorem \ref{th32}, $\tau_1^i\geq\tau_2^i$ for $1\leq i \leq s $. Suppose that there exists a $j$, $1\leq j \leq s $, such that $\tau_1^j>\tau_2^j$.
Then \vspace{-0.2cm}
 \begin{equation}\label{314}
\tau_1=\sum^{s}_{i=1}\tau^i_1>\sum^{s}_{i=1}\tau^i_2=\tau_2,
\end{equation}
 which contradicts to $\tau_{1}$=$\tau_{2}$.
 Therefore, the assertion holds and the entire proof is complete.\end{proof}
 The following result is a direct consequence of Theorems \ref{th31} and \ref{th33}. It has been firstly given in  Theorem 4.3.1 in \cite{Atkinson}
  for a special class of separated self-adjoint boundary conditions and
 then in  Theorem 2.2.6 in \cite{Jirari} for more general case.
\begin{cor}\label{c31} Assume that \eqref{11} is in
$\Omega_N^{\mathbb R,+}$. Then all the eigenvalues for each separated
self-adjoint boundary condition are simple.\end{cor}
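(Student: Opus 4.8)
The plan is to deduce the statement formally from Theorem~\ref{th31} and Theorem~\ref{th33}. First note that, since \eqref{11} lies in $\Omega_N^{\mathbb{R},+}$ and every separated self-adjoint BC is in particular self-adjoint, the SLP consisting of \eqref{11} and a separated self-adjoint BC is self-adjoint in the sense of Definition~\ref{de22}. Hence Theorem~\ref{th33} applies and tells us that, for such a problem, the analytic multiplicity of each eigenvalue equals its geometric multiplicity. By (ii) of Remark~\ref{r31} the geometric multiplicity of any eigenvalue is $1$ or $2$, so it suffices to show that no eigenvalue of a separated self-adjoint BC (with \eqref{11} in $\Omega_N^{\mathbb{R},+}$) has geometric multiplicity $2$: if every eigenvalue has geometric multiplicity $1$, then by Theorem~\ref{th33} every eigenvalue has analytic multiplicity $1$, that is, is simple.

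To rule out geometric multiplicity $2$, I would argue by contradiction. Suppose some $\lambda_*\in\mathbb{C}$ were an eigenvalue of geometric multiplicity $2$ for a separated self-adjoint BC of the form $\mathbf{S}_{\alpha,\beta}$ in \eqref{25}. By Theorem~\ref{th31}, the only BC admitting $\lambda_*$ as an eigenvalue of geometric multiplicity $2$ is $[\Phi_N(\lambda_*)\,|\,-I]$, so one would have $\mathbf{S}_{\alpha,\beta}=[\Phi_N(\lambda_*)\,|\,-I]$. This is impossible. On the one hand, the proof of Theorem~\ref{th31} shows precisely that a BC admitting an eigenvalue of geometric multiplicity $2$ has a nonsingular $B$-block, and whether the $B$-block is singular does not depend on the chosen representative, since representatives differ by left multiplication by an element of $GL(2,\mathbb{C})$. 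On the other hand, the $B$-block of $\mathbf{S}_{\alpha,\beta}$ in \eqref{25} has a zero first row and is therefore singular, a contradiction. (Alternatively one may invoke Remark~\ref{r22}: $[\Phi_N(\lambda_*)\,|\,-I]\in\mathcal{N}_{3,4}^{\mathbb{C}}\cap\mathcal{B}^{\mathbb{C}}$ is a coupled self-adjoint BC, whereas $\mathbf{S}_{\alpha,\beta}$ is separated and hence, by Definition~\ref{de21}, not coupled.)

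Combining the two steps, every eigenvalue of the given problem has geometric multiplicity $1$ and therefore analytic multiplicity $1$, so all the eigenvalues are simple. There is no genuinely hard step here: the corollary is a formal consequence of Theorems~\ref{th31} and \ref{th33}, and the only point that deserves a line of justification is that $[\Phi_N(\lambda_*)\,|\,-I]$ cannot be a separated self-adjoint BC, which follows either from the nonsingularity of its $B$-block contrasted with the block structure \eqref{25}, or from the identification of $\mathcal{N}_{3,4}^{\mathbb{C}}\cap\mathcal{B}^{\mathbb{C}}$ with the coupled self-adjoint BCs in Remark~\ref{r22}.
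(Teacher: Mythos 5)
Your proof is correct and follows essentially the same route as the paper, which presents the corollary as a direct consequence of Theorems~\ref{th31} and~\ref{th33}: Theorem~\ref{th31} rules out geometric multiplicity $2$ because $[\Phi_N(\lambda_*)\,|\,-I]$ has a nonsingular $B$-block while any separated BC does not, and Theorem~\ref{th33} then upgrades geometric simplicity to analytic simplicity. Your added justifications (invariance of the singularity of the $B$-block under the $GL(2,\mathbb{C})$ action, and the reality of $\lambda_*$ needed for the alternative via Remark~\ref{r22}) are exactly the details the paper leaves implicit.
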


\subsection{ Continuous eigenvalue branches}

In this subsection, using the topologies and geometric structures on the space of discrete SLPs
introduced in Section 2, we shall show that sufficiently close
discrete SLPs have near-by eigenvalues in a given bounded region of
$\mathbb{C}$, and explain how such eigenvalues form the so-called continuous
eigenvalue branches. In a general case, all the simple eigenvalues live in so-called
continuous simple eigenvalue branches over the space of problems.
However, we can get a better result in the self-adjoint case
that all the eigenvalues, which may be simple or not simple,  live in
continuous eigenvalue branches over the space of the problems.

To indicate the dependence of $\Phi_n(\lambda)$ on the discrete
SLE \eqref{11}, we write $\Phi_n(\lambda,\pmb\omega)$ with
$\pmb\omega =(1/f,q,w)\in \Omega_N^{\mathbb{C}}$. The following result can be deduced from \eqref{31} and \eqref{33}.

\begin{lem}\label{l35} Let $\pmb\omega \in \Omega_N^{\mathbb{C}}$. For
each $\varepsilon>0$, there is $\delta>0$ such that if $\pmb\sigma
\in \Omega_N^{\mathbb{C}}$ satisfies $\| \pmb\sigma -\pmb\omega \|
<\delta$, then
$$
\| \Phi_n(\lambda,\pmb\sigma) -\Phi_n(\lambda,\pmb\omega)
\|_1 <\varepsilon, \;\;\;\; \;\;n \in [0,N],  \;\;|\lambda| \leq
1/\varepsilon,$$
 where $\|\cdot\|_1$ is the maximum norm for the $2\times2$ matrix.
\end{lem}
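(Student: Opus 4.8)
The plan is to prove the estimate by induction on $n$, exploiting the product recurrence \eqref{33} together with the fact that, by \eqref{31}, $\Phi_0(\lambda)\equiv I$ for every discrete SLE, so that the two transfer matrices start out equal. Fix $\pmb\omega\in\Omega_N^{\mathbb C}$ and $\varepsilon>0$ once and for all, and work throughout on the fixed compact disc $D:=\{\lambda\in\mathbb C:|\lambda|\le 1/\varepsilon\}$. Write $M_n(\lambda,\pmb\omega)$ for the one-step transfer factor in \eqref{33}, so that $\Phi_n(\lambda,\pmb\omega)=M_n(\lambda,\pmb\omega)\,\Phi_{n-1}(\lambda,\pmb\omega)$ for $n\in[1,N]$, and likewise with $\pmb\sigma$ in place of $\pmb\omega$. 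All constants introduced below may depend on $\pmb\omega$, $\varepsilon$ and $N$; this is harmless since $\delta$ is chosen only at the very end.

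First I would record two elementary estimates, valid for all $\lambda\in D$, all $n\in[1,N]$, and all $\pmb\sigma\in\Omega_N^{\mathbb C}$ with $\|\pmb\sigma-\pmb\omega\|\le 1$. (i) Each entry of $M_n(\lambda,\pmb\sigma)$ is a polynomial in $\lambda$ of degree $\le 1$ whose coefficients are bounded in terms of $\|\pmb\omega\|+1$, so $\|M_n(\lambda,\pmb\sigma)\|_1\le C_1$; and each entry of $\Phi_n(\lambda,\pmb\omega)$ is a polynomial in $\lambda$ by Lemma \ref{l31}, hence $\|\Phi_n(\lambda,\pmb\omega)\|_1\le C_2$ on $D$. (ii) Comparing $M_n(\lambda,\pmb\sigma)$ with $M_n(\lambda,\pmb\omega)$ entrywise, the $(1,1)$ entries coincide, the $(1,2)$ entries differ by the difference of the corresponding $1/f$-coordinates, which is $<\|\pmb\sigma-\pmb\omega\|$ by the definition of the maximum norm on $\Omega_N^{\mathbb C}$, while the $(2,1)$ and $(2,2)$ entries differ by at most a bounded multiple of $(1+1/\varepsilon)\|\pmb\sigma-\pmb\omega\|$; for the $(2,2)$ entry one writes the relevant product-difference as $[(q_n-\lambda w_n)^{\pmb\sigma}-(q_n-\lambda w_n)^{\pmb\omega}](1/f_{n-1})^{\pmb\sigma}+(q_n-\lambda w_n)^{\pmb\omega}[(1/f_{n-1})^{\pmb\sigma}-(1/f_{n-1})^{\pmb\omega}]$ and bounds each summand. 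Hence $\|M_n(\lambda,\pmb\sigma)-M_n(\lambda,\pmb\omega)\|_1\le K\|\pmb\sigma-\pmb\omega\|$.

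With these in hand I would put $e_n:=\sup_{\lambda\in D}\|\Phi_n(\lambda,\pmb\sigma)-\Phi_n(\lambda,\pmb\omega)\|_1$; then $e_0=0$, and the decomposition
$$\Phi_n(\lambda,\pmb\sigma)-\Phi_n(\lambda,\pmb\omega)=M_n(\lambda,\pmb\sigma)\bigl(\Phi_{n-1}(\lambda,\pmb\sigma)-\Phi_{n-1}(\lambda,\pmb\omega)\bigr)+\bigl(M_n(\lambda,\pmb\sigma)-M_n(\lambda,\pmb\omega)\bigr)\Phi_{n-1}(\lambda,\pmb\omega),$$
combined with the estimates above and the inequality $\|AB\|_1\le 2\|A\|_1\|B\|_1$ for $2\times2$ matrices, gives a recursion $e_n\le a\,e_{n-1}+b\,\|\pmb\sigma-\pmb\omega\|$ with $a,b$ depending only on $\pmb\omega$ and $\varepsilon$. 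Iterating from $e_0=0$ yields $e_n\le C\,\|\pmb\sigma-\pmb\omega\|$ for every $n\in[0,N]$, where $C$ again depends only on $\pmb\omega$ and $\varepsilon$. It then suffices to take $\delta:=\min\{1,\,\varepsilon/(C+1)\}$: if $\|\pmb\sigma-\pmb\omega\|<\delta$ then $\|\Phi_n(\lambda,\pmb\sigma)-\Phi_n(\lambda,\pmb\omega)\|_1\le e_n<\varepsilon$ for all $n\in[0,N]$ and all $|\lambda|\le 1/\varepsilon$. I do not expect a genuine obstacle here; the only points to watch are the bookkeeping of the constants' dependence on $\varepsilon$ (harmless, since $\varepsilon$ is fixed before $\delta$) and the fact that the maximum matrix norm $\|\cdot\|_1$ is submultiplicative only up to the factor $2$, which merely rescales $a$ and $b$.
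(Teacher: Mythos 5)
Your proof is correct and follows exactly the route the paper indicates: the paper gives no written proof for Lemma \ref{l35}, stating only that it ``can be deduced from \eqref{31} and \eqref{33}'', and your induction on $n$ starting from $\Phi_0\equiv I$ and using the one-step factorization in \eqref{33} is precisely that deduction, carried out with all constants tracked. The bookkeeping (entrywise bounds on the one-step factors, the telescoping decomposition, and the factor $2$ in the submultiplicativity of the max norm) is all sound.
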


Now, we are ready to prove the locally continuous dependence of
 eigenvalues on the corresponding discrete SLP.

\begin{thm}\label{th34} Let $\lambda_* \in \mathbb{C}$ be an eigenvalue of an
SLP $(\mathbf\omega,\mathbf {A}) \in
\Omega_N^{\mathbb{C}} \times \mathcal{A}^{\mathbb{C}}$ with multiplicity $m$,
$R$ a bounded open subset of $\mathbb{C}$ such that $\lambda_*\in R$, and $\lambda_*$ the only
eigenvalue of $(\mathbf\omega,\mathbf{A})$ in the closure $\bar{R}$ of $R$.
Then, there is a neighborhood $\mathcal{U}$ of
$(\mathbf\omega,\mathbf {A})$ in $\Omega_N^{\mathbb{C}} \times \mathcal A^{\mathbb{C}}$
such that each problem in $\mathcal U$ has exactly $m$ eigenvalues
in $\bar{R}$, counting multiplicity, and they all lie in $R$.\end{thm}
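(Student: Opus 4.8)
The plan is to reduce the statement to a Rouch\'e-type argument for the characteristic function. First, since $\lambda_*$ is an eigenvalue of finite multiplicity $m$, Lemma \ref{l32} tells us that $(\pmb\omega,\mathbf A)$ has only finitely many eigenvalues, so, fixing a representative $(A,B)$ of $\mathbf A$, the characteristic function $\Gamma(\cdot):=\det(A+B\Phi_N(\cdot,\pmb\omega))$ is a non-zero polynomial whose zeros are precisely the eigenvalues of $(\pmb\omega,\mathbf A)$, and by Definition \ref{de32} its order of vanishing at $\lambda_*$ equals $m$. Since $\lambda_*\in R$ with $R$ open and the zeros of $\Gamma$ are isolated, I would fix $r_0>0$ so small that the closed disc $\overline D:=\{\lambda:|\lambda-\lambda_*|\le r_0\}$ is contained in $R$ and contains no zero of $\Gamma$ other than $\lambda_*$. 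Then $K:=\overline R\setminus D$ is compact and, because $\lambda_*$ is the only eigenvalue of $(\pmb\omega,\mathbf A)$ in $\overline R$, $\Gamma$ vanishes nowhere on $K$ (in particular nowhere on $\partial D\subset K$). Set $\eta:=\tfrac12\min\{\min_{\partial D}|\Gamma|,\ \min_{K}|\Gamma|\}>0$.

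Next I would establish the continuous dependence of the characteristic function on the problem. Choose a canonical coordinate system $\mathcal N_{i,j}^{\mathbb C}$ from the atlas \eqref{22} containing $\mathbf A$, and for $\mathbf B$ in this chart let $(A_{\mathbf B},B_{\mathbf B})$ be the corresponding normalized coefficient matrix; its entries depend polynomially, hence continuously, on the chart coordinates of $\mathbf B$, and $(A_{\mathbf A},B_{\mathbf A})$ is the normalized form of $(A,B)$ — passing to this normalized form replaces $\Gamma$ by a non-zero constant multiple and so affects nothing. For $(\pmb\sigma,\mathbf B)$ with $\pmb\sigma\in\Omega_N^{\mathbb C}$ and $\mathbf B$ in the chart, put $\Gamma(\lambda;\pmb\sigma,\mathbf B):=\det(A_{\mathbf B}+B_{\mathbf B}\Phi_N(\lambda,\pmb\sigma))$. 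By Lemma \ref{l35}, $\Phi_N(\lambda,\pmb\sigma)\to\Phi_N(\lambda,\pmb\omega)$ uniformly for $\lambda$ in the bounded set $\overline R$ as $\pmb\sigma\to\pmb\omega$; combining this with the continuity of $\mathbf B\mapsto(A_{\mathbf B},B_{\mathbf B})$, the boundedness of all quantities involved on $\overline R$, and the fact that $\det$ is a polynomial in the matrix entries, one obtains a neighborhood $\mathcal U$ of $(\pmb\omega,\mathbf A)$ in $\Omega_N^{\mathbb C}\times\mathcal A^{\mathbb C}$ with $|\Gamma(\lambda;\pmb\sigma,\mathbf B)-\Gamma(\lambda)|<\eta$ for all $\lambda\in\overline R$ and all $(\pmb\sigma,\mathbf B)\in\mathcal U$.

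Finally I would read off the conclusion. Fix $(\pmb\sigma,\mathbf B)\in\mathcal U$; by Lemmas \ref{l32} and \ref{l33}, $\Gamma(\cdot;\pmb\sigma,\mathbf B)$ is again a polynomial whose zeros (when not identically zero) are exactly the eigenvalues of $(\pmb\sigma,\mathbf B)$. On $K$ we have $|\Gamma(\lambda;\pmb\sigma,\mathbf B)|\ge|\Gamma(\lambda)|-\eta\ge 2\eta-\eta>0$, so $(\pmb\sigma,\mathbf B)$ has no eigenvalue in $\overline R\setminus D$; in particular $\Gamma(\cdot;\pmb\sigma,\mathbf B)\not\equiv 0$. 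On $\partial D$ we have $|\Gamma(\lambda;\pmb\sigma,\mathbf B)-\Gamma(\lambda)|<\eta\le\min_{\partial D}|\Gamma|\le|\Gamma(\lambda)|$, so Rouch\'e's theorem gives that $\Gamma(\cdot;\pmb\sigma,\mathbf B)$ and $\Gamma$ have the same number of zeros, counted with multiplicity, inside $D$, namely $m$. Hence each problem in $\mathcal U$ has exactly $m$ eigenvalues in $\overline R$, all of them lying in $D\subset R$, as claimed. I expect the main obstacle to be the middle step: because $\Gamma$ is defined only up to a non-zero scalar and boundary conditions are points of the manifold $\mathcal A^{\mathbb C}$ rather than matrices, the continuity of $\Gamma$ in the problem must be routed carefully through the canonical charts and normalized representatives of \eqref{22}; once the joint continuity of $(\lambda,\pmb\sigma,\mathbf B)\mapsto\Gamma(\lambda;\pmb\sigma,\mathbf B)$ on $\overline R\times\mathcal U$ is in hand, the Rouch\'e argument and the multiplicity bookkeeping are routine.
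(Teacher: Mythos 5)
Your proposal is correct and follows essentially the same route as the paper: both express the characteristic function via the normalized coefficient matrices in a canonical chart, use Lemma \ref{l35} to get uniform closeness of $\Gamma(\cdot;\pmb\sigma,\mathbf B)$ to $\Gamma$ on $\bar R$, and conclude by Rouch\'e's theorem. Your one refinement --- running Rouch\'e on a small disc $D$ about $\lambda_*$ and excluding zeros from the compact set $\bar R\setminus D$ by a lower bound on $|\Gamma|$ --- is a slightly more careful handling of the arbitrary bounded open set $R$ than the paper's direct application of Rouch\'e on $R$ itself, but it is the same argument in substance.
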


\begin{proof} To indicate the dependence of $\Gamma(\lambda)$ on
the SLP $(\pmb\omega,\mathbf{A})$, we write
$\Gamma_{(\pmb\omega,\mathbf{A})}(\lambda)$. Let $\mathcal N$ be a
coordinate system in \eqref{22} containing $\mathbf{A}$. For all BCs in
$\mathcal N$, we compute the characteristic function using the
corresponding normalized form of the coefficient matrices of the
BCs. By Lemma \ref{l35}, when $(\pmb\sigma,\mathbf{B}) \in
\Omega_N^{\mathbb{C}} \times \mathcal A^{\mathbb{C}}$ is sufficiently close to
$(\pmb\omega,\mathbf{A})$, $\mathbf{B}$ is also in $\mathcal N$, and
$\Gamma_{(\pmb\sigma,\mathbf{B})}(\lambda)$ is close to
$\Gamma_{(\pmb\omega,\mathbf{A})}(\lambda)$  on $\bar{R}$.
Since $\Gamma_{(\pmb\omega,\mathbf{A})}(\lambda)$ (or $\Gamma_{\pmb\sigma,\mathbf{B}}(\lambda)$)
is a polynomial of $\lambda$ and the degree of $\Gamma_{(\pmb\omega,\mathbf{A})}(\lambda)$ (or $\Gamma_{(\pmb\sigma,\mathbf{B})}(\lambda)$) in $\lambda$
is less than or equal to $N$
by Lemma \ref{l33} and \eqref{34}, we can set $$
\Gamma_{(\pmb\omega,\mathbf{A})}(\lambda)=a_k\lambda^k+\cdots
+a_1\lambda+a_0,$$
$$\Gamma_{(\pmb\sigma,\mathbf{B})}(\lambda)=a_k\lambda^k+\cdots+a_1\lambda+a_0
+\varepsilon_N(\pmb\sigma,\mathbf{B})\lambda^N+\cdots+
\varepsilon_1(\pmb\sigma,\mathbf{B})\lambda+\varepsilon_0(\pmb\sigma,\mathbf{B}),
$$
where $k \leq N$, $(\pmb\sigma,\mathbf{B})$ is sufficiently close to $(\pmb\omega,\mathbf{A})$,
and the value of $\varepsilon_i(\pmb\sigma,\mathbf{B})\in\mathbb{C}$ is dependent on
$(\pmb\sigma,\mathbf{B}) \in \Omega_N^{\mathbb{C}} \times \mathcal A^{\mathbb{C}}$, $0\leq i\leq N$.
Since the boundary set $\partial \bar{R}$ is a compact subset of $\mathbb{C}$ and $\Gamma_{(\omega,\mathbf{A})}(\lambda)\neq0$ for all $\lambda\in \partial \bar{R}$,
there exists $\lambda_{0}\in \partial \bar{R}$ such that
\begin{equation}\label{315}
\inf\limits_{\lambda\in\partial \bar{R}}|\Gamma_{(\pmb\omega,\mathbf{A})}(\lambda)|=|\Gamma_{(\pmb\omega,\mathbf{A})}(\lambda_0)|
=:\eta>0.
\end{equation}
One can choose $\varepsilon>0$ satisfying that
\begin{equation}\label{316}
\varepsilon\cdot\sup\limits_{\lambda\in\partial \bar{R}}(|\lambda|^N+\cdots+|\lambda|+1)<\eta.
\end{equation}
Since
$\Gamma_{(\pmb\sigma,\mathbf{B})}(\lambda)\to \Gamma_{(\pmb\omega,\mathbf{A})}(\lambda)$
uniformly for $\lambda\in \bar{R}$
as $(\pmb\sigma,\mathbf{B})\to(\pmb\omega,\mathbf{A})$, there exists a neighborhood
$\mathcal {U}$ of
$(\pmb\omega,\mathbf {A})$ in $\Omega_N^{\mathbb{C}} \times \mathcal A^{\mathbb{C}}$
 such that $$
|\varepsilon_i(\pmb\sigma,\mathbf{B})|<\varepsilon,\;\;\;\;0\leq i\leq N,$$
which, together with \eqref{316}, yields that $$
|\varepsilon_N(\pmb\sigma,\mathbf{B})\lambda^N+\cdots+
\varepsilon_1(\pmb\sigma,\mathbf{B})\lambda+\varepsilon_0(\pmb\sigma,\mathbf{B})|<\eta,
\;\;\;\;\;\lambda\in\partial \bar{R}.$$
Therefore, we have by \eqref{315} that  $$
|\Gamma_{(\pmb\omega,\mathbf{A})}(\lambda)|>|\varepsilon_N(\pmb\sigma,\mathbf{B})
\lambda^N+\cdots+
\varepsilon_1(\pmb\sigma,\mathbf{B})\lambda+\varepsilon_0(\pmb\sigma,\mathbf{B})|,
\;\;\;\;\;\lambda\in\partial \bar{R}.$$
By Rouche's Theorem in complex analysis, $\Gamma_{(\pmb\sigma,\mathbf
B)}(\lambda)$ and $\Gamma_{(\pmb\omega,\mathbf{A})}(\lambda)$ have the
same number of zeros in $R$, counting order. Therefore, the SLP $(\pmb\sigma,\mathbf{B}) \in \mathcal{U}$ has exactly $m$ eigenvalues in
$\bar{R}$, counting multiplicity, and they all lie in
$R$. This proof is complete.\end{proof}

The following
result is a direct consequence of Theorem \ref{th34}.

\begin{cor}\label{c32} For each $m \in \mathbb N$, the set of discrete
SLPs having at least $m$
eigenvalues, counting multiplicity, is open in $\Omega_N^{\mathbb C}
\times \mathcal A^{\mathbb C}$.\end{cor}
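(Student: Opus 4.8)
The plan is to derive the statement directly from Theorem~\ref{th34} by a standard ``isolate the eigenvalues, then intersect finitely many neighborhoods'' argument. Fix $m\in\mathbb N$ and a discrete SLP $(\pmb\omega,\mathbf A)\in\Omega_N^{\mathbb C}\times\mathcal A^{\mathbb C}$ having at least $m$ eigenvalues, counting multiplicity; the goal is to produce a neighborhood of $(\pmb\omega,\mathbf A)$ all of whose members have this property. By Lemma~\ref{l32}, $(\pmb\omega,\mathbf A)$ either has only finitely many eigenvalues or has every complex number as an eigenvalue. I would treat the first (main) case now, and the second at the end.

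So suppose $(\pmb\omega,\mathbf A)$ has the distinct eigenvalues $\lambda_1,\dots,\lambda_k$, with multiplicities $m_1,\dots,m_k$ and $m_1+\cdots+m_k\ge m$. First I would pick pairwise disjoint bounded open sets $R_1,\dots,R_k\subset\mathbb C$ with $\lambda_j\in R_j$ and $\overline{R_j}$ so small that $\lambda_j$ is the only eigenvalue of $(\pmb\omega,\mathbf A)$ in $\overline{R_j}$; this is possible because in this case $\Gamma_{(\pmb\omega,\mathbf A)}$ is a nonzero polynomial, so its zeros are isolated. Applying Theorem~\ref{th34} to each pair $(\lambda_j,R_j)$ gives a neighborhood $\mathcal U_j$ of $(\pmb\omega,\mathbf A)$ in $\Omega_N^{\mathbb C}\times\mathcal A^{\mathbb C}$ such that every problem in $\mathcal U_j$ has exactly $m_j$ eigenvalues, counting multiplicity, inside $R_j$. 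Then $\mathcal U:=\bigcap_{j=1}^k\mathcal U_j$ is still a neighborhood of $(\pmb\omega,\mathbf A)$, and since $R_1,\dots,R_k$ are pairwise disjoint, each problem in $\mathcal U$ has at least $m_1+\cdots+m_k\ge m$ eigenvalues, counting multiplicity. Thus $(\pmb\omega,\mathbf A)$ is interior to the set in question, and that set is open.

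The remaining case is that every complex number is an eigenvalue of $(\pmb\omega,\mathbf A)$, that is, $\Gamma_{(\pmb\omega,\mathbf A)}\equiv0$ for a normalized representative. I expect this to be the main obstacle: Theorem~\ref{th34} cannot be invoked, since no bounded region contains exactly one eigenvalue, and the coefficients of $\Gamma_{(\pmb\sigma,\mathbf B)}$, which depend analytically on $(\pmb\sigma,\mathbf B)$ and all vanish at $(\pmb\omega,\mathbf A)$, can individually tend to zero at different rates under perturbation, so in principle the degree of $\Gamma_{(\pmb\sigma,\mathbf B)}$---hence the number of eigenvalues of a nearby problem---need not stay large. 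I would therefore handle this case by a direct analysis of those coefficients, or else apply the corollary only to problems with finitely many eigenvalues (equivalently, on the open dense subset where $\Gamma\not\equiv0$), where the first two paragraphs already constitute a complete proof.
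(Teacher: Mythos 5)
Your first two paragraphs are exactly the paper's intended argument: the paper offers no proof at all, stating only that the corollary is ``a direct consequence of Theorem~\ref{th34},'' and the isolate-each-eigenvalue-then-intersect-finitely-many-neighborhoods reasoning you give is the standard way to cash that in. So for problems with $\Gamma\not\equiv 0$ your proof is complete and matches the paper.

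Your suspicion about the case $\Gamma_{(\pmb\omega,\mathbf A)}\equiv 0$ is not a gap in your argument but a genuine gap in the statement, which the paper silently ignores. Such problems exist, and the corollary literally fails at them. For instance, take $N=2$, $f_0=f_1=1$, $q_1=q_2=0$, $w_1=w_2=1$, so that $\Phi_2(\lambda)=\left(\begin{smallmatrix}1-\lambda & 2-\lambda\\ -2\lambda+\lambda^2 & 1-3\lambda+\lambda^2\end{smallmatrix}\right)$, and the BC $[A\,|\,I]$ with $A=\left(\begin{smallmatrix}0&0\\1&1\end{smallmatrix}\right)$. Then the second row of $A+\Phi_2(\lambda)$ is $(1-\lambda)$ times the first, so $\Gamma\equiv 0$ and every $\lambda\in\mathbb C$ is an eigenvalue; but replacing $A$ by $A_t=\left(\begin{smallmatrix}0&0\\1+t&1+t\end{smallmatrix}\right)$ gives $\Gamma_t\equiv -t$, a problem with no eigenvalues arbitrarily close to the original. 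Hence the set in question is not open at such a point, and the corollary must be read (as Theorem~\ref{th34}'s hypotheses already implicitly require) as restricted to problems with only finitely many eigenvalues — precisely the fallback you propose in your last paragraph.
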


Combining the reality of the eigenvalues for a self-adjoint discrete SLP and Theorem \ref{th34} yields the following result:

\begin{cor} \label{c33} Let $\lambda_* \in \mathbb{R}$ be an eigenvalue of a
discrete SLP $(\mathbf\omega,\mathbf {A}) \in
\Omega_N^{\mathbb{R,+}} \times \mathcal{B}^{\mathbb{C}}$ with multiplicity $m$,
$(r_1,r_2)$ a bounded open interval of $\mathbb{R}$ such that $\lambda_*\in (r_1,r_2)$, and $\lambda_*$ the only
eigenvalue of $(\mathbf\omega,\mathbf{A})$ in the close interval $[r_1,r_2]$.
Then, there is a neighborhood $\mathcal{U}$ of
$(\mathbf\omega,\mathbf {A})$ in $\Omega_N^{\mathbb{R,+}} \times \mathcal B^{\mathbb{C}}$
such that each problem in $\mathcal U$ has exactly $m$ eigenvalues
in $[r_1,r_2]$, counting multiplicity, and they all lie in $(r_1,r_2)$.\end{cor}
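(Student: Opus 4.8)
The plan is to derive this as a direct consequence of Theorem~\ref{th34}, the only extra ingredient being the reality of the spectrum of a self-adjoint problem (Lemma~\ref{l34}). The crux is the choice of the bounded open set $R\subset\mathbb{C}$ to which Theorem~\ref{th34} is applied: it must be genuinely two-dimensional (so that the Rouch\'e argument inside the proof of Theorem~\ref{th34} goes through) yet pinch down on the real axis to exactly the prescribed intervals. The natural choice is the open disk with diameter $[r_1,r_2]$, namely $R:=\{z\in\mathbb{C}:|z-(r_1+r_2)/2|<(r_2-r_1)/2\}$, for which $\bar R\cap\mathbb{R}=[r_1,r_2]$, $R\cap\mathbb{R}=(r_1,r_2)$, and $\lambda_*\in(r_1,r_2)\subset R$. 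First I would verify that $(\pmb\omega,\mathbf A)$, regarded as a point of $\Omega_N^{\mathbb{C}}\times\mathcal A^{\mathbb{C}}$ (recall $\Omega_N^{\mathbb{R},+}\subset\Omega_N^{\mathbb{C}}$ and $\mathcal B^{\mathbb{C}}\subset\mathcal A^{\mathbb{C}}$), satisfies the hypotheses of Theorem~\ref{th34} with this $R$: by Lemma~\ref{l34} the problem has only finitely many eigenvalues and all of them are real, so the only eigenvalue of $(\pmb\omega,\mathbf A)$ in $\bar R$ is the unique real eigenvalue in $\bar R\cap\mathbb{R}=[r_1,r_2]$, which is $\lambda_*$, of multiplicity $m$ by hypothesis.

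Theorem~\ref{th34} then yields a neighborhood $\mathcal U_0$ of $(\pmb\omega,\mathbf A)$ in $\Omega_N^{\mathbb{C}}\times\mathcal A^{\mathbb{C}}$ such that every problem in $\mathcal U_0$ has exactly $m$ eigenvalues in $\bar R$, counting multiplicity, all lying in $R$. Next I would set $\mathcal U:=\mathcal U_0\cap(\Omega_N^{\mathbb{R},+}\times\mathcal B^{\mathbb{C}})$, which is a neighborhood of $(\pmb\omega,\mathbf A)$ in $\Omega_N^{\mathbb{R},+}\times\mathcal B^{\mathbb{C}}$ since the latter carries the subspace topology. Any $(\pmb\sigma,\mathbf B)\in\mathcal U$ is a self-adjoint problem, so Lemma~\ref{l34} applies to it as well and all its eigenvalues are real; hence its $m$ eigenvalues in $\bar R$ actually lie in $\bar R\cap\mathbb{R}=[r_1,r_2]$, and being also in $R$ they lie in $R\cap\mathbb{R}=(r_1,r_2)$. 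Finally, since $[r_1,r_2]\subset\bar R$ and the whole spectrum of $(\pmb\sigma,\mathbf B)$ is real, the eigenvalues of $(\pmb\sigma,\mathbf B)$ in $[r_1,r_2]$ coincide (as a multiset, counting multiplicity) with those in $\bar R$, so there are exactly $m$ of them. This is the assertion.

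I do not expect a real obstacle here; the argument is essentially bookkeeping on top of Theorem~\ref{th34}. The one point that deserves care is exactly the one flagged above: one must pick $R$ whose closure and interior trace out precisely $[r_1,r_2]$ and $(r_1,r_2)$ on $\mathbb{R}$, and then use the reality of the spectrum twice --- once to check that $\lambda_*$ is the unique eigenvalue of $(\pmb\omega,\mathbf A)$ in $\bar R$, and once to force the perturbed eigenvalues produced by Theorem~\ref{th34} back onto the real segment $(r_1,r_2)$. The open disk on the diameter $[r_1,r_2]$ accomplishes both.
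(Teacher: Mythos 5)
Your proposal is correct and follows exactly the route the paper intends: the paper justifies Corollary~\ref{c33} with the single remark that it follows by ``combining the reality of the eigenvalues for a self-adjoint discrete SLP and Theorem~\ref{th34},'' and your choice of $R$ as the open disk with diameter $[r_1,r_2]$ together with the two uses of Lemma~\ref{l34} is precisely the bookkeeping that remark leaves implicit.
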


Based on the above discussion, we now form the continuous eigenvalue branches over $\Omega_N^{\mathbb C} \times
\mathcal A^{\mathbb C}$ or $\Omega_N^{\mathbb R,+} \times\mathcal B^{\mathbb C}$ through a fixed eigenvalue.\medskip

\begin{thm}\label{th35}
\begin{itemize}
\item[{\rm (1)}]
 Let $\lambda_*$ be a simple eigenvalue for a
discrete SLP $(\pmb\omega,\mathbf A) \in \Omega_N^{\mathbb C} \times
\mathcal A^{\mathbb C}$. Then there is a continuous
function $\Lambda: \mathcal M \to \mathbb C$ defined on a connected
neighborhood $\mathcal M$ of $(\pmb\omega,\mathbf A)$ in
$\Omega_N^{\mathbb C} \times \mathcal A^{\mathbb C}$ such that
\begin{itemize}\vspace{0.1cm}
\item[{\rm (i)}] $\Lambda(\pmb\omega,\mathbf A)=\lambda_*$;\vspace{0.1cm}
\item[{\rm (ii)}] for any $(\pmb\sigma,\mathbf B) \in \mathcal M$,\vspace{0.1cm}
$\Lambda(\pmb\sigma,\mathbf B)$ is a simple eigenvalue of
$(\pmb\sigma,\mathbf B)$.
\end{itemize}
\item[{\rm (2)}]
  Let $\lambda_*$ be an eigenvalue of a
self-adjoint discrete SLP $(\pmb\omega,\mathbf A)$ with multiplicity $2$.
Fix a small $\epsilon>0$ such that $\lambda_*$ is the only
eigenvalue of $(\pmb\omega,\mathbf A)$ in the interval
$[\lambda_*-\epsilon, \lambda_*+\epsilon]$. Then, there are continuous functions $\Lambda_1$,
$\Lambda_2:\mathcal F \to \mathbb R$ defined on a connected neighborhood $\mathcal F $ of $(\pmb\omega,\mathbf A)$
 in $\Omega_N^{\mathbb R,+} \times\mathcal B^{\mathbb C}$ such that
\begin{itemize}\vspace{0.1cm}
\item[{\rm (i)}] $\Lambda_1(\pmb\omega,\mathbf A) =\Lambda_2(\pmb\omega,\mathbf A)
=\lambda_*$;\vspace{0.1cm}
\item[{\rm (ii)}] $\lambda_*-\epsilon <\Lambda_1(\pmb\sigma,\mathbf B) \leq
\Lambda_2(\pmb\sigma,\mathbf B) <\lambda_*+\epsilon$ for each
$(\pmb\sigma,\mathbf B) \in \mathcal F$;\vspace{0.1cm}
\item[{\rm (iii)}] for every $(\pmb\sigma,\mathbf B) \in \mathcal F$,
$\Lambda_1(\pmb\sigma,\mathbf B)$ and
$\Lambda_2(\pmb\sigma,\mathbf B)$ are eigenvalues of
$(\pmb\sigma,\mathbf B)$.
\end{itemize}
\end{itemize}
\end{thm}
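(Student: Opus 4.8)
The plan is to treat the two parts separately, each via the machinery already set up. For part (1), the eigenvalue $\lambda_*$ is a simple zero of the characteristic polynomial $\Gamma_{(\pmb\omega,\mathbf A)}$. First I would choose a bounded open disc $R\subset\mathbb C$ centered at $\lambda_*$ so small that $\lambda_*$ is the only eigenvalue of $(\pmb\omega,\mathbf A)$ in $\bar R$; this is possible because $\Gamma_{(\pmb\omega,\mathbf A)}$ has only finitely many zeros (Lemma \ref{l32}), unless it is identically zero, a case we may exclude since $\lambda_*$ is assumed simple. Then Theorem \ref{th34} with $m=1$ gives a neighborhood $\mathcal U$ of $(\pmb\omega,\mathbf A)$ such that every $(\pmb\sigma,\mathbf B)\in\mathcal U$ has exactly one eigenvalue in $\bar R$, counting multiplicity, and it lies in $R$. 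Define $\Lambda(\pmb\sigma,\mathbf B)$ to be that unique eigenvalue, and let $\mathcal M$ be a connected subneighborhood of $\mathcal U$ (e.g. an open coordinate ball). Property (i) is immediate, and (ii) follows because the eigenvalue has multiplicity $1$ in $\bar R$. For continuity, I would argue that the map $(\pmb\sigma,\mathbf B)\mapsto\Gamma_{(\pmb\sigma,\mathbf B)}(\lambda)$ is jointly continuous (its coefficients depend continuously on the data, by Lemma \ref{l33}, \eqref{34}, and Lemma \ref{l35} together with the normalized-form computation inside a fixed chart $\mathcal N$ of \eqref{22}), and then use the standard fact that a simple root of a polynomial depends continuously on the coefficients; alternatively, a Rouché argument on arbitrarily small discs around $\Lambda(\pmb\sigma_0,\mathbf B_0)$ gives continuity at each point of $\mathcal M$ directly.

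For part (2), the problem is self-adjoint, so by Lemma \ref{l34} and Theorem \ref{th33} all eigenvalues are real and the analytic and geometric multiplicities coincide; in particular $\lambda_*$ has analytic multiplicity $2$. Fix $\epsilon>0$ as in the statement, so $\lambda_*$ is the only eigenvalue of $(\pmb\omega,\mathbf A)$ in $[\lambda_*-\epsilon,\lambda_*+\epsilon]$. Apply Corollary \ref{c33} with $m=2$, $(r_1,r_2)=(\lambda_*-\epsilon,\lambda_*+\epsilon)$: there is a neighborhood $\mathcal U$ of $(\pmb\omega,\mathbf A)$ in $\Omega_N^{\mathbb R,+}\times\mathcal B^{\mathbb C}$ such that each $(\pmb\sigma,\mathbf B)\in\mathcal U$ has exactly two eigenvalues in $[\lambda_*-\epsilon,\lambda_*+\epsilon]$, counting multiplicity, both lying in the open interval; since the problem is self-adjoint these are real, so I may list them in increasing order as $\mu_1(\pmb\sigma,\mathbf B)\le\mu_2(\pmb\sigma,\mathbf B)$. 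Set $\Lambda_i:=\mu_i$ on a connected subneighborhood $\mathcal F\subset\mathcal U$. Properties (i)--(iii) are then read off: (i) because both eigenvalues of $(\pmb\omega,\mathbf A)$ in the interval equal $\lambda_*$; (ii) and (iii) from Corollary \ref{c33}. Continuity of $\Lambda_1$ and $\Lambda_2$ follows from continuity of $\Gamma_{(\pmb\sigma,\mathbf B)}$ in $(\pmb\sigma,\mathbf B,\lambda)$ together with the fact that the two roots in a small interval, ordered, vary continuously: for any $\delta\in(0,\epsilon)$ a Rouché-type count on the two subintervals $[\lambda_*-\epsilon,\Lambda_1(\pmb\sigma_0,\mathbf B_0)\pm\delta]$-type sets shows the ordered roots cannot jump.

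I expect the main obstacle to be the rigorous continuity argument in part (2) when the two branches cross. Away from crossings each $\Lambda_i$ is a simple root and continuity is routine as in part (1); but at a parameter where $\Lambda_1=\Lambda_2$, one must argue that the \emph{minimum} and \emph{maximum} of the two-point set of roots-in-the-interval depend continuously on the data even though the individual roots may not be smooth there. The clean way is: for fixed $(\pmb\sigma_0,\mathbf B_0)\in\mathcal F$ and any $\delta>0$, apply Corollary \ref{c33} (or a direct Rouché estimate on $\Gamma$) at $(\pmb\sigma_0,\mathbf B_0)$ with the shrunken interval $[\Lambda_1(\pmb\sigma_0,\mathbf B_0)-\delta,\Lambda_2(\pmb\sigma_0,\mathbf B_0)+\delta]$ and with the two ``gap'' intervals $[\lambda_*-\epsilon,\Lambda_1(\pmb\sigma_0,\mathbf B_0)-\delta]$ and $[\Lambda_2(\pmb\sigma_0,\mathbf B_0)+\delta,\lambda_*+\epsilon]$, each of which contains no eigenvalue of $(\pmb\sigma_0,\mathbf B_0)$; Theorem \ref{th34} then forces, for $(\pmb\sigma,\mathbf B)$ near $(\pmb\sigma_0,\mathbf B_0)$, both eigenvalues to stay inside $[\Lambda_1(\pmb\sigma_0,\mathbf B_0)-\delta,\Lambda_2(\pmb\sigma_0,\mathbf B_0)+\delta]$, which is exactly $|\Lambda_i(\pmb\sigma,\mathbf B)-\Lambda_i(\pmb\sigma_0,\mathbf B_0)|<\delta$ after reordering. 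The remaining details (that $\Gamma_{(\pmb\sigma,\mathbf B)}$ stays a genuine polynomial of bounded degree with continuously varying coefficients, and that $\mathcal F$ can be taken connected) are exactly the estimates already packaged in Lemmas \ref{l33}, \ref{l35} and the proof of Theorem \ref{th34}.
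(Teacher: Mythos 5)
Your proposal is correct and follows essentially the same route as the paper: part (1) is a direct application of Theorem \ref{th34} with $m=1$, and part (2) uses Corollary \ref{c33} with $m=2$ to define the ordered branches $\Lambda_1\le\Lambda_2$ on a connected neighborhood, with continuity established by the same two-case analysis (a single shrinking interval at a crossing point, and two disjoint intervals around the two simple roots otherwise). The paper's proof is just a more condensed version of exactly these steps.
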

\begin{proof} Assertion (1) can be straightforward shown by Theorem \ref{th34}.

Now, we show assertion (2). By Corollary \ref{c33} there is a neighborhood $\mathcal{F}$ of
$(\mathbf\omega,\mathbf {A})$ in $\Omega_N^{\mathbb{R,+}} \times \mathcal B^{\mathbb{C}}$
such that each problem $(\pmb\sigma,\mathbf B)$ in $\mathcal F$ has exactly $2$ eigenvalues,
which are denoted by $\lambda_1(\pmb\sigma,\mathbf B)$ and $
\lambda_2(\pmb\sigma,\mathbf B)$ with $\lambda_1(\pmb\sigma,\mathbf B)\leq\lambda_2(\pmb\sigma,\mathbf B)$, respectively,
 in $[\lambda_*-\epsilon,\lambda_*+\epsilon]$ and they all lie in $(\lambda_*-\epsilon,\lambda_*+\epsilon)$.
  Note that $\mathcal{F}$ can be chosen such that it belongs to a connected component of $\Omega_N^{\mathbb{R,+}} \times \mathcal B^{\mathbb{C}}$
   by Theorem \ref{th23}.
Then one can define two functions $\Lambda_1$,
$\Lambda_2:\mathcal F \to \mathbb R$ such that $\Lambda_1(\pmb\sigma,\mathbf B)=\lambda_1(\pmb\sigma,\mathbf B)$ and $\Lambda_2(\pmb\sigma,\mathbf B)=\lambda_2(\pmb\sigma,\mathbf B)$. It can be easily verified
that $\Lambda_1$ and $\Lambda_2$ satisfy (i)-(iii).

Next, we prove that $\Lambda_1$,
$\Lambda_2:\mathcal F \to \mathbb R$ are continuous functions. Fix a $(\pmb\sigma,\mathbf B)\in \mathcal F$. If $\Lambda_1(\pmb\sigma,\mathbf B)=\Lambda_2(\pmb\sigma,\mathbf B)$,
 there exists a neighborhood $\mathcal U_{(r_1,r_2)}$ of $(\pmb\sigma,\mathbf B)$ in $\Omega_N^{\mathbb R,+} \times\mathcal B^{\mathbb C}$ such that
$\Lambda_1(\pmb\tau,\mathbf C), \Lambda_2(\pmb\tau,\mathbf C)\in (r_1,r_2)$ for each open interval $(r_1,r_2)$
satisfying $\Lambda_1(\pmb\sigma,\mathbf B)\in(r_1,r_2)\subset[\lambda_*-\epsilon, \lambda_*+\epsilon]$
 and each $(\pmb\tau,\mathbf C)\in\mathcal U_{(r_1,r_2)}$ by Corollary \ref{c33}. Now, we assume that
 $\Lambda_1(\pmb\sigma,\mathbf B)<\Lambda_2(\pmb\sigma,\mathbf B)$. Then, there exists a positive number $\epsilon_1$
 such that $$ \bigcup\limits_{i=1}^{2}(\Lambda_i(\pmb\sigma,\mathbf B)-\epsilon_1,\Lambda_i(\pmb\sigma,\mathbf B)+\epsilon_1)\subset(\lambda_*-\epsilon,\lambda_*+\epsilon) $$ and
\vspace{-0.1cm}$$\bigcap\limits_{i=1}^2(\Lambda_i(\pmb\sigma,\mathbf B)-\epsilon_1,\Lambda_i(\pmb\sigma,\mathbf B)+\epsilon_1)=\emptyset.$$
 By Corollary \ref{c33} there exists a neighborhood $\mathcal U_{\delta}$ of $(\pmb\sigma,\mathbf B)$ in $\Omega_N^{\mathbb R,+} \times\mathcal B^{\mathbb C}$ such that $\Lambda_1(\pmb\tau,\mathbf C)\in(\Lambda_1(\pmb\sigma,\mathbf B)-\delta, \Lambda_1(\pmb\sigma,\mathbf B)+\delta)$, $\Lambda_2(\pmb\tau,\mathbf C)\in(\Lambda_2(\pmb\sigma,\mathbf B)-\delta, \Lambda_2(\pmb\sigma,\mathbf B)+\delta)$ for each $\delta$ satisfying $0<\delta<\epsilon_1$ and each $(\pmb\tau,\mathbf C)\in\mathcal U_{\delta} $. Therefore, $\Lambda_1$ and  $\Lambda_2$ are continuous in $\mathcal{F}$.
This completes the proof.
\end{proof}

\begin{rem}\label{r32}
\begin{itemize}\vspace{-0.2cm}
\item[{\rm (1)}]In the case of general discrete SLP, that is, $(\pmb\omega,\mathbf A)\in\Omega_N^{\mathbb C} \times
\mathcal A^{\mathbb C}$, any two such functions as $\Lambda$ in (1) of Theorem \ref{th35} agree on the common part of their
domains, which is still a neighborhood of $(\pmb\omega,\mathbf A)$ in
$\Omega_N^{\mathbb C} \times \mathcal A^{\mathbb C}$. So, by the
continuous simple eigenvalue branch over $\Omega_N^{\mathbb C} \times
\mathcal A^{\mathbb C}$ through $\lambda_*$, we mean any such function.
In the case of self-adjoint discrete SLP, that is, $(\pmb\omega,\mathbf A)\in\Omega_N^{\mathbb R,+} \times
\mathcal B^{\mathbb C}$, and $\lambda_*$ is an eigenvalue with multiplicity $2$, there are
actually different functions on $\mathcal F$. Locally they are the
only such functions, to be called the continuous eigenvalue
branches over $\Omega_N^{\mathbb R,+} \times \mathcal B^{\mathbb C}$ through
$\lambda_*$.
\item[{\rm (2)}] Statement (1) of Theorem \ref{th35} holds if we replace $\Omega_N^{\mathbb C} \times\mathcal A^{\mathbb C}$
and $\mathbb C$ by $\Omega_N^{\mathbb R,+} \times\mathcal B^{\mathbb C}$ and $\mathbb R$, respectively.
 This gives the continuous simple eigenvalue branch over $\Omega_N^{\mathbb R,+} \times\mathcal B^{\mathbb C}$ through $\lambda_*$.
\item[{\rm (3)}] There are similar results for subspaces of $\Omega_N^{\mathbb C}
\times \mathcal A^{\mathbb C}$, such as $\Omega_N^{\mathbb C} \times \mathcal
A^{\mathbb R}$ and $\Omega_N^{\mathbb R} \times \mathcal A^{\mathbb C}$ to (1) of
Theorem \ref{th35}. There are similar results for subspaces of
$\Omega_N^{\mathbb R,+} \times \mathcal B^{\mathbb C}$, such as
$\Omega_N^{\mathbb R,+} \times \mathcal B^{\mathbb R}$ to (2) of
Theorem \ref{th35} and (2) of Remark \ref{r32}.
\item[{\rm (4)}] The third author of the present paper, together with Lv, obtained the continuous dependence of eigenvalues on problems
under a non-singularity condition (see Theorem3.1 and Corollary 3.1 in \cite{Lv}).
\end{itemize}
\end{rem}
The following result can be directly obtained by Theorem \ref{th32} and (1) of Theorem \ref{th35}.

\begin{cor} \label{c34} Let $\lambda_*$ be a simple eigenvalue
of a discrete SLP $(\pmb\omega,\mathbf
A)\in\Omega_N^{\mathbb C} \times \mathcal A^{\mathbb C}$, and $\Lambda$ the  continuous simple
eigenvalue branch over $\Omega_N^{\mathbb C} \times \mathcal A^{\mathbb C}$
through $\lambda_*$. Then, there exists a connected neighborhood $\mathcal M$ of $(\pmb\omega,\mathbf A)$ such that for each $(\pmb\sigma,\mathbf B) \in
\mathcal M$, $\Lambda(\pmb\sigma,\mathbf B)$ has
geometric multiplicity 1.\end{cor}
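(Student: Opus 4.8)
The plan is to obtain Corollary~\ref{c34} as a formal consequence of the branch construction in part~(1) of Theorem~\ref{th35} together with the multiplicity comparison of Theorem~\ref{th32}. First I would invoke (1) of Theorem~\ref{th35} for the simple eigenvalue $\lambda_*$ of $(\pmb\omega,\mathbf A)$: this provides a connected neighborhood $\mathcal M$ of $(\pmb\omega,\mathbf A)$ in $\Omega_N^{\mathbb C}\times\mathcal A^{\mathbb C}$ and a continuous function $\Lambda:\mathcal M\to\mathbb C$ with $\Lambda(\pmb\omega,\mathbf A)=\lambda_*$ such that, for every $(\pmb\sigma,\mathbf B)\in\mathcal M$, the number $\Lambda(\pmb\sigma,\mathbf B)$ is a simple eigenvalue of $(\pmb\sigma,\mathbf B)$. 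By Remark~\ref{r32} this is precisely the continuous simple eigenvalue branch over $\Omega_N^{\mathbb C}\times\mathcal A^{\mathbb C}$ through $\lambda_*$, so taking $\mathcal M$ to be (a connected neighborhood inside) its domain requires no extra choice.

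Next I would fix an arbitrary $(\pmb\sigma,\mathbf B)\in\mathcal M$ and simply compare the two notions of multiplicity for the eigenvalue $\Lambda(\pmb\sigma,\mathbf B)$. On one hand, since $\Lambda(\pmb\sigma,\mathbf B)$ is an eigenvalue of $(\pmb\sigma,\mathbf B)$, it has a nonzero eigenspace, so its geometric multiplicity is at least $1$ (indeed, by (ii) of Remark~\ref{r31} it is $1$ or $2$). On the other hand, ``simple'' means that the analytic multiplicity of $\Lambda(\pmb\sigma,\mathbf B)$ equals $1$, and Theorem~\ref{th32} guarantees that the analytic multiplicity of any eigenvalue is greater than or equal to its geometric multiplicity. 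Combining these, the geometric multiplicity of $\Lambda(\pmb\sigma,\mathbf B)$ is caught between $1$ and $1$, hence equals $1$; as $(\pmb\sigma,\mathbf B)\in\mathcal M$ was arbitrary, this establishes the corollary.

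I do not expect a genuine obstacle here, since the statement is a direct corollary of the two quoted results; the only point deserving a word of care is that the neighborhood $\mathcal M$ on which $\Lambda$ is defined be taken connected, and this is already built into the conclusion of (1) of Theorem~\ref{th35}, so no additional argument is needed.
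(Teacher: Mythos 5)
Your proposal is correct and follows exactly the route the paper intends: the paper states Corollary~\ref{c34} as a direct consequence of Theorem~\ref{th32} together with part~(1) of Theorem~\ref{th35}, and your argument simply spells out that deduction (the branch gives a simple, i.e.\ analytic-multiplicity-one, eigenvalue on a connected neighborhood, and Theorem~\ref{th32} then forces the geometric multiplicity to be $1$). No gaps.
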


\section{Analyticity, differentiability, and monotonicity}

In this section, we shall investigate analyticity and differentiability
of continuous eigenvalue branches under some assumptions on their
multiplicities, and then study monotonicity of continuous eigenvalue branches
of self-adjoint discrete SLPs on boundary conditions and equations, separately.

\subsection{ Analyticity and differentiability of continuous simple eigenvalue branches }

In this subsection, we shall study the analyticity and differentiability of continuous
simple eigenvalue branches. To do this, we need the following two lemmas (see Theorem 2.1.2 in \cite{Hormander} and Chapter V in \cite{Robinson}, separately):

 \begin{lem}\label{l41} Let $f_j(w,z)$, $1\leq j \leq m$, be analytic functions of $(w,z)=(w_1,\cdots,w_m,$ $z_1,\cdots,z_n)$
 in a neighborhood of a point $(w^0,z^0)$ in $\mathbb C^m\times\mathbb C^n$, and assume that $f_j(w^0,z^0)=0$, $1\leq j \leq m$, and$$
\det(\partial f_j/\partial w_k)_{j,k=1}^m\big|_{(w^0,z^0)}\neq 0.
$$
Then the equations $f_j(w,z)=0$, $1\leq j \leq m$, have a uniquely determined analytic solution $w(z)$ in a neighborhood of $z^0$ such that
$w(z^0)=w^0$. Moreover, the derivative formula in the neighborhood of $z^0$ is determined by
\begin{equation}\label{41}
\sum_{k=1}^{m}({\partial f_{j}}/{\partial w_k})\,dw_k+
\sum_{i=1}^{n}({\partial f_j}/{\partial z_i})\,dz_i=0,\;\;1\leq j \leq m.
\end{equation}
\end{lem}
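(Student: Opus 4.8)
The statement is the holomorphic implicit function theorem, and the plan is to reduce it to the classical (real) implicit function theorem and then upgrade the regularity of the resulting solution from merely smooth to holomorphic by means of the Cauchy--Riemann equations.

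First I would split variables into real and imaginary parts, writing $w_k=u_k+iv_k$ and $z_i=s_i+it_i$, and regard each $f_j$ as a real map into $\mathbb R^2$ via its real and imaginary parts $(\mathrm{Re}\,f_j,\mathrm{Im}\,f_j)$, which are real-analytic (hence $C^1$) functions of the $2m+2n$ real variables $(u,v,s,t)$ because the $f_j$ are holomorphic. A standard computation with the Cauchy--Riemann relations shows that the real $2m\times 2m$ Jacobian determinant of the map $(u,v)\mapsto(\mathrm{Re}\,f_j,\mathrm{Im}\,f_j)_{j=1}^m$ equals $|\det(\partial f_j/\partial w_k)_{j,k=1}^m|^2$, which is nonzero at $(w^0,z^0)$ by hypothesis. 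Hence the real implicit function theorem applies and yields a unique $C^1$ (in fact real-analytic) map $z\mapsto w(z)$ defined near $z^0$ with $w(z^0)=w^0$ and $f_j(w(z),z)\equiv 0$ for $1\le j\le m$.

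Next I would show $w$ is holomorphic. Applying the Wirtinger operator $\partial/\partial\bar z_i$ to the identity $f_j(w(z),z)\equiv 0$ and using the chain rule together with the fact that holomorphy of $f_j$ gives $\partial f_j/\partial\bar w_k\equiv 0$ and $\partial f_j/\partial\bar z_i\equiv 0$, one obtains $\sum_{k=1}^m(\partial f_j/\partial w_k)(\partial w_k/\partial\bar z_i)=0$ for every $i,j$. Shrinking the neighborhood of $z^0$ so that the matrix $(\partial f_j/\partial w_k)_{j,k}$ remains invertible along the graph of $w$ (possible by continuity), this linear system forces $\partial w_k/\partial\bar z_i\equiv 0$ for all $k,i$; thus $w$ is holomorphic near $z^0$, and uniqueness among holomorphic solutions is already contained in the uniqueness from the real theorem. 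For the derivative formula, differentiating $f_j(w(z),z)\equiv 0$ with respect to $z_i$ by the holomorphic chain rule gives $\sum_{k=1}^m(\partial f_j/\partial w_k)(\partial w_k/\partial z_i)+\partial f_j/\partial z_i=0$; rewriting this as the vanishing of the total differential of the composite $z\mapsto f_j(w(z),z)$ produces exactly \eqref{41}, and invertibility of $(\partial f_j/\partial w_k)$ then expresses the $dw_k$ in terms of the $dz_i$.

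The only genuinely nontrivial step is this regularity upgrade — deducing that the a priori merely smooth solution is holomorphic — though even it is routine once one has the Wirtinger calculus and the persistence of invertibility of the Jacobian near $(w^0,z^0)$. A self-contained alternative avoiding the real theorem is a Newton--Kantorovich iteration $w^{(p+1)}(z)=w^{(p)}(z)-J^{-1}f(w^{(p)}(z),z)$ with $J=(\partial f_j/\partial w_k)\big|_{(w^0,z^0)}$: each iterate is holomorphic in $z$, the iteration converges uniformly on a small polydisc about $z^0$ under the usual quantitative estimates, and a uniform limit of holomorphic functions is holomorphic; I would resort to this route only if an entirely elementary argument were wanted.
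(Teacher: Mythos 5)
Your argument is correct: the reduction to the real implicit function theorem via the identity that the real $2m\times 2m$ Jacobian equals $|\det(\partial f_j/\partial w_k)|^2$, followed by the holomorphy upgrade from $\sum_k(\partial f_j/\partial w_k)(\partial w_k/\partial\bar z_i)=0$ and the invertibility of $(\partial f_j/\partial w_k)$ near $(w^0,z^0)$, is exactly the standard proof. The paper does not prove this lemma at all --- it cites it as Theorem 2.1.2 of H\"{o}rmander --- and the proof given there is essentially the one you wrote, so there is nothing to reconcile.
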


\begin{lem}\label{l42} Assume that $U\subset\mathbb{R}^{n+1}$ is an open set and $F:U\rightarrow\mathbb{R}$ is a $C^{r}$ function for some $r\geq 1$.
For $p\in\mathbb{R}^{n+1}$, we write $p=(x,y)$ with $x\in \mathbb{R}^{n}$ and $y\in \mathbb{R}$. Assume that $(x_0,y_0)\in U$ and
$$({\partial F}/{\partial y})(x_0,y_0)\neq 0.$$
Let $C=F(x_0,y_0)\in \mathbb{R}$. Then, there are open sets $V$ containing $x_0$ and $W$ containing $y_0$ with $V\times W\subset U$, and a $C^{r}$ function $h:V\rightarrow W$ such that $h(x_0)=y_0$ and
$$F(x,h(x))=C\;\;\;\;\;\; for\;\; all \;\;x\in V.$$
Further, for each $x\in V$, $h(x)$ is the unique $y\in W$ such that $F(x,y)=C$.\end{lem}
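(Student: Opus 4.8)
My plan is to reduce Lemma \ref{l42} to the Inverse Function Theorem by the standard auxiliary-map trick. Define $G:U\to\mathbb{R}^{n+1}$ by $G(x,y):=(x,F(x,y))$. Since $F$ is $C^{r}$, so is $G$, and its Jacobian at $(x_0,y_0)$ is the block lower-triangular matrix $\begin{pmatrix} I_n & 0 \\ (\partial F/\partial x)(x_0,y_0) & (\partial F/\partial y)(x_0,y_0)\end{pmatrix}$, whose determinant equals $(\partial F/\partial y)(x_0,y_0)\neq 0$. By the $C^{r}$ Inverse Function Theorem there are open sets $\widetilde U\ni(x_0,y_0)$ and $\widetilde V\ni G(x_0,y_0)=(x_0,C)$ with $G|_{\widetilde U}:\widetilde U\to\widetilde V$ a $C^{r}$ diffeomorphism; because $G$ leaves the first $n$ coordinates fixed, its inverse has the form $G^{-1}(x,z)=(x,g(x,z))$ for a $C^{r}$ function $g$ on $\widetilde V$.

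Next I would pass to product neighborhoods: choose open $V\ni x_0$ and $W\ni y_0$ with $V\times W\subset\widetilde U$ and $(x,C)\in\widetilde V$ for all $x\in V$ (possible by continuity), and set $h(x):=g(x,C)$ for $x\in V$. Then $h$ is $C^{r}$ as a composition of $C^{r}$ maps, $h(x_0)=g(x_0,C)=y_0$ since $G(x_0,y_0)=(x_0,C)$, and for $x\in V$ we have $G(x,h(x))=G(G^{-1}(x,C))=(x,C)$, so reading off the last coordinate gives $F(x,h(x))=C$. For uniqueness, if $x\in V$, $y\in W$ and $F(x,y)=C$, then $G(x,y)=(x,C)=G(x,h(x))$ with both points in $\widetilde U$, and injectivity of $G|_{\widetilde U}$ forces $y=h(x)$. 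Shrinking $W$ if needed so that $V\times W\subset U$ completes the argument.

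The only substantive ingredient here is the Inverse Function Theorem in the $C^{r}$ category; granting it, the rest is bookkeeping with neighborhoods, so I expect no real obstacle. If one prefers a self-contained route that bypasses the Inverse Function Theorem, the main work shifts to a smoothness bootstrap: after replacing $F$ by $-F$ if necessary so that $(\partial F/\partial y)(x_0,y_0)=:\mu>0$, one shows that $T_x(y):=y-\mu^{-1}(F(x,y)-C)$ is, on a small enough box $V\times W$ about $(x_0,y_0)$, a uniform contraction in $y$ (because $\partial T_x/\partial y=1-\mu^{-1}\,\partial F/\partial y$ is small there) mapping $W$ into itself (because $T_x(y_0)-y_0=-\mu^{-1}(F(x,y_0)-C)$ is small for $x$ near $x_0$); the Banach fixed point theorem with parameters then yields a continuous $h$ with $F(x,h(x))=C$, and the identity $(\partial h/\partial x_i)(x)=-(\partial F/\partial x_i)(x,h(x))/(\partial F/\partial y)(x,h(x))$, valid once $h$ is known to be differentiable, lets one bootstrap inductively from $h\in C^{0}$ up to $h\in C^{r}$. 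In both approaches the heart of the matter is exactly the existence and regularity of a local inverse, and with that in place Lemma \ref{l42} follows at once.
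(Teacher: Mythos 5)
Your proposal is correct: the reduction to the $C^{r}$ Inverse Function Theorem via the auxiliary map $G(x,y)=(x,F(x,y))$ is the standard proof of this form of the Implicit Function Theorem, and the contraction-mapping alternative you sketch is also sound. Note, however, that the paper does not prove Lemma \ref{l42} at all — it is quoted as a known result from Chapter V of Robinson's book — so there is no in-paper argument to compare against; your derivation is exactly the textbook one (the only loose end is the routine shrinking of $V$ to guarantee $h(V)\subset W$, which you correctly flag as bookkeeping).
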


\begin{thm}\label{th41} Let $\lambda_* \in \mathbb C$ be a simple eigenvalue of a discrete SLP $(\pmb \omega,\mathbf A)\in \Omega_N^{\mathbb C}\times\mathcal A^{\mathbb C}$.
Then, the continuous simple eigenvalue branch $\Lambda$ defined on a neighborhood $\mathcal{F}$ of $(\pmb \omega,\mathbf A)$ in $\Omega_N^{\mathbb C}\times\mathcal A^{\mathbb C}$
 through $\lambda_*$ is analytic.
For a fixed discrete SLE, the derivative of $\Lambda$ at
$\mathbf A=[A\,|\,B]$ is given by
\begin{equation}\label{42}
d\Lambda\big|_{\mathbf A}(H\,|\,L)
=-\sum^2_{j,k=1}(d_{jk}h_{jk}+e_{jk}\l_{jk}) \Big/
  G'(\lambda_*), \;\;\;\;(H\,|\,L) \in {\rm T}_{\mathbf A} \mathcal A^{\mathbb C},
\end{equation}
where the coefficient matrices $D=(d_{jk})$ and $E=(e_{jk})$ are defined by
$$
D:=\begin{pmatrix} a_{22} & -a_{21} \\ -a_{12} & a_{11} \end{pmatrix}
  +\begin{pmatrix} b_{22} & -b_{21} \\ -b_{12} & b_{11}
  \end{pmatrix}
  \begin{pmatrix} f_N\Delta\psi_N(\lambda_*) & -f_N\Delta\phi_N(\lambda_*)
         \\ -\psi_N(\lambda_*) & \phi_N(\lambda_*) \end{pmatrix},
$$
\vspace{-0.1cm}$$
E:=\begin{pmatrix} b_{22} & -b_{21} \\ -b_{12} & b_{11}
\end{pmatrix}
  +\begin{pmatrix} a_{22} & -a_{21} \\ -a_{12} & a_{11}
\end{pmatrix}
   \begin{pmatrix} \phi_N(\lambda_*) & f_N\Delta\phi_N(\lambda_*)
         \\ \psi_N(\lambda_*) & f_N\Delta\psi_N(\lambda_*) \end{pmatrix}.
$$
\end{thm}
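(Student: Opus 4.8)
The plan is to reduce the statement to an application of the analytic implicit function theorem (Lemma~\ref{l41}) applied to the characteristic function $\Gamma$, viewed as a function of the eigenvalue $\lambda$ together with a local coordinate chart on $\Omega_N^{\mathbb C}\times\mathcal A^{\mathbb C}$. First I would fix a canonical coordinate system $\mathcal N$ from \eqref{22} containing $\mathbf A$, say $\mathcal N_{1,2}^{\mathbb C}$ for definiteness, and write every BC near $\mathbf A$ in its normalized form, so that the entries of the coefficient matrix become holomorphic coordinates; together with the $3N+1$ coordinates of $\pmb\omega\in\Omega_N^{\mathbb C}$ these give local complex-analytic coordinates on the manifold of problems. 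By Lemma~\ref{l31}, Lemma~\ref{l33} and the recursion \eqref{33}, $\Gamma(\lambda)=\Gamma_{(\pmb\sigma,\mathbf B)}(\lambda)$ is a polynomial in $\lambda$ whose coefficients are polynomials (hence entire) in all these coordinates; so $F(\lambda;\pmb\sigma,\mathbf B):=\Gamma_{(\pmb\sigma,\mathbf B)}(\lambda)$ is jointly analytic in a neighbourhood of $(\lambda_*;\pmb\omega,\mathbf A)$. Since $\lambda_*$ is a simple eigenvalue, $F(\lambda_*;\pmb\omega,\mathbf A)=0$ and $\partial F/\partial\lambda\,(\lambda_*;\pmb\omega,\mathbf A)=\Gamma'_{(\pmb\omega,\mathbf A)}(\lambda_*)\neq 0$. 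Lemma~\ref{l41} (with $m=1$, $w=\lambda$, $z$ the remaining coordinates) then yields a unique analytic solution $\lambda=\tilde\Lambda(\pmb\sigma,\mathbf B)$ with $\tilde\Lambda(\pmb\omega,\mathbf A)=\lambda_*$, and by Theorem~\ref{th34} this $\tilde\Lambda$ must coincide near $(\pmb\omega,\mathbf A)$ with the continuous simple eigenvalue branch $\Lambda$ of Theorem~\ref{th35}(1), since the latter also picks out the unique eigenvalue in a small neighbourhood of $\lambda_*$. Hence $\Lambda$ is analytic.

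For the derivative formula I would freeze $\pmb\omega$ (so $\phi_N(\lambda_*)$, $\psi_N(\lambda_*)$, $f_N\Delta\phi_N(\lambda_*)$, $f_N\Delta\psi_N(\lambda_*)$ are constants) and differentiate the identity $\Gamma_{(\pmb\omega,\mathbf B)}(\Lambda(\mathbf B))\equiv 0$ using the implicit differentiation formula \eqref{41}. By Lemma~\ref{l33}, $\Gamma(\lambda)=\det A+\det B+G(\lambda)$ with
$$G(\lambda)=c_{11}\phi_N(\lambda)+c_{12}\psi_N(\lambda)+c_{21}f_N\Delta\phi_N(\lambda)+c_{22}f_N\Delta\psi_N(\lambda),$$
$$C=\begin{pmatrix} b_{11} & b_{21}\\ b_{12} & b_{22}\end{pmatrix}\begin{pmatrix} a_{22} & -a_{21}\\ -a_{12} & a_{11}\end{pmatrix}.$$
Differentiating with respect to $\lambda$ gives the denominator $G'(\lambda_*)=\Gamma'_{(\pmb\omega,\mathbf A)}(\lambda_*)$ (the $\det A+\det B$ term is $\lambda$-independent). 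For the numerator I would compute $\partial\Gamma/\partial a_{jk}$ and $\partial\Gamma/\partial b_{jk}$ at $\mathbf A$, evaluated at $\lambda=\lambda_*$: each such partial is linear in the fixed transfer-matrix entries $\Phi_N(\lambda_*)$ together with the entries of $A$ and $B$, and organizing the $\partial(\det A+\det B+G(\lambda_*))/\partial a_{jk}$'s into a $2\times 2$ matrix produces exactly $D$, while the $b_{jk}$-partials produce $E$ — this is where the two displayed matrices come from. Then \eqref{41} reads $G'(\lambda_*)\,d\Lambda+\sum_{j,k}(d_{jk}\,da_{jk}+e_{jk}\,db_{jk})=0$, and since a tangent vector $(H\,|\,L)\in{\rm T}_{\mathbf A}\mathcal A^{\mathbb C}$ (as described in \eqref{23}) is recorded by $da_{jk}=h_{jk}$, $db_{jk}=l_{jk}$ in the chart, solving for $d\Lambda$ gives \eqref{42}.

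Two bookkeeping points deserve care. One is chart-independence: the formula \eqref{42} should not depend on which $\mathcal N_{i,j}^{\mathbb C}$ was chosen, which follows because $\Gamma$ is only defined up to a nonzero scalar, and that scalar cancels between numerator and denominator; one can also verify directly that $d\Lambda\big|_{\mathbf A}(H\,|\,L)$ is unchanged under the $GL(2,\mathbb C)$-action that relates normalized forms in overlapping charts. The other is the identification of the matrices $D$ and $E$: here one must carefully expand $\det(A+B\Phi_N(\lambda_*))$ and track the cofactor structure, using that $\partial\det M/\partial m_{jk}$ is the $(j,k)$ cofactor of $M$, together with $\det\Phi_N(\lambda_*)=1$ from \eqref{35}. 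I expect this cofactor bookkeeping — matching the raw partials against the precise entry pattern of $D$ and $E$ (including signs, and the way the adjugate $\begin{pmatrix}a_{22}&-a_{21}\\-a_{12}&a_{11}\end{pmatrix}$ and the two reorderings of $\Phi_N(\lambda_*)$ enter) — to be the main obstacle; everything else is a direct citation of Lemma~\ref{l41}, Lemma~\ref{l33}, and Theorem~\ref{th34}. The analyticity half is short and essentially automatic once the joint analyticity of $\Gamma$ in all coordinates is noted.
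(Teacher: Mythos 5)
Your proposal is correct and follows essentially the same route as the paper: analyticity via Lemma \ref{l41} applied to the jointly polynomial characteristic function $\Gamma$ in a canonical chart, using $\Gamma'(\lambda_*)=G'(\lambda_*)\neq 0$ from simplicity, and the derivative formula via implicit differentiation \eqref{41} with the partials of $\det(A+B\Phi_N(\lambda_*))$ organized into $D$ and $E$. The only cosmetic difference is that the paper carries out the derivative computation explicitly in $\mathcal N_{1,2}^{\mathbb C}$ (where the $a_{jk}$ are frozen by normalization, so only the $E$-part appears) and declares the other charts analogous, whereas you treat both families of partials at once.
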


\begin{proof} For the fixed problem $(\pmb \omega,\mathbf A)$, we assume that
$$\pmb \omega=(1/f,q,w)\in \Omega_N^{\mathbb C},\;\; \;\;\mathbf A =\left [\begin{array} {llll}1&0&b_{11}&b_{12}\\
0&1&b_{21}&b_{22}\end{array}  \right ]\in \mathcal N_{1,2}^{\mathbb{C}}.$$
 For all BCs in
$\mathcal N_{1,2}^{\mathbb{C}}$, we compute $\Gamma$ using the
corresponding normalized form of the coefficient matrices of the
BCs.
Define
$$\begin{array}{cccc}\tilde \varphi:&\Omega_N^{\mathbb C}\times\mathcal N_{1,2}^{\mathbb{C}}&\rightarrow&\mathbb{C}^{3N+5},\end{array}$$
$$\begin{array}{cccc}(1/f',q',w')\times\left [\begin{array} {llll}1&0&b'_{11}&b'_{12}\\
0&1&b'_{21}&b'_{22}\end{array}  \right ]&\mapsto&(1/f',q',w',b'_{11}, b'_{12}, b'_{21}, b'_{22}).\end{array}
$$
Then $\tilde \varphi$ is a coordinate chart on $\Omega_{N}^{\mathbb C}\times\mathcal A^{\mathbb C}$.
For convenience, we set
 $$\mathcal{V}={\tilde \varphi}\left((\Omega_N^{\mathbb C}\times\mathcal N_{1,2}^{\mathbb{C}})
 \cap\mathcal{F}\right), $$
  $$\vspace{-0.1cm}\mathcal{K}:=\{(\lambda,1/f',q',w',b'_{11}, b'_{12}, b'_{21}, b'_{22}):\lambda\in\mathbb{C},(1/f',q',w',b'_{11}, \cdots, b'_{22})\in\mathcal{V}\}.$$

Now, consider $\Gamma$ restricted to the region $\mathcal{K}$.
By \eqref{33} and Lemma \ref{l32}, $\Gamma$ is a polynomial and hence an analytic function of all variables in $\mathcal{K}$.
Since $\lambda_*=\Lambda(\pmb \omega,\mathbf A)$ is simple, we have that\begin{equation}\label{43}
\Gamma'(\lambda_*)=G'(\lambda_*) \neq 0.
\end{equation}
 By Lemma \ref{l41} $\Lambda\tilde \varphi^{-1}$
 is exactly the uniquely determined analytic solution to the equation
$$ \Gamma(\lambda) =0$$
on $\lambda$ in a neighborhood of $(1/f,q,w,b_{11},b_{12},$ $b_{21},b_{22})$ such that
 $$\Lambda\tilde \varphi^{-1}(1/f,q,w,b_{11},b_{12}, b_{21}, b_{22})=\lambda_*.$$ Hence, $\Lambda$  is analytic at $(\pmb \omega,\mathbf A)$. If we replace $(\pmb \omega,\mathbf A)$ by $(\pmb \sigma,\mathbf B)\in\mathcal{F}$,
a similar argument above  yields that $\Lambda$ is analytic at $(\pmb \sigma,\mathbf B)$.
Therefore, $\Lambda$  is analytic in the neighborhood $\mathcal{F}$ of $(\pmb \omega,\mathbf A)$.

Fix a discrete SLE. For
any $(H\,|\,L) \in \rm T_{\mathbf A} \mathcal A^{\mathbb C}$, applying \eqref{41} to $\Gamma$ in a neighborhood of $(\lambda_*,b_{11},b_{12},b_{21},b_{22})$, one can deduce
$$ G'(\lambda_*)\,d(\Lambda\tilde \varphi^{-1})
+\sum^2_{j,k=1}({\partial\Gamma}/{\partial b_{jk}})\,db_{jk}=0,
$$
where$$
\left(\frac{\partial\Gamma}{\partial b_{jk}}\right)_{2\times2}=\begin{pmatrix} b_{22} & -b_{21} \\ -b_{12} & b_{11}
\end{pmatrix}
  +\begin{pmatrix} 1 & 0 \\ 0 & 1
\end{pmatrix}
   \begin{pmatrix} \phi_N(\lambda_*) & f_N\Delta\phi_N(\lambda_*)
         \\ \psi_N(\lambda_*) & f_N\Delta\psi_N(\lambda_*) \end{pmatrix},$$
which, together with \eqref{43}, implies \eqref{42}.

If $\mathbf A\in \mathcal N_{i,j}^{\mathbb{C}}$, where $1\leq i \leq3,\, 3\leq j\leq4$, $i<j$,
the proof can be completed by a method analogous to that used above. \end{proof}

With the help of Lemma \ref{l42}, one can deduce the following result using the same method above:

\begin{thm}\label{th42} Let $\lambda_* \in \mathbb R$ be a simple eigenvalue of a self-adjoint discrete SLP $(\pmb \omega,\mathbf A)\in \Omega_N^{\mathbb R,+}\times\mathcal B^{\mathbb C}$.
Then, the continuous simple eigenvalue branch $\Lambda$ defined on a neighborhood of $(\pmb \omega,\mathbf A)$ in $\Omega_N^{\mathbb R,+}\times\mathcal B^{\mathbb C}$
 through $\lambda_*$ is a $C^{\infty}$ function.\end{thm}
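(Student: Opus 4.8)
The plan is to transcribe the proof of Theorem~\ref{th41} into real coordinates, using the $C^{r}$ implicit function theorem (Lemma~\ref{l42}) with $r=\infty$ in place of the analytic one. First I would fix one of the coordinate systems of $\mathcal{B}^{\mathbb{C}}$ from \eqref{26} containing $\mathbf A$, say $\mathcal{O}_{1,3}^{\mathbb{C}}$, so that, together with the trivial chart on the factor $\Omega_N^{\mathbb{R},+}\subset\mathbb{R}^{3N+1}$, the problem $(\pmb\omega,\mathbf A)$ is described by real local coordinates $(1/f,q,w,a_{12},a,b,b_{22})\in\mathbb{R}^{3N+5}$ with $z=a+ib$. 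Using the normalized coefficient matrix of the BC in this chart, I would express the characteristic function as a function $F$ of the extra real variable $\lambda$ together with these $3N+5$ real parameters, defined on an open subset of $\mathbb{R}^{3N+6}$. By Lemma~\ref{l32} and \eqref{33}, $F$ is a polynomial in all of its arguments, hence $C^{\infty}$ (indeed real-analytic).

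The genuinely new point, compared with Theorem~\ref{th41}, is that $F$ is \emph{real}-valued: for real $\lambda$ and real parameters one has $\Gamma(\lambda)\in\mathbb{R}$. Indeed, the transfer matrix $\Phi_N(\lambda)$ has real entries when $\lambda$ is real, since the recursion \eqref{33} has real coefficients for real $f$, $q$, $w$; and the identity $\det\Phi_N(\lambda)=1$ from \eqref{35} cancels the only potentially non-real contribution to $\det(A+B\Phi_N(\lambda))$. For instance, in $\mathcal{O}_{1,3}^{\mathbb{C}}$ a direct computation gives $\Gamma(\lambda)=2a-\psi_N(\lambda)+b_{22}f_N\Delta\psi_N(\lambda)+(a^{2}+b^{2})f_N\Delta\phi_N(\lambda)+a_{12}\phi_N(\lambda)-a_{12}b_{22}f_N\Delta\phi_N(\lambda)$, which is real, and the same check works in $\mathcal{O}_{1,4}^{\mathbb{C}}$, $\mathcal{O}_{2,3}^{\mathbb{C}}$, $\mathcal{O}_{2,4}^{\mathbb{C}}$; alternatively this reality can be read off from the self-adjointness relation $AEA^{*}=BEB^{*}$, or from Lemma~\ref{l34}.

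Now the argument closes as in Theorem~\ref{th41}. Since $\lambda_{*}=\Lambda(\pmb\omega,\mathbf A)$ is a simple eigenvalue, it is a simple zero of $\Gamma$, so $\partial F/\partial\lambda$ at $\lambda_{*}$ and the base parameters equals $\Gamma'(\lambda_{*})\neq 0$, a real non-zero number. Lemma~\ref{l42} (with $n=3N+5$ and $r=\infty$) then produces a $C^{\infty}$ function $h$ of the $3N+5$ parameters near the base point, with $h=\lambda_{*}$ there, $F(\,\cdot\,,h(\,\cdot\,))\equiv 0$, and $h$ locally the unique such solution. Transported back through the coordinate chart, $h$ is a $C^{\infty}$, real-valued function on a neighborhood of $(\pmb\omega,\mathbf A)$ whose values are zeros of $\Gamma$, that is, eigenvalues; by the local uniqueness of continuous simple eigenvalue branches noted in Theorem~\ref{th35} and in (1)--(2) of Remark~\ref{r32}, it agrees with $\Lambda$ near $(\pmb\omega,\mathbf A)$, so $\Lambda$ is $C^{\infty}$ there. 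As $(\pmb\omega,\mathbf A)$ was an arbitrary point of the domain of $\Lambda$ and a suitable chart $\mathcal{O}_{i,j}^{\mathbb{C}}$ is available at each point of $\mathcal{B}^{\mathbb{C}}$, $\Lambda$ is $C^{\infty}$ on its whole domain. The only delicate ingredient is the reality of $\Gamma$ established in the previous paragraph: without it, $\partial F/\partial\lambda$ would be a complex quantity and $h$ need not be real-valued, so Lemma~\ref{l42} could not be applied; once this chart-by-chart check is done, everything else is a routine transcription. One could also avoid the computation entirely by restricting the analytic branch of Theorem~\ref{th41} along the real-analytic embedding $\Omega_N^{\mathbb{R},+}\times\mathcal{B}^{\mathbb{C}}\hookrightarrow\Omega_N^{\mathbb{C}}\times\mathcal{A}^{\mathbb{C}}$, which is real-valued by Lemma~\ref{l34}, hence coincides with $\Lambda$ and is real-analytic, a fortiori $C^{\infty}$.
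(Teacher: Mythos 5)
Your proof is correct and takes essentially the same route as the paper, which establishes Theorem~\ref{th42} by invoking Lemma~\ref{l42} (the $C^{r}$ implicit function theorem) ``using the same method'' as in Theorem~\ref{th41}. Your chart-by-chart verification that $\Gamma$ is real-valued on $\Omega_N^{\mathbb{R},+}\times\mathcal{O}_{i,j}^{\mathbb{C}}$ for real $\lambda$ (the one hypothesis of Lemma~\ref{l42} the paper leaves implicit) is accurate --- your expression for $\Gamma$ in $\mathcal{O}_{1,3}^{\mathbb{C}}$ agrees with what Lemma~\ref{l33} gives --- and the rest is the same transcription the paper intends.
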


\begin{rem}\label{r41} If $\lambda_* \in \mathbb R$ is an eigenvalue of $(\pmb \omega,\mathbf A)\in \Omega_N^{\mathbb R,+}\times\mathcal B^{\mathbb C}$ with multiplicity 2, the continuous eigenvalue branch $\Lambda$ defined on a neighborhood of $(\pmb \omega,\mathbf A)$ in $\Omega_N^{\mathbb R,+}\times\mathcal B^{\mathbb C}$
 through $\lambda_*$ is not necessarily differentiable. Please see Examples \ref{e54}-\ref{e57} for illustration.\end{rem}

\subsection{ Monotonicity on boundary conditions of continuous eigenvalue branches of self-adjoint discrete SLPs}

In this subsection, we shall investigate monotonicity of continuous simple eigenvalue branches on boundary conditions in several subsets of $\mathcal B^{\mathbb C}$ for  self-adjoint discrete SLPs using the derivative formulas of  continuous simple eigenvalue
branches with respect to the corresponding BC.

\begin{lem}\label{l43} Let $\lambda_*$ be a simple eigenvalue
of a discrete SLP $(\pmb\omega,\mathbf
A) \in \Omega_N^{\mathbb C} \times \mathcal A^{\mathbb C}$, and $\Lambda$ the continuous simple
eigenvalue branch defined on a neighborhood $\mathcal{U}$ of $(\pmb\omega,\mathbf
A)$ in $\Omega_N^{\mathbb C} \times \mathcal A^{\mathbb C}$
through $\lambda_*$. Then, there is a continuous choice
$u(\pmb\sigma,\mathbf B) \in l[0,N+1]$ of
eigenfunction corresponding to $\Lambda(\pmb\sigma,\mathbf B)$ for all
$(\pmb\sigma,\mathbf B) \in \Omega_N^{\mathbb C} \times \mathcal A^{\mathbb C}$
sufficiently close to $(\pmb\omega,\mathbf A)$. Here, the continuity of
$u(\pmb\sigma,\mathbf B)$ means that for each
$(\pmb\tau,\mathbf C)\in \Omega_N^{\mathbb C} \times \mathcal A^{\mathbb C}$ sufficiently close to $(\pmb\omega,\mathbf A)$,
$$
u(\pmb\sigma,\mathbf B) \to u(\pmb\tau,\mathbf C)\;\;\; in\;\;\;
\mathbb C^{N+2} $$
as $(\pmb\sigma,\mathbf B) \to (\pmb\tau,\mathbf C)$ in
$\Omega_N^{\mathbb C} \times \mathcal A^{\mathbb C}$.\end{lem}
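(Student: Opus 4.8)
The plan is to construct $u$ by choosing, continuously in $(\pmb\sigma,\mathbf B)$, a nonzero vector in the kernel of the $2\times 2$ matrix
$$ M(\pmb\sigma,\mathbf B):=A+B\,\Phi_N(\Lambda(\pmb\sigma,\mathbf B),\pmb\sigma), $$
and then taking $u(\pmb\sigma,\mathbf B)$ to be the solution of the discrete SLE $\pmb\sigma$ whose initial data $(u_0,f_0\Delta u_0)$ is that kernel vector. By \eqref{37}, such a solution is non-trivial and satisfies the BC $\mathbf B$, hence is an eigenfunction of $(\pmb\sigma,\mathbf B)$ for $\Lambda(\pmb\sigma,\mathbf B)$; conversely, $\ker M(\pmb\sigma,\mathbf B)$ is exactly the set of initial data of such eigenfunctions, so its dimension is the geometric multiplicity of $\Lambda(\pmb\sigma,\mathbf B)$.

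First I would fix a canonical coordinate system $\mathcal N$ from \eqref{22} containing $\mathbf A$; as $\mathcal N$ is open in $\mathcal A^{\mathbb C}$, after shrinking $\mathcal U$ the BC of every problem in $\mathcal U$ lies in $\mathcal N$, and we represent it by the normalized coefficient matrix $(A\,|\,B)$, which depends continuously (indeed real-analytically) on the chart coordinates. By Corollary \ref{c34}, shrinking $\mathcal U$ again, $\Lambda(\pmb\sigma,\mathbf B)$ has geometric multiplicity $1$ for all $(\pmb\sigma,\mathbf B)\in\mathcal U$, i.e. $M(\pmb\sigma,\mathbf B)$ has rank exactly $1$ there, so $M(\pmb\sigma,\mathbf B)\neq 0$ and $\det M(\pmb\sigma,\mathbf B)=0$ on $\mathcal U$. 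Since $M(\pmb\omega,\mathbf A)\neq 0$, one of its two rows is nonzero, say the first one (the second-row case being symmetric, with $(-m_{22},m_{21})$ in place of $(-m_{12},m_{11})$ below). By Lemma \ref{l31} and Lemma \ref{l35} the matrix $\Phi_N(\lambda,\pmb\sigma)$ is jointly continuous in $(\lambda,\pmb\sigma)$, and $\Lambda$ is continuous, so the entries $m_{jk}(\pmb\sigma,\mathbf B)$ of $M$ are continuous; after one last shrinking of $\mathcal U$ the first row $(m_{11},m_{12})$ of $M$ stays nonzero on $\mathcal U$. Then
$$ v(\pmb\sigma,\mathbf B):=\begin{pmatrix} -m_{12}(\pmb\sigma,\mathbf B)\\ m_{11}(\pmb\sigma,\mathbf B)\end{pmatrix} $$
is continuous and nowhere zero on $\mathcal U$, and $M(\pmb\sigma,\mathbf B)\,v(\pmb\sigma,\mathbf B)=\bigl(0,\det M(\pmb\sigma,\mathbf B)\bigr)^{\mathrm T}=0$, so $v(\pmb\sigma,\mathbf B)\in\ker M(\pmb\sigma,\mathbf B)$.

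It then remains to put $u(\pmb\sigma,\mathbf B):=v_1\,\phi(\,\cdot\,,\pmb\sigma)+v_2\,\psi(\,\cdot\,,\pmb\sigma)$ evaluated at $\lambda=\Lambda(\pmb\sigma,\mathbf B)$ (equivalently, the solution of the SLE $\pmb\sigma$ with $u_0=-m_{12}$, $f_0\Delta u_0=m_{11}$), which is an eigenfunction for $\Lambda(\pmb\sigma,\mathbf B)$ as noted at the outset, and to check continuity. By Lemma \ref{l31}, for $0\le n\le N$ each of $\phi_n,\psi_n,f_n\Delta\phi_n,f_n\Delta\psi_n$ is a polynomial in $\lambda$ whose coefficients depend continuously on $\pmb\sigma$, so composing with the continuous maps $(\pmb\sigma,\mathbf B)\mapsto\Lambda(\pmb\sigma,\mathbf B)$ and $(\pmb\sigma,\mathbf B)\mapsto v(\pmb\sigma,\mathbf B)$ makes $u_n(\pmb\sigma,\mathbf B)$ continuous for $0\le n\le N$; finally $u_{N+1}=u_N+(f_N\Delta u_N)/f_N$ takes care of the last index. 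Hence $u(\pmb\sigma,\mathbf B)\to u(\pmb\tau,\mathbf C)$ in $\mathbb C^{N+2}$ as $(\pmb\sigma,\mathbf B)\to(\pmb\tau,\mathbf C)$, as required. I expect the only genuinely delicate point to be making the kernel-vector choice simultaneously continuous and nonvanishing; this is exactly where simplicity of $\lambda_*$ is used, via Corollary \ref{c34}, which forces $M$ to have constant rank $1$ near $(\pmb\omega,\mathbf A)$, so that a row of $M$ nonzero at the base point stays nonzero nearby and yields a legitimate continuous eigenfunction.
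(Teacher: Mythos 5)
Your proposal is correct and follows essentially the same route as the paper's proof: both form $M(\pmb\sigma,\mathbf B)=A+B\Phi_N(\Lambda(\pmb\sigma,\mathbf B),\pmb\sigma)$, use Corollary \ref{c34} to get constant rank $1$ near $(\pmb\omega,\mathbf A)$, pick the continuous kernel vector built from a row of $M$ that is nonzero at the base point (the paper normalizes to $m_{11}\neq 0$ and takes $(C_1,C_2)=(m_{12},-m_{11})$, which is your $v$ up to sign), and conclude continuity of the resulting eigenfunction from the continuity of $\Phi_N$ and $\Lambda$. Your explicit handling of the normalized chart coordinates and of the index $n=N+1$ only makes details the paper leaves implicit more precise.
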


\begin{proof} Every eigenfunction of the SLP $(\pmb\omega,\mathbf A)$ corresponding to $\Lambda(\pmb\omega,\mathbf A)=\lambda_*$ can be written as
\begin{equation}\label{44}
u(\pmb\omega,\mathbf A)=C_1(\pmb\omega,\mathbf A)\phi(\Lambda(\pmb\omega,\mathbf A))+C_2(\pmb\omega,\mathbf A)\psi(\Lambda(\pmb\omega,\mathbf A)),
\end{equation}
where $C_1(\pmb\omega,\mathbf A)$, $C_2(\pmb\omega,\mathbf A)\in \mathbb C$ are dependent on $(\pmb\omega,\mathbf A)$. Inserting \eqref{44} into \eqref{12}, we get
\begin{equation}\label{45}
(A+B\Phi_N(\Lambda(\pmb\omega,\mathbf A)))\begin{pmatrix} C_1(\pmb\omega,\mathbf A)\\C_2(\pmb\omega,\mathbf A)\end{pmatrix}=0.
\end{equation}
Set
\begin{equation}\label{46}
M(\pmb\omega,\mathbf A)=(m_{ij}(\pmb\omega,\mathbf A))_{2\times2}:=A+B\Phi_N(\Lambda(\pmb\omega,\mathbf A)).
\end{equation}
Since $\lambda_*$ is simple, $\Lambda(\pmb\sigma,\mathbf B)$ is continuous in $\mathcal{U}$ and has geometric multiplicity 1 for each $(\pmb\sigma,\mathbf B)\in \mathcal U$
 by Theorem \ref{th35} and Corollary \ref{c34}, and one has that\begin{equation}\label{47}
\text{ rank} M(\pmb\omega,\mathbf A)=1,
\end{equation}
which implies $ m_{i_0j_0}(\pmb\omega,\mathbf A)\neq0$ for some $1\leq i_0,j_0\leq2$. Without loss of generality, we assume that $ m_{11}(\pmb\omega,\mathbf A)\neq0$.
 By replacing $(\pmb\omega,\mathbf A)$  with $(\pmb\sigma,\mathbf B)\in \mathcal U$ in \eqref{44}-\eqref{47}, the similar equations denoted by $(4.4')-(4.7')$ still hold. Obviously, there exists a neighborhood $ \mathcal {\hat U}$ of $(\pmb\omega,\mathbf A)$ with $ \mathcal {\hat U}\subset\mathcal U$ such that $m_{11}(\pmb\sigma,\mathbf B)\neq0$ for each $(\pmb\sigma,\mathbf B)\in \mathcal {\hat U}$.
It is evident that
$$
C_{1}(\pmb\sigma,\mathbf B)=m_{12}(\pmb\sigma,\mathbf B),\;\;\;\;\;\;C_{2}(\pmb\sigma,\mathbf B)=-m_{11}(\pmb\sigma,\mathbf B),$$
is a solution of $(4.5')$ for each $(\pmb\sigma,\mathbf B)\in \mathcal {\hat U}$ by $(4.7')$. Hence,
$u(\pmb\sigma,\mathbf B)$ defined by $(4.4')$ is an eigenfunction corresponding to $\Lambda(\pmb\sigma,\mathbf B)$ and continuous in $\mathcal{\hat U}$ by the
fact that $M(\pmb\sigma,\mathbf B)$ is continuous in $\mathcal{U}$. This completes the proof.\end{proof}

\begin{rem}\label{r42} If $\Omega_N^{\mathbb C} \times \mathcal A^{\mathbb C}$ is replaced by $\Omega_N^{\mathbb R,+} \times \mathcal B^{\mathbb C}$ in Lemma \ref{l43},
then it still holds for the self-adjoint discrete SLPs. \end{rem}

By Corollary \ref{c31}, all eigenvalues of every self-adjoint SLP with separated BC are simple. With the help of the preceding lemma we can now give related derivative formulas of continuous simple eigenvalue
branch $\Lambda$ with respect to the parameters of the separated self-adjoint BCs. To indicate the dependence of $\Lambda(\mathbf S_{\alpha,\beta})$ on the parameters  $\alpha$ and $\beta$, we sometimes write $\Lambda(\alpha, \beta)=\Lambda(\mathbf S _{\alpha,\beta})$ for $\mathbf S _{\alpha,\beta} \in \mathcal B_S$.

\begin{thm}\label{th43} Assume that $\lambda_*$ is an
eigenvalue of a self-adjoint discrete SLP
$(\pmb\omega,\mathbf S_{\alpha,\beta})$ with $\mathbf S_{\alpha,\beta}\in\mathcal B_{S}$.
Let $y\in l[0,N+1]$ be a normalized
eigenfunction for $\lambda_*$, and $\Lambda$ the continuous eigenvalue branch
over $\mathcal B_{S}$ through $\lambda_*$.
Then, its derivatives are given by\begin{equation}\label{48}
\Lambda'_{\alpha}(\alpha, \beta)=-|y_0|^2-|f_0\Delta y_0|^2,\;\;
\Lambda'_{\beta}(\alpha, \beta)=|y_N|^2+|f_N\Delta y_N|^2.
\end{equation}
\end{thm}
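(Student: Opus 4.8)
The plan is to write the characteristic function of $(\pmb\omega,\mathbf S_{\alpha,\beta})$ explicitly in the parameters $\alpha,\beta$ and then differentiate it implicitly along the branch. Since $(\pmb\omega,\mathbf S_{\alpha,\beta})$ is self-adjoint with a separated BC, Corollary \ref{c31} shows $\lambda_*$ is simple, so by Theorem \ref{th42} (restricted along the $C^\infty$ parametrization $(\alpha,\beta)\mapsto\mathbf S_{\alpha,\beta}$ of $\mathcal B_S$) the map $\Lambda(\alpha,\beta):=\Lambda(\mathbf S_{\alpha,\beta})$ is $C^\infty$ near the given point. Taking the representative $(A,B)$ of $\mathbf S_{\alpha,\beta}$ from \eqref{25} and feeding it into Lemma \ref{l33} gives, after a short computation, $\det A=\det B=0$ and
\[
\Gamma(\lambda,\alpha,\beta)=\cos\beta\,\chi_N(\lambda)-\sin\beta\,f_N\Delta\chi_N(\lambda),\qquad
\chi_n(\lambda):=\sin\alpha\,\phi_n(\lambda)+\cos\alpha\,\psi_n(\lambda),
\]
where $\chi(\lambda)$ is the solution of \eqref{11} with $\chi_0=\sin\alpha$, $f_0\Delta\chi_0=\cos\alpha$ by \eqref{31} (so it satisfies the BC at $n=0$), and $\chi(\lambda_*)$ is real since $\pmb\omega$ and $\lambda_*$ are real. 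Hence $\chi(\lambda_*)$ is a scalar multiple of the eigenfunction, $y=c\,\chi(\lambda_*)$, and the normalization in Definition \ref{de31} gives $|c|^2\sum_{n=1}^N w_n\chi_n(\lambda_*)^2=1$. Since $\chi(\lambda_*)$ also satisfies the BC at $n=N$, one has $\bigl(\chi_N(\lambda_*),f_N\Delta\chi_N(\lambda_*)\bigr)=\rho\,(\sin\beta,\cos\beta)$ for some $\rho\neq0$ (nonzero by the backward uniqueness in Lemma \ref{l31}), with $\rho^2=|\chi_N(\lambda_*)|^2+|f_N\Delta\chi_N(\lambda_*)|^2$.

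Differentiating $\Gamma(\Lambda(\alpha,\beta),\alpha,\beta)\equiv0$ yields $\Lambda'_\alpha=-\Gamma_\alpha/\Gamma_\lambda$ and $\Lambda'_\beta=-\Gamma_\beta/\Gamma_\lambda$, all partials evaluated at $(\lambda_*,\alpha,\beta)$; these ratios do not depend on the chosen representative of $\mathbf S_{\alpha,\beta}$, since any smooth rescaling of $\Gamma$ cancels at the simple zero $\lambda_*$. Put $\xi_n:=\cos\alpha\,\phi_n(\lambda_*)-\sin\alpha\,\psi_n(\lambda_*)=\partial_\alpha\chi_n(\lambda_*)$ and $\dot\chi_n:=\partial_\lambda\chi_n(\lambda_*)$, and write $W(u,v)_n:=u_n f_n\Delta v_n-v_n f_n\Delta u_n$. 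Using $\bigl(\chi_N(\lambda_*),f_N\Delta\chi_N(\lambda_*)\bigr)=\rho(\sin\beta,\cos\beta)$ one rewrites $\Gamma_\alpha|_{\lambda_*}=-\rho^{-1}W(\chi(\lambda_*),\xi)_N$, $\Gamma_\lambda|_{\lambda_*}=-\rho^{-1}W(\chi(\lambda_*),\dot\chi)_N$, and $\Gamma_\beta|_{\lambda_*}=-\sin\beta\,\chi_N(\lambda_*)-\cos\beta\,f_N\Delta\chi_N(\lambda_*)=-\rho$. Since $\chi(\lambda_*)$ and $\xi$ are both solutions of \eqref{11} at $\lambda_*$, $W(\chi(\lambda_*),\xi)_n$ is $n$-independent by \eqref{35}, so it equals its value at $n=0$, namely $\chi_0 f_0\Delta\xi_0-\xi_0 f_0\Delta\chi_0=-\sin^2\alpha-\cos^2\alpha=-1$. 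For $\dot\chi$, which solves $-\nabla(f_n\Delta\dot\chi_n)+q_n\dot\chi_n-\lambda_* w_n\dot\chi_n=w_n\chi_n(\lambda_*)$, the discrete Lagrange identity (summation by parts) on $[1,N]$ together with $\dot\chi_0=f_0\Delta\dot\chi_0=0$ gives $W(\chi(\lambda_*),\dot\chi)_N=-\sum_{n=1}^N w_n\chi_n(\lambda_*)^2$.

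Assembling, $\Gamma_\alpha|_{\lambda_*}=\rho^{-1}$ and $\Gamma_\lambda|_{\lambda_*}=\rho^{-1}\sum_{n=1}^N w_n\chi_n(\lambda_*)^2$, whence $\Lambda'_\alpha=-\bigl(\sum_{n=1}^N w_n\chi_n(\lambda_*)^2\bigr)^{-1}$ and $\Lambda'_\beta=\rho^2\bigl(\sum_{n=1}^N w_n\chi_n(\lambda_*)^2\bigr)^{-1}$. Substituting $\sum_{n=1}^N w_n\chi_n(\lambda_*)^2=|c|^{-2}$, $y=c\,\chi(\lambda_*)$, $|\chi_0|^2+|f_0\Delta\chi_0|^2=1$, and $|\chi_N(\lambda_*)|^2+|f_N\Delta\chi_N(\lambda_*)|^2=\rho^2$ yields $\Lambda'_\alpha=-|c|^2=-(|y_0|^2+|f_0\Delta y_0|^2)$ and $\Lambda'_\beta=|c|^2\rho^2=|y_N|^2+|f_N\Delta y_N|^2$, which is \eqref{48}. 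The one delicate point, and the one I expect to be the main obstacle, is the sign bookkeeping in the discrete Lagrange identity: one must verify that in $\sum_{n=1}^N[v_n\nabla(f_n\Delta u_n)-u_n\nabla(f_n\Delta v_n)]$ the interior terms telescope (the cross terms $(\Delta u_n)f_n(\Delta v_n)$ cancel because $f_n$ is scalar), leaving exactly $\bigl(v_N f_N\Delta u_N-u_N f_N\Delta v_N\bigr)-\bigl(v_0 f_0\Delta u_0-u_0 f_0\Delta v_0\bigr)$, so that the identification of $\Gamma_\lambda(\lambda_*)$ with $\sum_{n=1}^N w_n\chi_n(\lambda_*)^2$, and hence the signs in \eqref{48}, come out as asserted; everything else is a routine computation.
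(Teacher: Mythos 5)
Your proof is correct, but it follows a genuinely different route from the paper's. The paper perturbs the boundary condition directly: it takes eigenfunctions $y$ and $z$ for the parameters $\alpha$ and $\alpha+h$ (using the continuous choice of eigenfunction from Lemma \ref{l43} and Remark \ref{r42}), derives the Green-type identity $\left(\Lambda(\alpha+h,\beta)-\Lambda(\alpha,\beta)\right)\sum_{n=1}^N w_n y_n\bar z_n=[y_0,z_0]-[y_N,z_N]$, expresses the surviving boundary term through $\tan(\alpha+h)-\tan\alpha$ (treating $\alpha=\pi/2$ separately via $\cot$), and passes to the limit $h\to 0$. You instead write the characteristic function explicitly as $\Gamma(\lambda,\alpha,\beta)=\cos\beta\,\chi_N(\lambda)-\sin\beta\,f_N\Delta\chi_N(\lambda)$ and differentiate implicitly, evaluating $\Gamma_\alpha$, $\Gamma_\beta$, $\Gamma_\lambda$ via the constancy of the Wronskian (a consequence of \eqref{35}) and the discrete Lagrange identity applied to $\chi$ and $\partial_\lambda\chi$; I checked the sign bookkeeping you flagged and it comes out as you assert, including $W(\chi,\dot\chi)_N=-\sum_{n=1}^N w_n\chi_n(\lambda_*)^2$. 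Your route buys several things: it needs no continuous family of eigenfunctions, no case split at $\alpha=\pi/2$, and it yields $\Gamma_\lambda(\lambda_*)=\rho^{-1}\sum_{n=1}^N w_n\chi_n(\lambda_*)^2\neq 0$ as a byproduct, which independently reproves the simplicity statement of Corollary \ref{c31} rather than merely citing it. The paper's route is more self-contained in that differentiability of $\Lambda$ in $\alpha$ and $\beta$ is established directly from the difference quotient, whereas you obtain it from the implicit function theorem (your appeal to Theorem \ref{th42} through the parametrization $(\alpha,\beta)\mapsto\mathbf S_{\alpha,\beta}$ is slightly informal, but your own computation that $\Gamma_\lambda(\lambda_*)\neq 0$ already supplies the hypothesis needed to apply Lemma \ref{l42} directly to $\Gamma(\lambda,\alpha,\beta)=0$, so nothing is missing).
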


\begin{proof} We first show that the first relation in \eqref{48} holds.
Fix all the components of $(\pmb\omega,\mathbf S_{\alpha,\beta})$ except $\alpha$.
Let $y=y(\cdot,\alpha)$. By Corollary \ref{c31}, $\lambda_*$ is a simple
eigenvalue of $(\pmb\omega,\mathbf S_{\alpha,\beta})$. By Remark \ref{r42}, we can choose
 an eigenfunction $z=y(\cdot,\alpha+h)$ with respect to $\Lambda(\alpha+h,\beta)$
 for $h\in \mathbb R$
sufficiently small such that $z\to y$ as $h\to 0$. From \eqref{11} we get that
$$\left(\Lambda(\alpha+h,\beta)-\Lambda(\alpha,\beta)\right)
w_ny_n\bar{z}_n=-\Delta [y_{n-1},z_{n-1}],$$
where $[y_{n},z_{n}]:=y_n(f_n\Delta\bar{z}_n)-(f_n\Delta y_n)\bar{z}_n$.
Hence,\begin{equation}\label{49}
\left(\Lambda(\alpha+h,\beta)-\Lambda(\alpha,\beta)\right)\sum\limits_{n=1}^Nw_ny_n\bar{z}_n
 =[y_0,z_0]-[y_N,z_N].
\end{equation}
The BC $\mathbf S_{\alpha,\beta}$ with respect to $\beta$ implies that
\begin{equation}\label{410}
[y_N,z_N]=0.
\end{equation}
In the case that $\alpha\neq \pi/2$, by the BC $\mathbf S_{\alpha,\beta}$
with respect to $\alpha$,
together with \eqref{49} and \eqref{410}, we get that
\begin{equation}\label{411}
 \left(\Lambda(\alpha+h,\beta)-\Lambda(\alpha,\beta)\right)\sum\limits_{n=1}^N
 w_ny_n\bar{z}_n
 =-(\tan (\alpha+h)-\tan\alpha)(f_0\Delta y_0)(f_0\Delta \bar{z}_0).
\end{equation}
 Dividing both sides of \eqref{411} by $h$ and taking the limit as $h\to 0$, we obtain that
$$
\Lambda'_{\alpha}(\alpha,\beta)=-|f_0\Delta y_0|^2\sec^2\alpha =-|y_0|^2-|f_0\Delta y_0|^2.
$$
In the other case that $\alpha= \pi/2$,
by the BC $\mathbf S_{\pi/2,\beta}$ with respect to $\alpha$, together with \eqref{49} and \eqref{410},
we get that $f_0\Delta y_0=0$ and
\begin{equation}\label{412}
\left(\Lambda(\pi/2+h,\beta)-\Lambda(\pi/2,\beta)\right)\sum\limits_{n=1}^Nw_ny_n\bar{z}_n
 =\cot(\pi/2+h)y_0\bar{z}_0.
\end{equation}
Dividing both sides of \eqref{412} by $h$ and taking the limit as $h\to 0$, we obtain that
$$
\Lambda'_{\alpha}(\pi/2,\beta)=-|y_0|^2.
$$
Hence, the first relation in \eqref{48} follows.

With a similar argument to that used in the above discussion, one can show that the second relation in \eqref{48} holds. This completes the proof.\end{proof}

The following result is directly derived from Corollary \ref{c31} and Theorem \ref{th43}.

\begin{thm}\label{th44} Assume that \eqref{11} is in
$\Omega_N^{\mathbb R,+}$. Then each continuous eigenvalue branch over
$\mathcal B_{S}$ is always
strictly decreasing in the $\alpha$-direction and always
strictly increasing in the $\beta$-direction.\end{thm}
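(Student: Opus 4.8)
The plan is to obtain Theorem \ref{th44} as an immediate consequence of Corollary \ref{c31} and Theorem \ref{th43}, the only extra ingredient being a short strict–positivity argument resting on the uniqueness of solutions of initial value problems, i.e.\ Lemma \ref{l31}.

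First I would fix a continuous eigenvalue branch $\Lambda$ over $\mathcal B_S$, defined on a connected neighborhood of some $\mathbf S_{\alpha_0,\beta_0}$, and recall that since \eqref{11} lies in $\Omega_N^{\mathbb R,+}$, Corollary \ref{c31} tells us that every eigenvalue attached to a separated self-adjoint BC is simple. Hence $\Lambda$ is in fact a continuous \emph{simple} eigenvalue branch, and at each point $\mathbf S_{\alpha,\beta}$ of its domain the derivative formulas \eqref{48} of Theorem \ref{th43} apply: choosing a normalized eigenfunction $y\in l[0,N+1]$ for $\lambda_*=\Lambda(\alpha,\beta)$, we have $\Lambda'_{\alpha}(\alpha,\beta)=-|y_0|^2-|f_0\Delta y_0|^2$ and $\Lambda'_{\beta}(\alpha,\beta)=|y_N|^2+|f_N\Delta y_N|^2$.

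Next I would verify that these quantities are genuinely nonzero, which is the one place any argument is needed. If $y_0=0$ and $f_0\Delta y_0=0$, then the uniqueness part of Lemma \ref{l31} (with $m=0$) forces $y$ to be the trivial solution of \eqref{11}, contradicting the non-triviality of an eigenfunction (Remark \ref{r31}(i)). Therefore $|y_0|^2+|f_0\Delta y_0|^2>0$, so $\Lambda'_{\alpha}(\alpha,\beta)<0$; the same argument with $m=N$ gives $|y_N|^2+|f_N\Delta y_N|^2>0$, so $\Lambda'_{\beta}(\alpha,\beta)>0$. Since these strict inequalities hold at every point of the domain of $\Lambda$, the branch is strictly decreasing in the $\alpha$-direction and strictly increasing in the $\beta$-direction, which is the assertion.

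There is essentially no genuine obstacle here: the substantive work has already been done in Theorem \ref{th43}, and the only subtlety is ruling out the degenerate case in which an eigenfunction vanishes together with its quasi-derivative at an endpoint — precisely what Lemma \ref{l31} forbids. The remainder is a direct citation of the established derivative formulas together with Corollary \ref{c31} to guarantee simplicity throughout.
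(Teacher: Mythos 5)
Your proposal is correct and follows essentially the same route as the paper, which simply states that Theorem \ref{th44} is ``directly derived from Corollary \ref{c31} and Theorem \ref{th43}.'' The only difference is that you make explicit the strict-positivity step (an eigenfunction cannot satisfy $y_0=f_0\Delta y_0=0$, nor $y_N=f_N\Delta y_N=0$, by the uniqueness in Lemma \ref{l31}), a detail the paper leaves implicit but which is exactly the right justification.
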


The following results give the derivative formulas of continuous simple eigenvalue
branch $\Lambda$ over $\Omega_N^{\mathbb R,+} \times\mathcal B^{\mathbb C}$ with respect to $\mathcal B^{\mathbb C}$.

\begin{thm}\label{th45} Fix
$\pmb\omega\in\Omega_N^{\mathbb R,+}$. Let $\lambda_*$ be a simple eigenvalue of
$(\pmb\omega,\mathbf{A})$ for a
self-adjoint boundary condition $\mathbf A$, $y \in l[0,N+1]$ be a normalized
eigenfunction for $\lambda_*$, and $\Lambda$ be the continuous simple
eigenvalue branch of $(\pmb \omega,\mathbf{B})$ for $\mathbf{B} \in\mathcal B^{\mathbb C}$
through $\lambda_*$. Then,
we have the following derivative formulas:
\begin{itemize}
 \item[{\rm (1)}]  when $\mathbf A\in\mathcal O_{1,3}^{\mathbb C}$,
 \begin{equation}\label{413}
d\Lambda\big|_{\mathbf A}(H\,|\,L) =\left(f_0 \Delta\bar{y}_0, f_N \Delta\bar{y}_N\right)
\begin{pmatrix} h_{12}&\bar{h}_{22} \\ h_{22}&\l_{22} \end{pmatrix}
\begin{pmatrix}f_0 \Delta y_0 \\ f_N \Delta y_N \end{pmatrix}
\end{equation}
for all $(H\,|\,L)$ in$$
\mathrm {T}_{\mathbf A}\mathcal B^{\mathbb C}=\mathrm {T}_{\mathbf A}\mathcal O_{1,3}^{\mathbb C}
=\left\{ \begin{pmatrix} 0&h_{12}&0&\bar{h}_{22} \\
0&h_{22}&0&l_{22}
\end{pmatrix}:
        h_{12},l_{22} \in \mathbb R, \ h_{22} \in \mathbb C  \right\};
$$

 \item[{\rm (2)}]  when $\mathbf A\in\mathcal O_{1,4}^{\mathbb C}$,
 \begin{equation}\label{414}
d\Lambda\big|_{\mathbf A}(H\,|\,L) =\left(f_0 \Delta\bar{y}_0,\bar{y}_N \right)
\begin{pmatrix} h_{12}&\bar{h}_{22} \\ h_{22}&l_{21}
\end{pmatrix}
\begin{pmatrix} f_0 \Delta y_0 \\ y_N \end{pmatrix}
\end{equation}
 for all $(H\,|\,L)$ in
$$
\mathrm {T}_{\mathbf A}\mathcal B^{\mathbb C}=\mathrm {T}_{\mathbf A}\mathcal O_{1,4}^{\mathbb C}
=\left\{ \begin{pmatrix} 0&h_{12}&\bar{h}_{22}&0 \\
0&h_{22}&l_{21}&0
\end{pmatrix}:
         h_{12},l_{21} \in \mathbb R, \ h_{22} \in \mathbb C \right\};
$$

 \item[{\rm (3)}]  when $\mathbf A\in\mathcal O_{2,3}^{\mathbb C}$,\begin{equation}\label{415}
d\Lambda\big|_{\mathbf A}(H\,|\,L) =\left(\bar{y}_0, f_N \Delta\bar{y}_N \right)
\begin{pmatrix} h_{11}&\bar{h}_{21} \\ h_{21}&l_{22}
\end{pmatrix}
\begin{pmatrix} y_0 \\ f_N \Delta y_N  \end{pmatrix}
\end{equation}
for all $(H\,|\,L)$ in
$$
\mathrm {T}_{\mathbf A}\mathcal B^{\mathbb C}=\mathrm {T}_{\mathbf A}\mathcal O_{2,3}^{\mathbb C}
=\left\{ \begin{pmatrix} h_{11}&0&0&\bar{h}_{21} \\
h_{21}&0&0&l_{22}
\end{pmatrix}:
          h_{11},l_{22} \in \mathbb R, \ h_{21} \in \mathbb C \right\};
$$

 \item[{\rm (4)}] when $\mathbf A\in\mathcal O_{2,4}^{\mathbb C}$,\begin{equation}\label{416}
 d\Lambda\big|_{\mathbf A}(H\,|\,L) =\left(\bar{y}_0, \bar{y}_N\right)
\begin{pmatrix} h_{11}&\bar{h}_{21} \\ h_{21}&l_{21} \end{pmatrix}
\begin{pmatrix} y_0 \\ y_N \end{pmatrix}
\end{equation}
for all $(H\,|\,L)$ in
$$
\mathrm {T}_{\mathbf A}\mathcal B^{\mathbb C} =\mathrm {T}_{\mathbf A}\mathcal O_{2,4}^{\mathbb C}
=\left\{ \begin{pmatrix} h_{11}&0&\bar{h}_{21}&0 \\
h_{21}&0&l_{21}&0
\end{pmatrix}:
         h_{11},l_{21} \in \mathbb R, \ h_{21} \in \mathbb C \right\}.
$$
\end{itemize}
\end{thm}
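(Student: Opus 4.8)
The strategy is to follow the proof of Theorem~\ref{th43}, replacing the one-parameter family $\mathbf S_{\alpha,\beta}$ by curves through $\mathbf A$ lying in the chart $\mathcal O_{i,j}^{\mathbb C}$ that contains it. Since $\lambda_*$ is simple for the self-adjoint problem $(\pmb\omega,\mathbf A)$, Theorem~\ref{th42} already tells us that the continuous simple eigenvalue branch $\Lambda$ is $C^\infty$ near $(\pmb\omega,\mathbf A)$, so $d\Lambda\big|_{\mathbf A}$ exists as a (real-)linear functional on $\mathrm T_{\mathbf A}\mathcal B^{\mathbb C}$, and it suffices to compute $\frac{d}{dt}\Lambda(\pmb\omega,\mathbf B(t))\big|_{t=0}$ for curves $\mathbf B(t)$ that are affine in the coordinates of $\mathcal O_{i,j}^{\mathbb C}$. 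By Lemma~\ref{l43} together with Remark~\ref{r42}, for every $\mathbf B$ close to $\mathbf A$ we may pick an eigenfunction $u=u(\pmb\omega,\mathbf B)\in l[0,N+1]$ of $(\pmb\omega,\mathbf B)$ for $\Lambda(\pmb\omega,\mathbf B)$ that depends continuously on $\mathbf B$ and satisfies $u\to y$ as $\mathbf B\to\mathbf A$, where $y$ is the prescribed normalized eigenfunction for $\lambda_*$.

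The computational backbone is the discrete Lagrange identity used in the proof of Theorem~\ref{th43}. Because $(\pmb\omega,\mathbf B)$ is self-adjoint and $\pmb\omega\in\Omega_N^{\mathbb R,+}$, the eigenvalue $\Lambda(\pmb\omega,\mathbf B)$ is real by Lemma~\ref{l34}, hence $\bar u$ solves \eqref{11} with spectral parameter $\Lambda(\pmb\omega,\mathbf B)$; subtracting the equation satisfied by $y$, multiplying, and summing over $n\in[1,N]$ exactly as in \eqref{49} gives
$$\big(\Lambda(\pmb\omega,\mathbf B)-\lambda_*\big)\sum_{n=1}^N w_n y_n\bar u_n=[y_0,u_0]-[y_N,u_N],$$
where $[y_n,u_n]:=y_n(f_n\Delta\bar u_n)-(f_n\Delta y_n)\bar u_n$. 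As $\mathbf B\to\mathbf A$ the left-hand sum tends to $\sum_{n=1}^N w_n|y_n|^2=1$ by Definition~\ref{de31}(iii), so the whole problem reduces to linearizing the boundary form $[y_0,u_0]-[y_N,u_N]$ in the increment $\mathbf B-\mathbf A$.

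To carry this out in, say, the chart $\mathcal O_{1,3}^{\mathbb C}$, write the normalized coefficient matrices of $\mathbf A$ and of a nearby $\mathbf B$ in the form of \eqref{26} with parameters $(a_{12},\zeta,b_{22})$ and $(a_{12}+h_{12},\zeta+h_{22},b_{22}+l_{22})$, where the increment $(h_{12},h_{22},l_{22})$, with $h_{12},l_{22}\in\mathbb R$ and $h_{22}\in\mathbb C$, encodes the tangent vector $(H\,|\,L)\in\mathrm T_{\mathbf A}\mathcal O_{1,3}^{\mathbb C}$. The two rows of the boundary condition express $y_0$ and $y_N$ in terms of $f_0\Delta y_0$ and $f_N\Delta y_N$, and likewise $u_0,u_N$ in terms of $f_0\Delta u_0,f_N\Delta u_N$; substituting these into $[y_0,u_0]-[y_N,u_N]$ and using that in a self-adjoint chart the two off-diagonal coefficients are $\zeta$ and $\bar\zeta$, all terms free of $(h_{12},h_{22},l_{22})$ cancel and one is left with
$$[y_0,u_0]-[y_N,u_N]=h_{12}(f_0\Delta y_0)\overline{f_0\Delta u_0}+\bar h_{22}(f_N\Delta y_N)\overline{f_0\Delta u_0}+h_{22}(f_0\Delta y_0)\overline{f_N\Delta u_N}+l_{22}(f_N\Delta y_N)\overline{f_N\Delta u_N}.$$
Dividing by the real parameter along a coordinate curve and letting $\mathbf B\to\mathbf A$, so that $f_0\Delta u_0\to f_0\Delta y_0$ and $f_N\Delta u_N\to f_N\Delta y_N$, produces precisely \eqref{413}; the coefficient matrix there is Hermitian (since $h_{12},l_{22}\in\mathbb R$), which is consistent with $\Lambda$ being real-valued. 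The remaining charts $\mathcal O_{1,4}^{\mathbb C}$, $\mathcal O_{2,3}^{\mathbb C}$, $\mathcal O_{2,4}^{\mathbb C}$ are treated identically; the only change is which pair among $y_0,f_0\Delta y_0,y_N,f_N\Delta y_N$ is solved for from the two rows of the boundary condition (for $\mathcal O_{2,4}^{\mathbb C}$ one solves for $f_0\Delta y_0,f_N\Delta y_N$ in terms of $y_0,y_N$, and the mixed cases are analogous), and this yields \eqref{414}, \eqref{415}, \eqref{416} in turn.

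The only genuine difficulty is the algebraic bookkeeping — tracking signs and complex conjugates while eliminating the constrained boundary values in each of the four charts, and checking that the $\mathbf A$-dependent terms cancel (which is exactly where the self-adjointness of $\mathbf A$ enters) — together with the elementary observation that, $\Lambda$ being $C^\infty$ by Theorem~\ref{th42}, evaluating these directional derivatives along affine curves in each $\mathcal O_{i,j}^{\mathbb C}$ suffices to recover the full differential $d\Lambda\big|_{\mathbf A}$.
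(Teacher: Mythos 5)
Your proposal is correct and follows essentially the same route as the paper's own proof: choose a continuously varying eigenfunction via Lemma \ref{l43}/Remark \ref{r42}, apply the discrete Lagrange identity to reduce everything to the boundary form $[y_0,u_0]-[y_N,u_N]$, substitute the perturbed boundary condition to isolate the increment terms, and pass to the limit (the paper handles the complex coordinate $h_{22}$ via the Wirtinger derivatives $\partial/\partial z$, $\partial/\partial\bar z$, which is the same bookkeeping you describe as differentiating along real coordinate curves). No substantive difference.
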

\begin{proof} We first show that \eqref{413} holds.  For
$\mathbf A=[A\,|\,B]\in\mathcal O_{1,3}^{\mathbb C}$ given by \eqref{26},
we define
$$\mathbf{B}:=[ A+H\ |\ B+L ]=\left [\begin{array} {cccc}1&a_{12}+h_{12}&0&\bar{z}+\bar{h}_{22}\\
0&z+h_{22}&-1&b_{22}+l_{22}\end{array}  \right ],$$
where
$h_{12}$, $l_{22}\in \mathbb R$, $h_{22}\in\mathbb C$. Obviously, $\mathbf{B}\rightarrow\mathbf{A}$ as $(h_{12},h_{22},l_{22})\rightarrow0$.
By Remark \ref{r42}, we can choose an eigenfunction $z=z(\cdot,\mathbf B)$ of $\Lambda(\pmb{\omega},\mathbf B)$ such that $z\rightarrow y$ as $\mathbf{B}\rightarrow\mathbf{A}$.
 From \eqref{11}, we get $$
\left(\Lambda(\mathbf{B})-\Lambda(\mathbf{A})\right)
\sum\limits_{n=1}^Nw_ny_n\bar{z}_n=[y_0,z_0]-[y_N,z_N],
$$
which, together with the boundary conditions,
$$
A\begin{pmatrix}y_0 \\ f_0\Delta y_0\end{pmatrix}+B\begin{pmatrix}y_N \\ f_N\Delta y_N\end{pmatrix}=0,
$$
and
\vspace{-0.05cm}$$
(A+H)\begin{pmatrix}z_0 \\ f_0\Delta z_0\end{pmatrix}+(B+L)\begin{pmatrix}z_N \\ f_N\Delta z_N\end{pmatrix}=0,
$$
implies that
$$
\begin{array}{ll}&\left(\Lambda(\mathbf{B})-\Lambda (\mathbf{A})\right)\sum\limits_{n=1}^Nw_ny_n\bar{z}_n\\
=&(f_0\Delta y_0)(f_0\Delta \bar{z}_0)h_{12}
+(f_N\Delta y_N)(f_0\Delta \bar{z}_0)\bar{h}_{22}\vspace{0.2cm}\\
&+(f_0\Delta y_0)(f_N\Delta \bar{z}_N)h_{22}+(f_N\Delta y_N)(f_N\Delta\bar{z}_N)l_{22}.
\end{array}$$
Further, using the following equalities
$$
{\partial}/{\partial z}=({1}/{2})({\partial}/{\partial z_1}-i{\partial}/{\partial z_2}),\;\;\;\;
{\partial}/{\partial \bar z}={1}/{2}({\partial}/{\partial z_1}+i{\partial}/{\partial z_2}),
$$
where $z=z_1+i z_2$ with $z_1,z_2\in \mathbb{R}$,
one can easily conclude that \eqref{413} holds. With similar arguments, one can show that
\eqref{414}, \eqref{415}, and \eqref{416} hold. This proof is complete. \end{proof}

Next, we give an important application of Theorem \ref{th45}.

\begin{thm}\label{th46} Assume that \eqref{11} is in
$\Omega_N^{\mathbb R,+}$. Then, in each of the coordinate systems
$\mathcal O_{1,3}^{\mathbb C}$, $\mathcal O_{1,4}^{\mathbb C}$, $\mathcal
O_{2,3}^{\mathbb C}$, and $\mathcal O_{2,4}^{\mathbb C}$ in $\mathcal B^{\mathbb C}$,
every continuous eigenvalue branch is always increasing in the two
real axis directions.\end{thm}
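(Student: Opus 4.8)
The plan is to fix the discrete SLE $(1/f,q,w)\in\Omega_N^{\mathbb R,+}$ and to argue one real coordinate axis at a time: inside a patch $\mathcal O_{i,j}^{\mathbb C}$, moving along one of its two real coordinate directions (so that the complex coordinate $z$ of the patch stays fixed) produces a continuous eigenvalue branch whose restriction $g$ to that axis is a continuous real-valued function of the single varying coordinate $t\in I$, and I want $g$ to be non-decreasing on $I$. Since a continuous function on an interval that is non-decreasing on the complement of a discrete set is non-decreasing everywhere, it suffices to (a) control $g$ at every $t$ where $g(t)$ is an \emph{simple} eigenvalue and (b) show that the set of $t\in I$ at which $g(t)$ has multiplicity $2$ is discrete.

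For (a): if $g(t_0)$ is simple, then by Theorem~\ref{th34} it remains a simple eigenvalue on a neighbourhood, so $g$ is $C^\infty$ there by Theorem~\ref{th42}; and by the relevant one of the four derivative formulas of Theorem~\ref{th45}, evaluated on the tangent vector of the coordinate axis, $g'(t_0)$ equals, for a normalized eigenfunction $y$ of the problem at $t_0$, one of $|f_0\Delta y_0|^2$, $|f_N\Delta y_N|^2$, $|y_0|^2$, $|y_N|^2$ --- for $\mathcal O_{1,3}^{\mathbb C}$ the $a_{12}$- and $b_{22}$-directions give the first two, for $\mathcal O_{1,4}^{\mathbb C}$ the $a_{12}$- and $b_{21}$-directions give $|f_0\Delta y_0|^2$ and $|y_N|^2$, for $\mathcal O_{2,3}^{\mathbb C}$ the $a_{11}$- and $b_{22}$-directions give $|y_0|^2$ and $|f_N\Delta y_N|^2$, and for $\mathcal O_{2,4}^{\mathbb C}$ the $a_{11}$- and $b_{21}$-directions give $|y_0|^2$ and $|y_N|^2$. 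In all cases $g'(t_0)\ge0$, so $g$ is non-decreasing on every subinterval of $I$ on which the eigenvalue stays simple.

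For (b): suppose $g(t_0)$ has multiplicity $2$. Then its geometric multiplicity is $2$ by Theorem~\ref{th33}, so the boundary condition at $t_0$ is forced to be $[\Phi_N(g(t_0))\,|\,-I]$ by Theorem~\ref{th31}; in particular it is coupled, since a separated self-adjoint BC carries only simple eigenvalues (Corollary~\ref{c31}). Near $t_0$, Theorem~\ref{th35}(2) yields two continuous branches $\mu_1\le\mu_2$ with $\mu_i(t_0)=g(t_0)$ that exhaust the eigenvalues in a small disc $C$ around $g(t_0)$, with $g(t)\in\{\mu_1(t),\mu_2(t)\}$ nearby, so $g(t)$ has multiplicity $2$ exactly when $\mu_1(t)=\mu_2(t)$. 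The power sums $p_k(t)=\mu_1(t)^k+\mu_2(t)^k=\frac{1}{2\pi i}\oint_{\partial C}\lambda^k\,\partial_\lambda\Gamma_{(\pmb\omega,t)}(\lambda)/\Gamma_{(\pmb\omega,t)}(\lambda)\,d\lambda$ depend real-analytically on $t$, because $\Gamma_{(\pmb\omega,t)}(\lambda)$ is a polynomial in $\lambda$ whose coefficients are polynomials in $t$ (by Lemma~\ref{l33} and \eqref{34}, the boundary entries being affine in $t$) and is nonzero on $\partial C$ for $t$ near $t_0$; hence $(\mu_1-\mu_2)^2=2p_2-p_1^2$ is real-analytic in $t$. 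It cannot vanish on a subinterval: otherwise all nearby problems would have a double eigenvalue $\mu(t)$ and hence $\mathbf A_t=[\Phi_N(\mu(t))\,|\,-I]$ on a subinterval, whereas a short computation of the $\mathcal O_{i,j}^{\mathbb C}$-normalized form of $[\Phi_N(\lambda)\,|\,-I]$ shows its complex coordinate equals $\pm1/P(\lambda)$ with $P$ one of the non-constant polynomials $\phi_N$, $\psi_N$, $f_N\Delta\phi_N$, $f_N\Delta\psi_N$ of \eqref{34}; as the axis motion keeps that coordinate fixed, $P(\mu(t))$ would be constant, forcing the continuous $\mu(t)$ to be constant, so $\mathbf A_t$ would not vary along the axis --- a contradiction. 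Thus the multiplicity-$2$ parameters form a discrete set, and together with (a) this gives that $g$ is non-decreasing on $I$. The main obstacle is exactly step (b): at a multiplicity-$2$ parameter $g$ need not be differentiable (Remark~\ref{r41}), so the derivative formula of Theorem~\ref{th45} is unavailable there, and one must instead localize the exceptional set through the analyticity of the symmetric functions of the two neighbouring eigenvalues together with the rigidity of Theorems~\ref{th31} and \ref{th33}.
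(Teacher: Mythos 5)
Your proof is correct, and its skeleton is the same as the paper's: a non-negative directional derivative from Theorem \ref{th45} wherever the eigenvalue is simple, discreteness of the multiplicity-$2$ parameters along the coordinate axis, and continuity to bridge the exceptional points. Where you genuinely differ is in how the discreteness is obtained. The paper notes (via Theorems \ref{th31} and \ref{th33}) that a double eigenvalue forces the BC onto the real-analytic curve $\lambda\mapsto[\Phi_N(\lambda)\,|\,-I]$, and then invokes the dichotomy that this curve and the real-analytic coordinate line $C_{z,b_{22}}$ either intersect in a discrete set or coincide entirely; the coincidence case is left as a logical possibility (occurring for at most one pair $(z,b_{22})$) and is disposed of by perturbing $b_{22}$ and passing to the limit using the monotonicity already established on nearby lines. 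You instead make the discriminant $(\mu_1-\mu_2)^2$ of the two local branches real-analytic in the axis parameter via contour integrals of $\lambda^k\,\Gamma'/\Gamma$, and you rule out the coincidence case outright by computing that the complex coordinate of $[\Phi_N(\lambda)\,|\,-I]$ in each chart equals $\pm1/P(\lambda)$ for one of the non-constant polynomials in \eqref{34}, so that curve cannot sit inside a single coordinate line. Your version is more self-contained --- the paper's real-analytic intersection dichotomy is asserted with an external reference, and its ``latter case'' is shown by you to be vacuous --- at the cost of the extra contour-integral machinery; the paper's version is shorter but leans on the closing perturbation argument. Both correctly reduce the remaining work to the continuity of the branch across the (locally finite) exceptional set.
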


For example, in $\mathcal O_{1,3}^{\mathbb C}$, every continuous eigenvalue
branch is always increasing in the $a_{12}$-direction and in the
$b_{22}$-direction. Note that the monotonicity in Theorem \ref{th46} is not necessarily strict
(see Example \ref{e54}).

\begin{proof} Let $z\in \mathbb C$ and $b_{22}\in \mathbb R$. By Lemma 3.7 in \cite{Peng} we know
that$$
C_{z,b_{22}}:=\left\{\left[\begin{array}{cccc} 1&s&0&\bar {z}\\
0&z&-1&b_{22}\end{array}\right]:\;s\in\mathbb R\right\}\bigcup\left\{\left[\begin{array}{cccc} 0&1&0&0\\
0&0&-1&b_{22}\end{array}\right]\right\}
$$
is a real-analytic loop. Let $\Lambda$ be a
continuous eigenvalue branch on a subset of $C_{z,b_{22}}$. Note that both $ C_{z,b_{22}}$ and the curve
 $$ \lambda\mapsto
[\Phi _N(\lambda)\,|\,-I],\;\;\;\;\lambda\in \mathbb R,$$
are real-analytic. So, either their intersection is discrete in $C_{z,b_{22}}$ or they
agree completely.

In the former case, $\Lambda$ is simple on a dense subset of its domain.
Fix an $s_0\in\mathbb R$ and $\delta >0$ such that
$$
\mathbf A(s):=\left[\begin{array}{cccc} 1&s&0&\bar{z}\\
0&z&-1&b_{22}\end{array}\right],\;\;s\in(s_0-\delta,s_0+\delta),
$$
lies in the domain of $\Lambda$ when $\Lambda (\mathbf A(s_0))$
is simple. Assume that $y$ is a normalized
eigenfunction for $\Lambda (\mathbf A(s_0))$. Then, by \eqref{413} we get that
$$
d\Lambda\big|_{\mathbf A}((H\,|\,L)) =|f_0 \Delta y_0|^2\geq 0.
$$
Since $\Lambda$ is simple in a dense open subset of $(s_0-\delta,s_0+\delta)$,
a similar argument implies that $\Lambda$ has a non-negative derivative
at each point in the dense open subset. Thus, $\Lambda$ is increasing
in the dense open subset. Assume that $\Lambda (\mathbf A(s_1))$ is an eigenvalue of
multiplicity 2 with $s_1\in (s_0-\delta,s_0+\delta)$. By the continuity of $\Lambda$
one has that
$$\lim_{s\rightarrow {s_1}^-}\Lambda (\mathbf A(s))
=\Lambda (\mathbf A(s_1))=\lim_{s\rightarrow {s_1}^+}\Lambda (\mathbf A(s)),
$$
which, together with the monotonicity of $\Lambda$
in the dense open subset, implies that $\Lambda$ is increasing in a neighborhood of $s_1$.
Thus, $\Lambda$ is increasing in
$(s_0-\delta,s_0+\delta)$.

The latter case may happen for at most one pair $z\in\mathbb R$
and $b_{22}\in\mathbb R$. The monotonicity of $\Lambda$ can be
deduced from the former case by perturbing $z$ or $b_{22}$.
For example, by perturbing $b_{22}$, namely by $b'_{22}$,
$\Lambda$ is increasing on the domain according to discussion in the former case. So, setting
that $s_1<s_2$, we have that
$$\Lambda (\mathbf A(s_1,b_{22}))=\lim_{b'_{22}\rightarrow b_{22}}\Lambda (\mathbf A(s_1,b'_{22}))\leq\lim_{b'_{22}\rightarrow b_{22}}
\Lambda (\mathbf A(s_2,b'_{22}))=\Lambda (\mathbf A(s_2,b_{22})).$$

 One can show the rest of the claims similarly.
The proof is complete.\end{proof}

\subsection{ Monotonicity on Sturm-Liouville equations of continuous eigenvalue branches of self-adjoint discrete SLPs}

\begin{lem}\label{l44} If $u$
and $v$ are eigenfunctions for eigenvalues of two self-adjoint
discrete SLPs $((1/f,q,w),\mathbf A)$ and
$((1/g,r,s),\mathbf A)$, respectively, with the same
BC $\mathbf A$, then\begin{equation}\label{417}
u_0 ({g_0\Delta\bar{v}_{0}}) -(f_0\Delta u_0)
\bar{v}_{0} =u_N  ({g_N\Delta\bar{v}_{N}}) -(f_N\Delta
u_N)  \bar{v}_{N}.
\end{equation}
\end{lem}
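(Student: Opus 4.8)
The plan is to read both sides of \eqref{417} as the value, at the two endpoints $n=0$ and $n=N$, of one and the same bilinear expression in the boundary data, and then to annihilate it using the canonical forms of self-adjoint boundary conditions from Lemma \ref{l21}. Concretely, with $E=\begin{pmatrix}0&1\\-1&0\end{pmatrix}$ as in Definition \ref{de21}, I would first observe by a one-line computation that
$$u_n(g_n\Delta\bar v_n)-(f_n\Delta u_n)\bar v_n=\begin{pmatrix}u_n\\ f_n\Delta u_n\end{pmatrix}^{\!T}E\begin{pmatrix}\bar v_n\\ g_n\Delta\bar v_n\end{pmatrix},\qquad n=0,N,$$
so that \eqref{417} is precisely the assertion that these two scalars agree. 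Since $u$ and $v$ are eigenfunctions of their respective problems, and using for $\mathbf A$ the canonical representative matrices $A,B$ from Lemma \ref{l21}, the boundary data vectors $\binom{u_0}{f_0\Delta u_0},\binom{u_N}{f_N\Delta u_N}$ and $\binom{v_0}{g_0\Delta v_0},\binom{v_N}{g_N\Delta v_N}$ each satisfy \eqref{12}; the interior difference equations play no further role. As a degenerated BC is never self-adjoint, Lemma \ref{l21} leaves exactly two cases.

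For the separated case $\mathbf A=\mathbf{S}_{\alpha,\beta}$, $\alpha\in[0,\pi)$, $\beta\in(0,\pi]$, I would note that the first row of \eqref{12} forces both $\binom{u_0}{f_0\Delta u_0}$ and $\binom{v_0}{g_0\Delta v_0}$, and hence (as $\alpha$ is real) also $\binom{\bar v_0}{g_0\Delta\bar v_0}$, to be scalar multiples of $\binom{\sin\alpha}{\cos\alpha}$. Since $E$ is antisymmetric, $\xi^{T}E\eta=0$ for any parallel pair $\xi,\eta$, so the $n=0$ scalar above vanishes; the second row of \eqref{12} together with the reality of $\beta$ kills the $n=N$ scalar the same way, and \eqref{417} holds with both sides $0$.

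For the coupled case $\mathbf A=[e^{i\gamma}K\,|\,-I]$ with $\gamma\in[0,\pi)$ and $K\in SL(2,\mathbb R)$, \eqref{12} reads $\binom{u_N}{f_N\Delta u_N}=e^{i\gamma}K\binom{u_0}{f_0\Delta u_0}$ and $\binom{v_N}{g_N\Delta v_N}=e^{i\gamma}K\binom{v_0}{g_0\Delta v_0}$; conjugating the second relation and using that $K$ is real gives $\binom{\bar v_N}{g_N\Delta\bar v_N}=e^{-i\gamma}K\binom{\bar v_0}{g_0\Delta\bar v_0}$. Substituting into the $n=N$ scalar and invoking the elementary identity $M^{T}EM=(\det M)E$, valid for every $2\times2$ matrix $M$, together with $\det K=1$, the factors $e^{i\gamma}$ and $e^{-i\gamma}$ cancel, $K^{T}EK=E$, and the $n=N$ scalar collapses to the $n=0$ scalar, which is exactly \eqref{417}.

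Everything here is routine once the bilinear-form viewpoint is set up; the one step that needs genuine care is the conjugation in the coupled case, where it is essential that Lemma \ref{l21} permits $K$ (and $\gamma$) to be chosen real, so that the conjugated boundary data of $v$ still obeys a relation of the same shape $e^{-i\gamma}K(\cdot)$. I would keep the separated and coupled cases separate, as above, rather than hunting for a uniform argument directly from $AEA^{*}=BEB^{*}$, which would drag in the Lagrangian-subspace characterization of self-adjoint BCs that is unnecessary for this lemma.
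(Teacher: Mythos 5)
Your proof is correct and follows essentially the same route as the paper's: rewrite both sides of \eqref{417} as the $E$-bilinear form evaluated at the endpoint boundary data, then use the canonical forms of Lemma \ref{l21} together with $K^{\mathrm t}EK=E$ in the coupled case. The only difference is that you carry out the separated case explicitly (via parallelism of the boundary vectors and antisymmetry of $E$), where the paper merely remarks that it ``can be treated similarly.''
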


\begin{proof} First, consider the coupled self-adjoint BC $\mathbf{A}$.
It follows from Lemma \ref{l21} that $\mathbf A
=[e^{i\gamma}K\,|\,-I]$ for some $\gamma \in [0,\pi)$ and $K \in \rm
SL(2,\mathbb R)$. So, we have that
$$
K^{\rm t} E K =E,
$$
\vspace{-0.05cm}$$
\begin{pmatrix} u_N \\ f_N\Delta u_N \end{pmatrix} = e^{i\gamma} K
\begin{pmatrix} u_0 \\ f_0\Delta u_0 \end{pmatrix}, \;\;\;\; \begin{pmatrix} v_N \\
g_N\Delta v_N \end{pmatrix} = e^{i\gamma} K \begin{pmatrix} v_0 \\
g_0\Delta v_0 \end{pmatrix}.
$$
Thus,
$$\begin{array}{llll}
 &
  ({g_N\Delta \bar{v}_N}) u_N
  -  \bar{v}_N(f_N\Delta u_N)
=\begin{pmatrix} v_N \\ g_N\Delta v_N \end{pmatrix}^{*}
   E \begin{pmatrix} u_N \\ f_N\Delta u_N \end{pmatrix}\\ \vspace{0.2cm}
  =&\begin{pmatrix} v_0 \\ g_0\Delta v_0 \end{pmatrix}^{*}
   K^{\rm t} E K \begin{pmatrix} u_0 \\ f_0\Delta u_0 \end{pmatrix}
  =\begin{pmatrix} v_0 \\ g_0\Delta v_0 \end{pmatrix}^{*}
   E \begin{pmatrix} u_0 \\ f_0\Delta u_0 \end{pmatrix}\\ \vspace{0.2cm}
      = &({g_0\Delta \bar{v}_0})u_0
   - \bar{v}_0(f_0\Delta u_0) .
\end{array}$$
Hence, \eqref{417} holds in the coupled case.

The separated case can be treated similarly. The proof is complete.\end{proof}

\begin{thm}\label{th47} Fix $\mathbf{A}\in \mathcal{B}^{\mathbb{C}}$. Let $\lambda_*$ be a simple
eigenvalue of a self-adjoint discrete SLP
$(\pmb\omega,\mathbf A)=((1/f,q,w),\mathbf A)$, $y\in l[0,N+1]$ a normalized
eigenfunction for $\lambda_*$, and $\Lambda$ the continuous
simple eigenvalue branch over $\Omega_N^{\mathbb R,+}$ through $\lambda_*$.
Then,
\begin{equation}\label{418}
d\Lambda\big|_{\pmb\omega}(h,k,l) =-\sum_{n=0}^{N-1}
|f_n\Delta y_n|^2 h_n
 +\sum_{n=1}^N |y_n|^2 k_n
 -\lambda_* \sum_{n=1}^N |y_n|^2 l_n
\end{equation}
for all
$(h,k,l) =\big( (h_0,\cdots,h_N), (k_1,\cdots,k_N),
(l_1,\cdots,l_N) \big)\in \rm T_{\pmb\omega}\Omega_N^{\mathbb R,+}=\mathbb R^{N+1}
\times \mathbb R^N \times \mathbb R^N.$\end{thm}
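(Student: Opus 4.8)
The plan is to differentiate $\Lambda$ along a straight line through $\pmb\omega$ in the coordinates $(1/f,q,w)$ and to turn the resulting difference quotient into a boundary-free sum by means of a discrete Green (Lagrange) identity, exactly in the spirit of the proof of Theorem~\ref{th43}. Fix a tangent vector $(h,k,l)\in\mathrm{T}_{\pmb\omega}\Omega_N^{\mathbb R,+}=\mathbb R^{N+1}\times\mathbb R^N\times\mathbb R^N$ and, for real $t$ near $0$, let $\pmb\omega(t)$ be the discrete SLE with coefficients $(\tilde f,\tilde q,\tilde w)$ depending on $t$ through $1/\tilde f_n=1/f_n+th_n$ ($n\in[0,N]$), $\tilde q_n=q_n+tk_n$ and $\tilde w_n=w_n+tl_n$ ($n\in[1,N]$); since $1/f_n\neq0$ and $w_n>0$, one has $\pmb\omega(t)\in\Omega_N^{\mathbb R,+}$ for $|t|$ small, so $(\pmb\omega(t),\mathbf A)$ is self-adjoint. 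By Theorem~\ref{th42} the function $t\mapsto\Lambda(t):=\Lambda(\pmb\omega(t),\mathbf A)$ is $C^\infty$ near $t=0$, hence $d\Lambda\big|_{\pmb\omega}(h,k,l)=\Lambda'(0)$, and by Lemma~\ref{l43} together with Remark~\ref{r42} there is a continuous family $z(t)\in l[0,N+1]$ of eigenfunctions of $(\pmb\omega(t),\mathbf A)$ for $\Lambda(t)$. Since $\lambda_*$ is simple its eigenspace is one dimensional, so after rescaling $z(t)$ by a fixed nonzero constant we may assume $z(0)=y$. It then suffices to evaluate $\lim_{t\to0}(\Lambda(t)-\lambda_*)/t$.

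For the key identity, write the SLE \eqref{11} for $y$ at $\lambda_*$ and --- taking complex conjugates and using that $\tilde f_n,\tilde q_n,\tilde w_n$ and $\Lambda(t)$ are real in the self-adjoint case (Lemma~\ref{l34}) --- the SLE for $z(t)$ at $\Lambda(t)$. Multiply the former by $\bar z(t)_n$ and the latter by $y_n$, subtract, and sum over $n\in[1,N]$. Discrete summation by parts rewrites $\sum_{n=1}^N\{\bar z_n[-\nabla(f_n\Delta y_n)]-y_n[-\nabla(\tilde f_n\Delta\bar z_n)]\}$ as boundary contributions at $n=0$ and $n=N$ plus the interior sum $\sum_{n=1}^{N-1}(f_n-\tilde f_n)(\Delta y_n)(\Delta\bar z_n)$; substituting $y_1=y_0+\Delta y_0$ and $z_1=z_0+\Delta z_0$ in the boundary term at $0$ splits off an extra summand $(f_0-\tilde f_0)(\Delta y_0)(\Delta\bar z_0)$, enlarging the interior sum so that it runs over $n=0,\dots,N-1$, while the leftover boundary terms at $0$ and $N$ are equal by the identity \eqref{417} of Lemma~\ref{l44} (applied with $u=y$, $v=z(t)$, $g=\tilde f$) and hence their difference drops out. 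This yields
$$\sum_{n=0}^{N-1}(f_n-\tilde f_n)(\Delta y_n)(\Delta\bar z_n)+\sum_{n=1}^N(q_n-\tilde q_n)\,y_n\bar z_n=\sum_{n=1}^N(\lambda_* w_n-\Lambda(t)\tilde w_n)\,y_n\bar z_n .$$

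Finally, divide by $t$ and let $t\to0$, using $z(t)_n\to y_n$ and $\Lambda(t)\to\lambda_*$. From $1/\tilde f_n=1/f_n+th_n$ one gets $\tilde f_n=f_n/(1+th_nf_n)$, so $(f_n-\tilde f_n)/t\to h_nf_n^2$; also $(q_n-\tilde q_n)/t=-k_n$ and $(\lambda_* w_n-\Lambda(t)\tilde w_n)/t=-\frac{\Lambda(t)-\lambda_*}{t}\,w_n-\Lambda(t)\,l_n\to-\Lambda'(0)w_n-\lambda_* l_n$. Hence the identity passes to
$$\sum_{n=0}^{N-1}|f_n\Delta y_n|^2\,h_n-\sum_{n=1}^N|y_n|^2\,k_n=-\Lambda'(0)\sum_{n=1}^N w_n|y_n|^2-\lambda_*\sum_{n=1}^N|y_n|^2\,l_n ,$$
and invoking the normalization $\sum_{n=1}^N w_n|y_n|^2=1$ and solving for $\Lambda'(0)=d\Lambda\big|_{\pmb\omega}(h,k,l)$ gives precisely \eqref{418}. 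The one genuinely delicate point is the bookkeeping in the summation by parts: one must see that the boundary term at $n=0$ both supplies the missing $n=0$ summand of the interior $f$-sum and, once that summand is removed, matches (via \eqref{417}) the boundary term at $n=N$ so that the two cancel; everything after that is the routine passage to the limit, legitimate because $\Lambda$ is differentiable (Theorem~\ref{th42}) and $z(t)$ may be chosen continuous (Lemma~\ref{l43}, Remark~\ref{r42}).
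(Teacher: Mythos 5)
Your proof is correct and follows essentially the same route as the paper's: choose a continuous family of eigenfunctions via Lemma \ref{l43}/Remark \ref{r42}, apply the discrete Lagrange identity with the boundary terms cancelled by Lemma \ref{l44}, and pass to the limit, with the only cosmetic difference being that you restrict to a line $t\mapsto\pmb\omega(t)$ and divide by $t$, whereas the paper writes the identity directly in the increments $(h,k,l)$. Your handling of the $1/f$-coordinate (so that $(f_n-\tilde f_n)/t\to h_nf_n^2$) matches the paper's factor $(1/f_n-1/g_n)=-h_n$ multiplied by $f_n g_n\to f_n^2$, and the bookkeeping at $n=0$ is exactly the step the paper performs.
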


\begin{proof} Denote $\Lambda=\Lambda(1/f,q,w)$ and $y=y(\cdot,1/f,q,w)$.
By Remark \ref{r42} we can choose an eigenfunction $z=y(\cdot,1/f+h,q+k,w+l)$
with respect to $\lambda(1/f+h,q+k,w+l)$ for $(h,k,l)\in\mathbb R^{N+1} \times \mathbb R^N \times \mathbb R^N$
sufficiently small such that $z\to y$ as  $(h,k,l)\to0$. For convenience, we set $1/g=1/f +h$ with $g=\{g_{n}\}_{n=0}^{N}$, $\hat q=q+k$, $\hat w=w+l$.
Using \eqref{11} and Lemma
\ref{l44}, we get that\vspace{-0.1cm}
\begin{align*}\vspace{-0.5cm}
&\left(\Lambda(1/g,\hat q,\hat w)-\Lambda(1/f,q,w)\right)\sum\limits_{n=1}^Nw_ny_n\bar{z}_n
\\\vspace{-0.5cm}&=\sum\limits_{n=1}^N \left(\bar{z}_n\nabla (f_n\Delta
y_n)- y_n\nabla (g_n\Delta
\bar{z}_n)\right)-\Lambda(1/g,\hat q,\hat w)\sum\limits_{n=1}^Nl_ny_n\bar{z}_n
+\sum\limits_{n=1}^Nk_ny_n\bar{z}_n\\\vspace{-0.5cm}
&=\sum\limits_{n=1}^N \Delta y_{n-1}(g_{n-1}\Delta
\bar{z}_{n-1})-\sum\limits_{n=1}^N \Delta \bar{z}_{n-1}(f_{n-1}\Delta
y_{n-1})+y_0(g_0\Delta \bar{z}_0)\vspace{-0.5cm}
\end{align*}
\vspace{-0.1cm}$$-y_N(g_N\Delta
\bar{z}_N)
+\bar{z}_N(f_N\Delta y_N)-\bar{z}_0(f_0\Delta y_0)-\Lambda(1/g,\hat q,\hat w)\sum\limits_{n=1}^Nl_ny_n\bar{z}_n+\sum\limits_{n=1}^Nk_ny_n\bar{z}_n$$\vspace{-0.1cm}
\begin{align*}\vspace{-0.5cm}
&=\sum\limits_{n=0}^{N-1} (f_{n}\Delta y_{n})(g_{n}\Delta
\bar{z}_{n})(1/f_{n}-1/g_{n})-\Lambda(1/g,\hat q,\hat w)\sum\limits_{n=1}^Nl_ny_n\bar{z}_n
+\sum\limits_{n=1}^Nk_ny_n\bar{z}_n
\\\vspace{-0.5cm} &=-\sum\limits_{n=0}^{N-1}
(f_{n}\Delta y_{n})(g_{n}\Delta
\bar{z}_{n})h_{n}-\Lambda(1/g,\hat q,\hat w)\sum\limits_{n=1}^{N}l_ny_n\bar{z}_n
+\sum\limits_{n=1}^{N}k_ny_n\bar{z}_n,
\end{align*}
which yields that \eqref{418} holds. This completes the proof.\end{proof}

\begin{thm}\label{th48} Fix a self-adjoint BC. Then,
each continuous eigenvalue branch $\Lambda$ over
$\Omega_N^{\mathbb R,+}$ is decreasing in every $(1/f_n)$-direction with
$0 \leq n \leq N-1$, independent of $f_N$, and increasing in every
$q_n$-direction; while the positive parts of $\Lambda$ are
decreasing in every $w_n$-direction, and the negative parts of
$\Lambda$ are increasing in every $w_n$-direction.\end{thm}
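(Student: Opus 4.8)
The plan is to read all five monotonicity statements directly off the derivative formula \eqref{418} of Theorem \ref{th47}, handling points at which a continuous eigenvalue branch has multiplicity $2$ (and hence may fail to be differentiable) exactly as in the proof of Theorem \ref{th46}. The independence of $\Lambda$ on $f_N$ is the easiest part and needs no derivative at all: by the recursion \eqref{33} the transfer matrix $\Phi_N(\lambda)$ is assembled only from $1/f_0,\dots,1/f_{N-1}$, so by Lemma \ref{l32} the characteristic function $\Gamma$, and with it the set of eigenvalues and every continuous eigenvalue branch, is independent of $f_N$.

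For the other directions, fix a self-adjoint BC $\mathbf A$, a continuous eigenvalue branch $\Lambda$ over $\Omega_N^{\mathbb R,+}$, and all coordinates of $\pmb\omega$ but one, say $q_n$ (the $(1/f_n)$- and $w_n$-cases being handled identically), so that $\Lambda$ restricts to a continuous real-valued function of $t:=q_n$ on an interval $I$. Because $\mathbf A$ is fixed, $\Gamma_{(\pmb\omega_t,\mathbf A)}(\lambda)$ is a polynomial in $(t,\lambda)$, so its $\lambda$-discriminant is a polynomial in $t$; supposing first that it is not identically zero, $\Gamma_{(\pmb\omega_t,\mathbf A)}$ has a repeated root for only finitely many $t\in I$, and since analytic and geometric multiplicities coincide in the self-adjoint case (Theorem \ref{th33}), $\Lambda(t)$ is a simple eigenvalue for all $t$ outside this finite set. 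There $\Lambda$ agrees locally with the continuous simple eigenvalue branch through $\Lambda(t)$, so Theorems \ref{th42} and \ref{th47} apply and yield, with $y$ a normalized eigenfunction for $\Lambda(t)$, $\Lambda'(t)=-|f_n\Delta y_n|^2$ in the $(1/f_n)$-direction ($0\le n\le N-1$), $\Lambda'(t)=|y_n|^2$ in the $q_n$-direction, and $\Lambda'(t)=-\Lambda(t)\,|y_n|^2$ in the $w_n$-direction. The signs of these expressions give the stated monotonicity on the dense open complement of the finite set — for $w_n$, separately on the intervals where $\Lambda$ is positive and where it is negative — and then the continuity of $\Lambda$ transports the monotonicity across the finitely many exceptional points, just as in the proof of Theorem \ref{th46}.

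There remains the degenerate possibility that the $\lambda$-discriminant of $\Gamma_{(\pmb\omega_t,\mathbf A)}$ vanishes identically in $t$; I would dispose of it by the ``latter case'' device of the proof of Theorem \ref{th46}. By Theorems \ref{th31} and \ref{th33}, a double eigenvalue for the fixed BC $\mathbf A$ forces $\mathbf A=[\Phi_N(\lambda)\,|\,-I]$ with $\Phi_N(\lambda)$ a real matrix, so the full discriminant of $\Gamma$, viewed as a polynomial in all the coordinates of $\pmb\omega$, is not identically zero; hence an arbitrarily small perturbation of one of the other (fixed) coordinates returns us to the non-degenerate situation above, the monotonicity in the $q_n$- (resp. $(1/f_n)$-, $w_n$-) direction holds for every such perturbed problem, and letting the perturbation tend to $0$ and invoking the continuity of $\Lambda$ in all variables gives it for the original problem. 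I expect this management of the multiplicity-$2$ locus — the discriminant argument together with the perturbation-and-limit step — to be the only real obstacle; wherever the branch is simple, the theorem is nothing more than an inspection of \eqref{418}.
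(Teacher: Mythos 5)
Your proposal is correct and follows essentially the route the paper intends: the paper omits this proof entirely, saying only that it is ``similar to that of Theorem \ref{th46}'', and you carry out exactly that program --- read the signs off the derivative formula \eqref{418} of Theorem \ref{th47} where the branch is simple, note that $f_N$ never enters $\Phi_N(\lambda)$ via \eqref{33}, and transport the monotonicity across the multiplicity-$2$ locus by continuity, with a perturbation in a fixed coordinate for the fully degenerate case. Your use of the $\lambda$-discriminant of the polynomial $\Gamma$ to show the double-eigenvalue set is finite is a harmless (and arguably cleaner) substitute for the paper's ``two real-analytic curves either meet discretely or coincide'' argument in Theorem \ref{th46}.
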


\begin{proof} The proof is similar to that of Theorem \ref{th46} and
hence omitted.\end{proof}

\begin{rem}\label{r43} The monotonicity in Theorem \ref{th48} is not necessarily strict.
Please see Examples \ref{e55}-\ref{e57} for illustration.\end{rem}

\section{ Examples}

In this section, we shall give some examples to illustrate some results obtained in
Sections 3 and 4.

Consider the modified discrete Fourier equation,
i.e., the discrete SLE \eqref{11} with
$$
N=2, \; f_0 =f_1 =1, \; q_1 =q_2 =0, \; w_1 =1.
$$
From \eqref{31} and \eqref{32} we deduce that
\begin{equation}\label{51}
\Phi_0(\lambda) =\begin{pmatrix} 1 & 0\\0 & 1
\end{pmatrix},
\Phi_1(\lambda) =\begin{pmatrix} 1 & 1 \\ -\lambda & 1-\lambda
\end{pmatrix},
$$
$$
\Phi_2(\lambda) = \begin{pmatrix}1-\lambda &
2-\lambda\\ -(1+w_2)\lambda+w_2\lambda^2 & 1-(1+2w_2)\lambda+w_2\lambda^2
\end{pmatrix}.
\end{equation}
Further, when $\omega _2=1$, we have that
\begin{equation}\label{52}
\Phi_2(\lambda)
=\begin{pmatrix} 1-\lambda & 2-\lambda
       \\ -2\lambda+\lambda^2 & 1-3\lambda+\lambda^2 \end{pmatrix}.
\end{equation}

We first give two examples to show that the analytic and geometric multiplicities of an eigenvalue are not necessarily equal for a discrete SLP, which
is not self-adjoint.

\begin{ex}\label{e51}
 Consider the  modified discrete Fourier equation with $\omega_2=1$, and let $c \in \mathbb C$. Then, by using Lemma \ref{l33} and \eqref{52},  the characteristic function for the separated BC
$$
\mathbf A(c): =\left[\begin{array}{cccc} c & 2c+1 & 0 & 0  \\ 0 & 0 & c & 1 \end{array}\right]
$$
is
$$
\Gamma(\lambda) =(c^2+2c+2) \lambda -(c+1) \lambda^2.
$$
Thus, 0 is an eigenvalue for $\mathbf A(-1\pm i)$ with geometric
multiplicity 1 and analytic multiplicity 2. So,  the
analytic and geometric multiplicities of an eigenvalue are not
equal in general. Note that the BCs $\mathbf A(-1\pm i)$ are not
self-adjoint.\end{ex}

\begin{ex}\label{e52} Take the discrete SLP consisting of the modified
discrete Fourier equation and the separated BC
$$
\left[\begin{array}{cccc} \cos\alpha & 2\cos\alpha-\sin\alpha &          0 &           0 \\
                  0 &                      0 & \cos\alpha & -\sin\alpha \end{array}\right],
$$
where $\alpha \in [0,\pi)$. By using Lemma
\ref{l33} and \eqref{51}, direct calculations yield that the characteristic function of
the problem is
$$
\Gamma(\lambda) =(1-\sin(2\alpha)+w_2\sin^2\alpha) \lambda
                    +(\cos\alpha-\sin\alpha) w_2 \sin\alpha \cdot \lambda^2.
$$
Thus, if
\begin{equation}\label{53}
\alpha \in (0,\pi/4) \cup (\pi/4,\pi), \; w_2 =(\sin(2\alpha)-1)
/\sin^2\alpha,
\end{equation}
then 0 is an eigenvalue with geometric multiplicity 1 and analytic
multiplicity 2. In this case, the weight function $w$ is indefinite
since $w_2 <0$ by \eqref{53}.
\end{ex}

The following examples below show us continuous eigenvalue branches in different cases.

\begin{ex}\label{e53} Consider the discrete Fourier equation with $\omega_2=1$,
and let $\alpha \in \mathbb R$. Then, from Lemma \ref{l33}
and \eqref{52}, we see that the characteristic function for the BC
$\mathbf S_{\alpha,\pi}$ (see \eqref{25}) is
$$
\Gamma(\lambda) =-(1-\lambda)\sin\alpha -(2-\lambda) \cos\alpha
=-2\cos\alpha -\sin\alpha +(\cos\alpha+\sin\alpha) \lambda.
$$
Thus, the self-adjoint discrete SLP consisting of the discrete
Fourier equation and $\mathbf S_{3\pi/4,\pi}$ has no
eigenvalues; and when $\alpha \in [0,3\pi/4) \cup (3\pi/4,\pi)$,
the only eigenvalue for $\mathbf S_{\alpha,\pi}$ is
\begin{equation}\label{54}
\lambda_1 =(2\cos\alpha+\sin\alpha)/ (\cos\alpha+\sin\alpha).
\end{equation}
So, in this case, there is only one continuous eigenvalue branch
over $\big\{ \mathbf S_{\alpha,\pi}: \ \alpha \in [0,3\pi/4) \cup (3\pi/4,\pi) \big\}$, i.e., the
function given by \eqref{54} for $\alpha \in [0,3\pi/4) \cup (3\pi/4,\pi)$. See
Figure 5.1.

\begin{center}

\includegraphics[width=75mm]{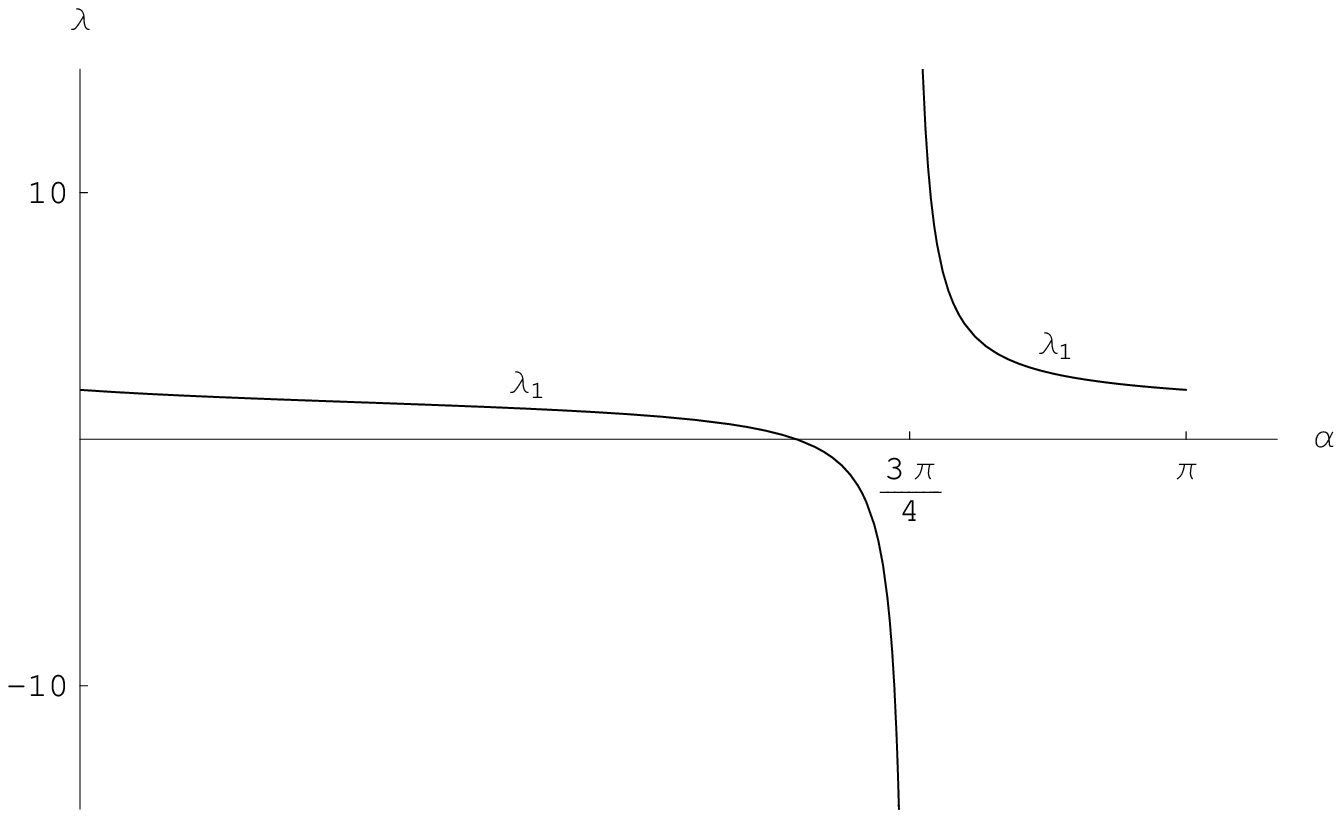}

{\small {\bf Figure 5.1.} Only one continuous eigenvalue branch}

\end{center}
\vspace{0.2cm}
\rm Similarly, the characteristic function for the BC $\mathbf
S_{\alpha,\pi/2}$ is $$
\Gamma(\lambda) =-\cos\alpha +(3\cos\alpha+2\sin\alpha) \lambda
-(\cos\alpha+\sin\alpha) \lambda^2.
$$
Thus, the only eigenvalue for $\mathbf S_{3\pi/4,\pi/2}$ is $\lambda_1
=1$; and when $\alpha \in [0,3\pi/4) \cup (3\pi/4,\pi)$, the two
eigenvalues for $\mathbf S_{\alpha,\pi/2}$ are
$$
\lambda_1(\alpha) =\begin{cases} \lambda_-(\alpha) & \text{ if }
\alpha \in [0,3\pi/4),
     \\ \lambda_+(\alpha) & \text{ if } \alpha \in (3\pi/4,\pi),
     \end{cases}
\qquad \lambda_2(\alpha) =\begin{cases} \lambda_+(\alpha) & \text{
if } \alpha \in [0,3\pi/4),
     \\ \lambda_-(\alpha) & \text{ if } \alpha \in (3\pi/4,\pi),
     \end{cases}
$$
$$
\lambda_{\pm}(\alpha) ={3\cos\alpha+2\sin\alpha \pm
\sqrt{\cos^2\alpha+4\sin(2\alpha)+4} \over
  2(\cos\alpha+\sin\alpha) }.
$$
So, in this case, each continuous eigenvalue branch over $\big\{
\pmb S_{\alpha,\pi/2}: \ \alpha \in [0,\pi) \big\}$ is locally a part of one
of the following functions:
$$\begin{array}{llll}
\Lambda_1(\alpha) & =&
    \lambda_1(\alpha) \text{ \ for } \alpha \in [0,3\pi/4),\\
\Lambda_{2,1}(\alpha) & =& \begin{cases} \lambda_2(\alpha) &
\text{ if } \alpha \in [0,3\pi/4),
    \\ 1 & \text{ if } \alpha =3\pi/4,
    \\ \lambda_1(\alpha) & \text{ if } \alpha \in (3\pi/4,\pi),
    \end{cases}\\
\Lambda_2(\alpha) & = &\lambda_2(\alpha) \text{ \ for } \alpha \in
(3\pi/4,\pi),
\end{array}$$

See Figure 5.2. Note that for each $\mathbf S_{\alpha,\pi/2}$,
there are one or two continuous eigenvalue branches defined on a
neighborhood of $\mathbf S_{\alpha,\pi/2}$ in $\big\{ \mathbf S_{\alpha,\pi/2}:
\ \alpha\in [0,\pi) \big\}$. This example demonstrates that the
index of the eigenvalue in a continuous eigenvalue branch over
$\Omega_N^{\mathbb R,+} \times \mathcal B^{\mathbb C}$ can change as the
problem varies. \begin{center}

\includegraphics[width=75mm]{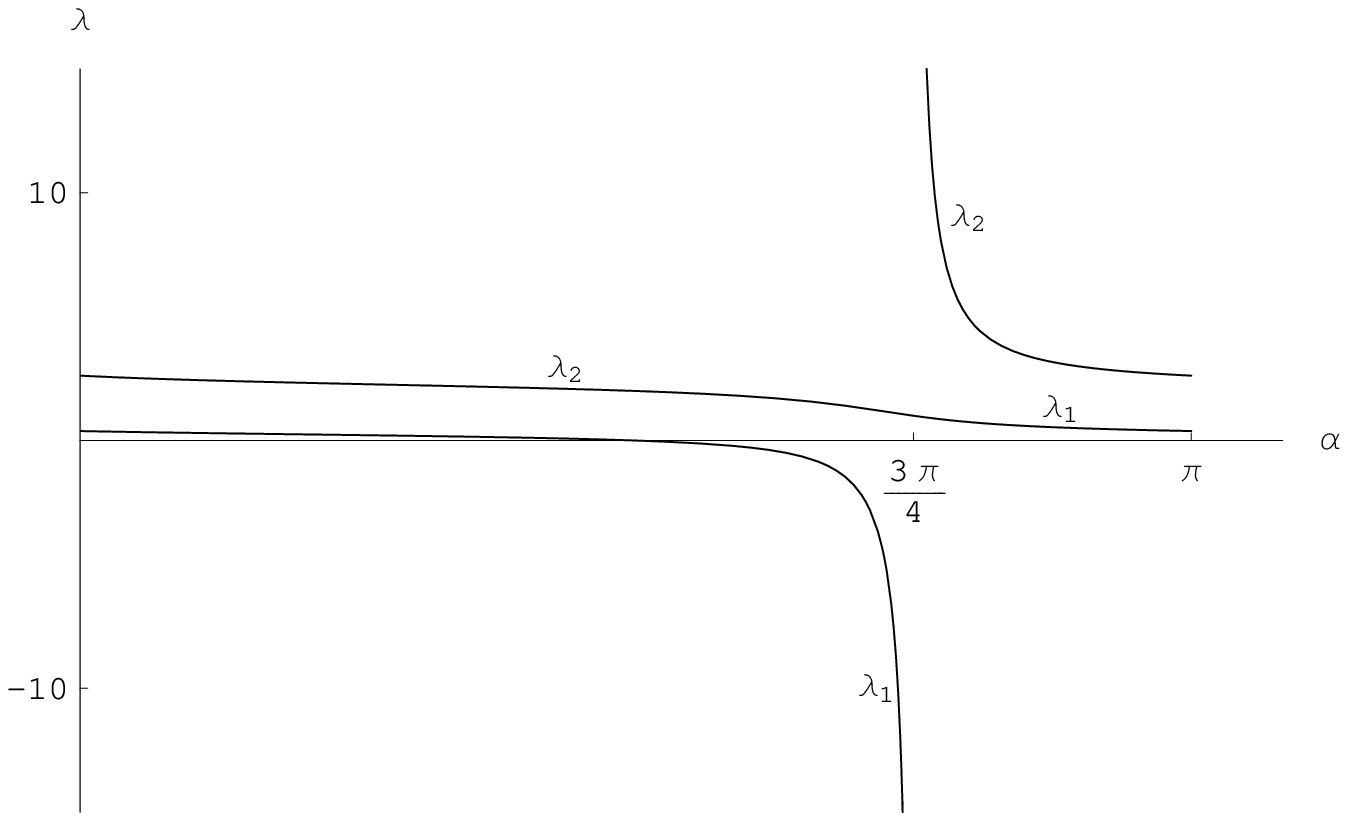}

{\small {\bf Figure 5.2.} One or two continuous eigenvalue branches}
\end{center}
\vspace{0.2cm}

Note that Figures 5.1 and 5.2 also agree with the strict
monotonicity of continuous eigenvalue branches in the
$\alpha$-direction given in Theorem \ref{th44}.\end{ex}

The next example shows that the monotonicity in Theorem \ref{th46} is not necessarily strict.

\begin{ex}\label{e54} Consider the discrete Fourier equation with  $\omega_2=1$.
Let $a_{12} >1$ and $b_{21} \in \mathbb R$. Then, the
characteristic function for the BC
$$
\mathbf A(a_{12},b_{21}): =\left[\begin{array}{cccc} 1 & a_{12} &     -1 & 0 \\
                                 0 &     -1 & b_{21} & 1 \end{array}\right]
\in \mathcal O_{1,4}^{\mathbb C}
$$
is
$$
\Gamma(\lambda) =-(a_{12}-2)b_{21}
+[(a_{12}-1)b_{21}+2(a_{12}-2)] \lambda
                    -(a_{12}-1) \lambda^2.
$$
Thus, the two eigenvalues for $\mathbf A(a_{12},b_{21})$ are
\begin{equation}\label{55}
\lambda_1(a_{12},b_{21}) =
{(a_{12}-1)b_{21}+2(a_{12}-2)-\delta^{1 \over 2}(a_{12},b_{21}) \over
2(a_{12}-1)},
\end{equation}
\begin{equation}\label{56}
\lambda_2(a_{12},b_{21})  =
{(a_{12}-1)b_{21}+2(a_{12}-2)+\delta^{1 \over 2}(a_{12},b_{21}) \over
2(a_{12}-1)},
\end{equation}
where $$
\delta(a_{12},b_{21}) =(a_{12}-1)^2b_{21}^2+4(a_{12}-2)^2.
$$
Let $\Lambda_1(a_{12},b_{21})=\lambda_1(a_{12},b_{21})$ and
 $\Lambda_2(a_{12},b_{21})=\lambda_2(a_{12},b_{21})$.
These are the two continuous eigenvalue branches over
$$
\big\{ \mathbf A(a_{12},b_{21}): \ a_{12}>1, \, b_{21} \in \mathbb R \big\}.
$$

Let $R =(1,+\infty) \times \mathbb R$. Since $\delta(a_{12},b_{21})>0$ for each $(a_{12},b_{21})\in R \setminus \{ (2,0) \}$, then $\lambda_1(a_{12},b_{21})$ and $\lambda_2(a_{12},b_{21})$
are two different and simple eigenvalues in this case. By Lemma \ref{l42} and Theorem \ref{th42}, $\Lambda_1$ and
$\Lambda_2$ are the only two different $C^{\infty}$ eigenvalue branches on
$R \setminus \{ (2,0) \}$.

On the other hand, setting $b_{21}=0$, from \eqref{55} and \eqref{56} we deduce that
$$\begin{array}{ll}\vspace{0.2cm}
\lambda_1(a_{12},0)  =
\begin{cases}                      0 & \text{ if } a_{12}>2, \\
       2(a_{12}-2)/(a_{12}-1) & \text{ if } 1<a_{12}<2, \end{cases}
\\
\lambda_2(a_{12},0)  =
\begin{cases} 2(a_{12}-2)/(a_{12}-1) & \text{ if } a_{12}>2, \\
                            0 & \text{ if } 1<a_{12}<2. \end{cases}
\end{array}$$
See Figure 5.3. Therefore, the two continuous eigenvalue
branches $\lambda_1(a_{12},0)$ and $\lambda_2(a_{12},0)$
are not differentiable at $a_{12} =2$. Note that $\lambda=0$
is the eigenvalue of multiplicity 2 of the problem when $a_{12}=2$ and $b_{21}=0$. This
demonstrates that the multiplicity assumptions in
Theorems \ref{th41} and \ref{th42} can not be omitted in general (see Remark \ref{r41}).
\begin{center}

\includegraphics[width=75mm]{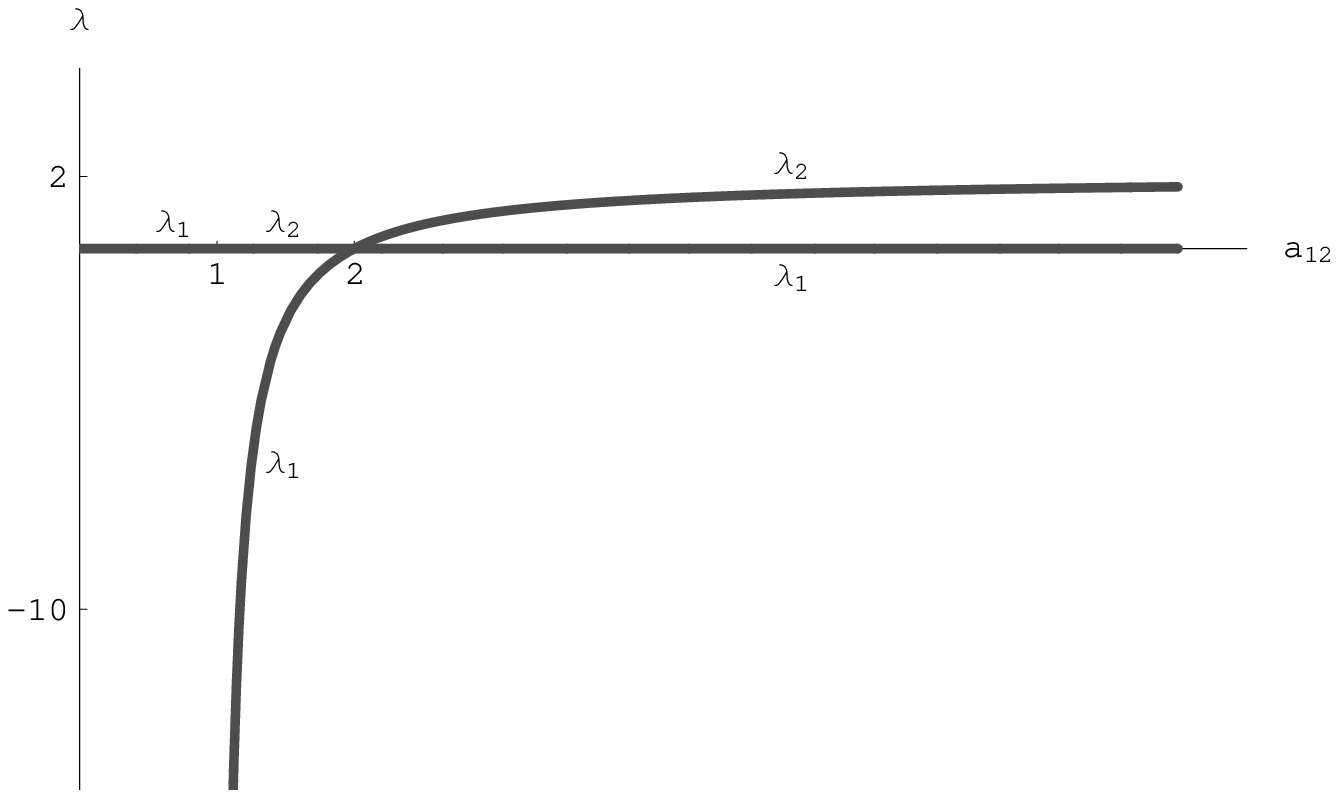}

{\small {\bf Figure 5.3.} One or two continuous eigenvalue branches, with horizontal parts}
\end{center}
\vspace{0.2cm}

Figure 5.3 also illustrates the monotonicity of continuous
eigenvalue branches in the $a_{12}$-direction in $\mathcal
O_{1,4}^{\mathbb C}$. Moreover, this example shows that the
monotonicity is not strict in general. \end{ex}

Finally, we give three examples to show different continuous eigenvalue branches on different subsets of $\Omega_{N}^{\mathbb R,+}$ of the discrete
self-adjoint SLP, separately.

\begin{ex}\label{e55} Let $s<0$. Consider the 1-parameter family of
self-adjoint discrete SLPs consisting of the discrete SLEs with
$$
f_0 =s, \ f_1 =1,\ f_2=1, \; q_1 =q_2 =0, \; w_1 =w_2 =1,\; N=2,
$$
and the BC
\begin{equation}\label{57}
\mathbf A =\left[\begin{array}{cccc} 1 & -1 & 0 & 1 \\ 0 & 1 & -1 & 0 \end{array}\right]
       \in \mathcal O_{1,3}^{\mathbb C}.
\end{equation}
Then, by Lemma \ref{l33}, direct calculations deduce that the
characteristic function is
$$
\Gamma(\lambda) =(-1+\lambda) (1+s\lambda)/s.
$$
Thus, the two continuous eigenvalue branches are
$$
\lambda_1(s) =\begin{cases} -1/s & \text{ if } s\le -1, \\
                     1 & \text{ if } -1 <s <0, \end{cases} \qquad
\lambda_2(s) =\begin{cases} 1 & \text{ if } s\le -1, \\
                     -1/s & \text{ if } -1 <s <0. \end{cases}
$$
See Figure 5.4. Therefore, in general, continuous
eigenvalue branches are not differentiable with respect to $f_n$,
and their monotonicity with respect to $f_n$ is not strict.

\begin{center}

\includegraphics[width=75mm]{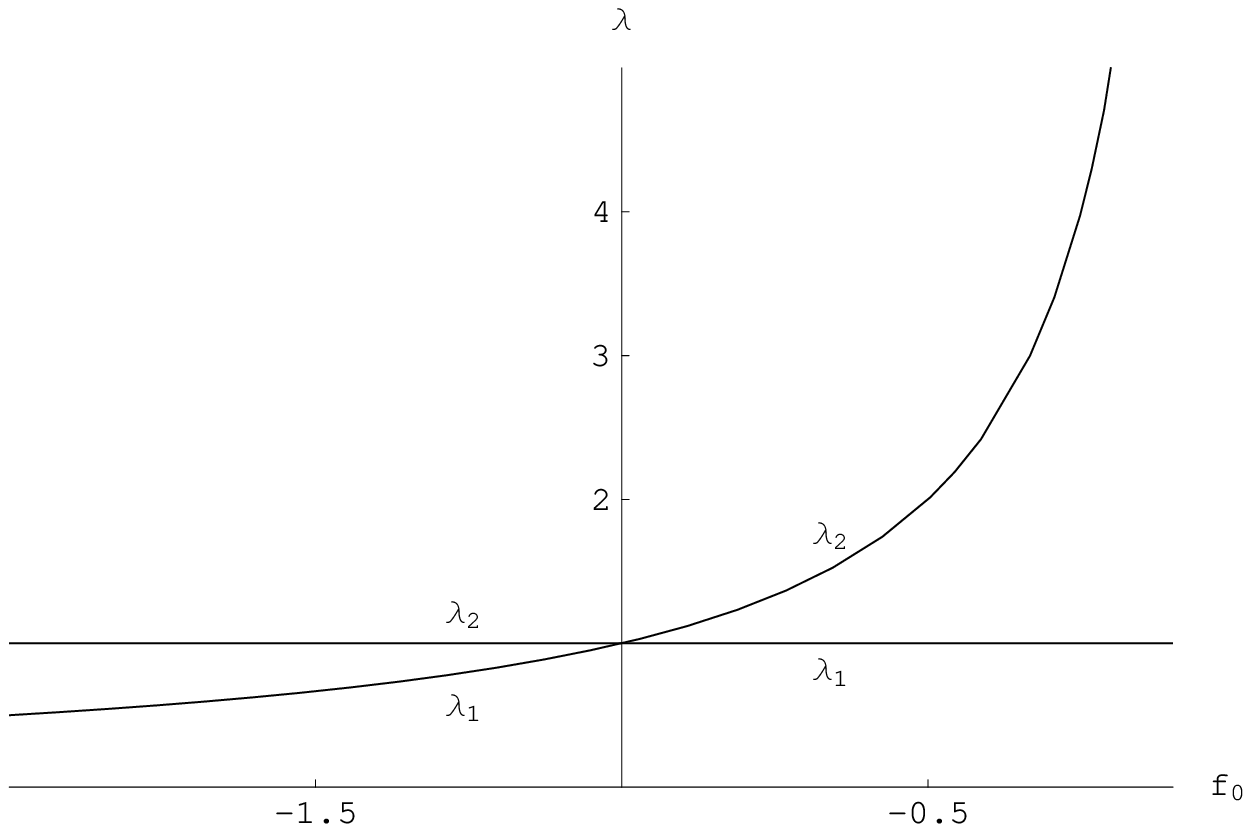}

{\small {\bf Figure 5.4.} Continuous eigenvalue branches are increasing in $f_0$-direction}
\end{center}

\end{ex}

\begin{ex}\label{e56}
 Let $s \in \mathbb R$. Take the 1-parameter family of
self-adjoint discrete SLPs consisting of the discrete SLEs with
$$
f_0 =-1, \ f_1 =1,\ f_2=1, \;\;\;\;q_1 =s, \ q_2 =0, \;\;\;\;w_1 =w_2 =1,\;\;\;\;N=2,
$$
and the BC given in \eqref{57}. Then, the characteristic function is
$$
\Gamma(\lambda) =(1-\lambda) (1+s-\lambda).
$$
Thus, the two continuous eigenvalue branches are
$$
\lambda_1(s) =\begin{cases} 1+s & \text{ if } s \le 0, \\
                     1 & \text{ if } s >0, \end{cases} \qquad
\lambda_2(s) =\begin{cases} 1 & \text{ if } s \le 0, \\
                     1+s & \text{ if } s >0. \end{cases}
$$
See Figure 5.5. Therefore,  in general, continuous
eigenvalue branches are not differentiable with respect to $q_n$,
and their monotonicity with respect to $q_n$ is not strict.

\begin{center}

\includegraphics[width=75mm]{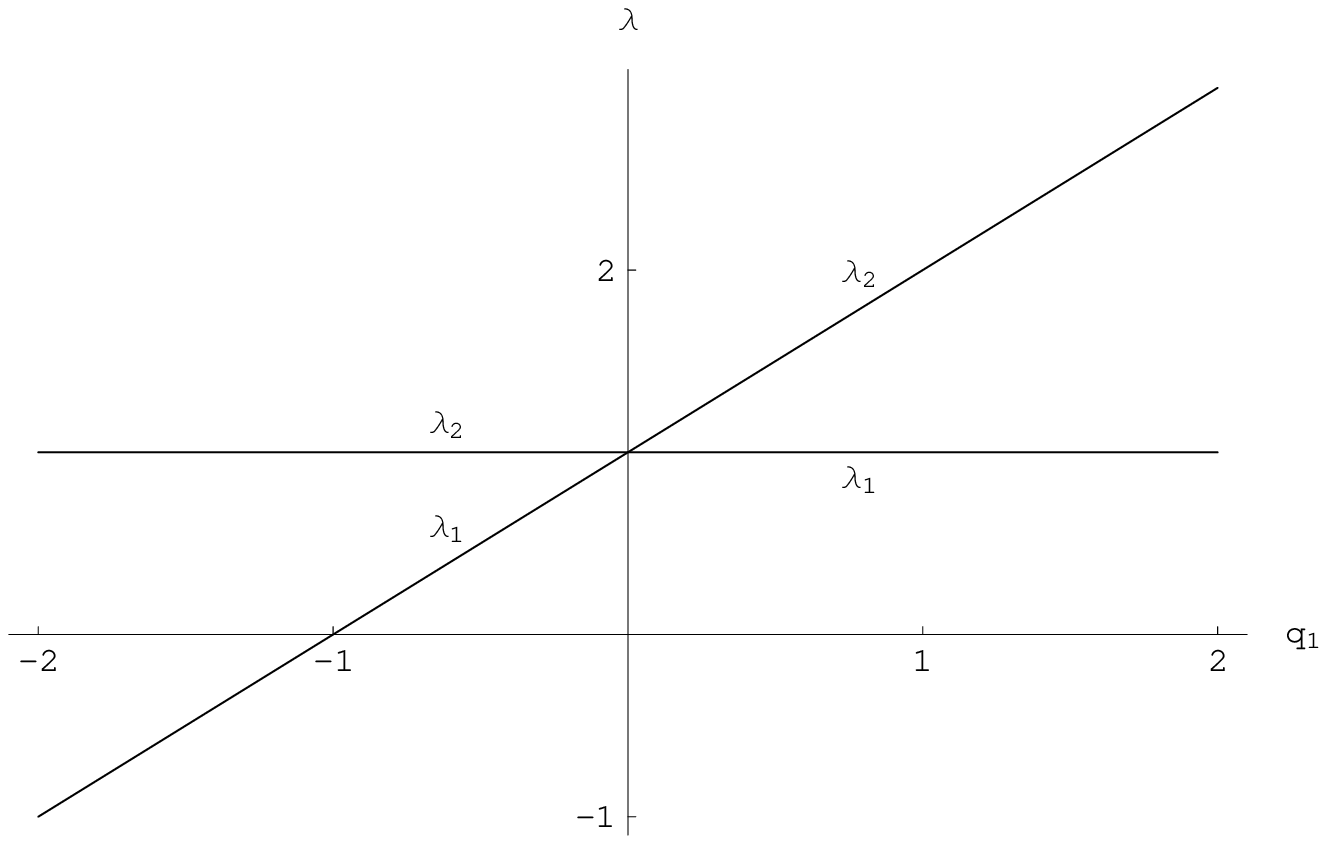}

{\small {\bf Figure 5.5.} Continuous eigenvalue branches are increasing in $q_1$-direction}
\end{center}

\end{ex}

\begin{ex}\label{e57} Let $s >0$. Consider the 1-parameter family of
self-adjoint discrete SLPs consisting of the discrete SLEs with\vspace{-0.01cm}
$$\vspace{-0.01cm}
f_0 =-1, \ f_1 =1,\ f_2=1, \;\;\;\; q_1 =q_2 =0,\;\;\;\; w_1 =s, \ w_2 =1,\;\;\;\; N=2,
$$
and the BC given in \eqref{57}. Then, the characteristic function is
$$
\Gamma(\lambda) =(1-\lambda) (1-s\lambda).
$$
Thus, the two continuous eigenvalue branches are
$$
\lambda_1(s) =\begin{cases} 1 & \text{ if } 0 <s \leq 1, \\
                     1/s & \text{ if } s >1, \end{cases} \qquad
\lambda_2(s) =\begin{cases} 1/s & \text{ if } 0 <s \leq 1, \\
                     1 & \text{ if } s >1. \end{cases}
$$
See Figure 5.6. Therefore, in general, continuous
eigenvalue branches are not differentiable with respect to $w_n$,
and the monotonicity of their positive parts with respect to $w_n$
is not strict.

\begin{center}
\includegraphics[width=75mm]{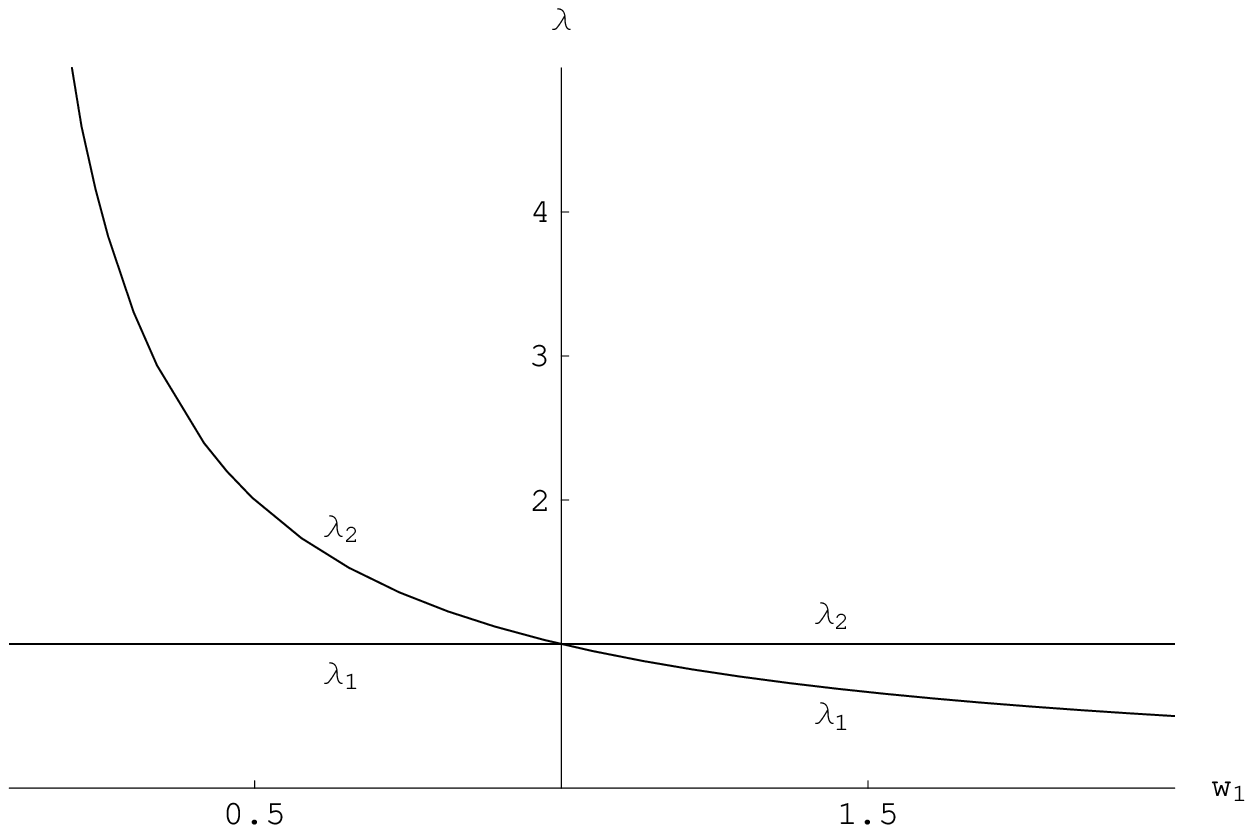}
\end{center}
{\small {\bf Figure 5.6.} Positive parts of continuous eigenvalue branches are decreasing in $w_1$-direction}

\end{ex}

\end{document}